\theoremstyle{plain}
\newtheorem{thm}{Theorem}[section]
\newtheorem{theorem}[thm]{Theorem}
\newtheorem{lemma}[thm]{Lemma}
\newtheorem{corollary}[thm]{Corollary}
\newtheorem{proposition}[thm]{Proposition}
\theoremstyle{definition}
\newtheorem{question}[thm]{Question}
\numberwithin{equation}{section}
\title [Smooth quartic K3 surfaces]{Smooth quartic K3 surfaces 
and Cremona transformations, II}
\author{Keiji Oguiso}
\address{Keiji Oguiso, Department of Mathematics, Osaka University\\
Toyonaka 560-0043 Osaka, Japan and  Korea Institute for Advanced Study, Hoegiro 87, Seoul, 130-722, Korea} \email{oguiso@math.sci.osaka-u.ac.jp}
\thanks{supported by JSPS Gran-in-Aid (B) No 22340009, JSPS Grant-in-Aid (S), No 22224001, and by KIAS Scholar Program}
\begin{document}

\maketitle

\begin{abstract}
This is a continuation of [Og12], concerning automorphisms of smooth quartic K3 surfaces and birational automorphisms of ambient projective three spaces. 
\end{abstract}

\section{Introduction}

Thoughout this note we work over $\mathbf C$. This note is a continuation of our previous paper \cite{Og12}. Our main results are Theorems (\ref{main1}), (\ref{main2}) below. In \cite{Og12}, being based on a result of Takahashi (\cite{Ta98}, see also Theorem (\ref{takahashi})), we gave the first {\it negative} answer to the following long standing question of Gizatullin (\cite{Do11}): 
\begin{question}\label{gizatullin}
Let $S \subset {\mathbf P}^3$ be a smooth quartic K3 surface and $g \in {\rm Aut}\, (S)$ as abstarct variety of $X$. Is $g$ derived from ${\rm Bir}\, ({\mathbf P}^3)$, the group of birational automorphisms of ${\mathbf P}^3$, in the sense 
that there is $\tilde{g} \in {\rm Bir}\, ({\mathbf P}^3)$ such that 
$\tilde{g}_* S = S$ and $\tilde{g} \vert S = g$?
\end{question}
The counter example in \cite{Og12} is of Picard number $2$ and $g$ is of infinite order. Construction there is lattice theoretic and in this sense implicit. Here and hereafter, a {\it K3 surface} is a smooth projective simply connected surface $S$ with nowhere vanishing global holomorphic $2$-form $\sigma_S$. Recall that ${\rm Aut}\, (S)$ can be an infinite group and can admit infinitely many different embeddings into ${\mathbf P}^3$, up to ${\rm Aut}\, ({\mathbf P}^3)$, or equivalently, infinitely many different linearly equivalent classes of very ample divisors $D$ with $(D^2)_S = 4$. The Fermat quartic surface is such an example. Keeping these in mind, one can naturally ask more interesting questions now than the original question. For instance: 

\begin{question}\label{gizatullin2}
Can one describe at least one counter example $(S, g)$ of Question (\ref{gizatullin}) explicitly in terms of homogeneous coordinates of ${\mathbf P}^3$? 
\end{question}

\begin{question}\label{gizatullin3}
Is there a smooth quartic K3 surface $S \subset {\mathbf P}^3$ such that 
${\rm Aut}\, (S)$ is enough complicated but every element of ${\rm Aut}\, (S)$ is derived from  ${\rm Bir}\, ({\mathbf P}^3)$?  
\end{question}

\begin{question}\label{gizatullin4}
Is every automorphism of the Fermat quartic surface $S \subset {\mathbf P}^3$ derived from ${\rm Bir}\, ({\mathbf P}^3)$?
\end{question}

\begin{question}\label{gizatullin5}
Is every automorphism of finite order of any smooth quartic surface $S \subset {\mathbf P}^3$ derived from ${\rm Bir}\, ({\mathbf P}^3)$?
\end{question}

\begin{question}\label{gizatullin6}
Are there a smooth K3 surface $S$ and $g \in {\rm Aut}\, (S)$ such that $g$ 
is not derived from ${\rm Bir}\, ({\mathbf P}^3)$ in any embedding $\Phi : S \to {\mathbf P}^3$?
\end{question}

The aim of this note is to give affrimative answers to Questions (\ref{gizatullin3}), (\ref{gizatullin6}) in stronger forms as well as an example supporting Question (\ref{gizatullin5}). Questions (\ref{gizatullin2}), (\ref{gizatullin4}) are completely open and Question (\ref{gizatullin5}) is also mostly open  
(see however \cite{Pr12}). 

Our main results are the following:

\begin{theorem}\label{main1}
There is a smooth quartic K3 surface $S \subset {\mathbf P}^3$ of Picard number $3$ such that
\begin{enumerate}
\item ${\rm Aut}\, (S) \simeq {\mathbf Z}_2 * {\mathbf Z}_2 * {\mathbf Z}_2$,  
the free product of three cyclic groups of order $2$. 
\item Every element of ${\rm Aut}\, (S)$ is derived from  ${\rm Bir}\, ({\mathbf P}^3)$ but no element other than $id_S$ is derived from ${\rm Aut}\, ({\mathbf P}^3) = {\rm PGL}\, (4, {\mathbf C})$. 
\item $S$ admits infinitely many different embeddings $S \to {\mathbf P}^3$, up to ${\rm Aut}\, ({\mathbf P}^3)$, and ${\rm Aut}\, (S)$ is derived from ${\rm Bir}\, ({\mathbf P}^3)$ in any embedding $S \to {\mathbf P}^3$. 
\end{enumerate}
\end{theorem}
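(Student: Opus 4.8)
The plan is to realise $S$ lattice-theoretically, using the global Torelli theorem together with surjectivity of the period map, and then to read off both the abstract structure of ${\rm Aut}\,(S)$ and its relation to ${\rm Bir}\,({\mathbf P}^3)$ from the geometry of the positive cone. Concretely, I would fix a specific even lattice $L$ of signature $(1,2)$ containing a vector $h$ with $h^2 = 4$, chosen so that (i) $L$ represents no $(-2)$, and (ii) the positive-cone-preserving isometry group $O^+(L)$ is generated by three involutions in ideal-triangle position. By surjectivity of the period map one then produces a K3 surface $S$ with ${\rm NS}(S) \cong L$ whose transcendental lattice $T_S$ admits only $\pm 1$ as Hodge isometries. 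This genericity, via Torelli, identifies ${\rm Aut}\,(S)$ with the group of $g \in O^+(L)$ preserving the ample cone that additionally glue to a Hodge isometry of $T_S$, i.e. that act by $\pm 1$ on the discriminant group $A_L = L^*/L$; part of choosing $L$ is to arrange that this last condition is automatic for all of $O^+(L)$.

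\emph{Key structural step (part (1)).} Because $L$ represents no $(-2)$, there are no $(-2)$-curves on $S$, the ample cone coincides with the full positive cone, and hence ${\rm Aut}\,(S) \cong O^+(L)$. I would then view $O^+(L)$ as acting on the hyperbolic plane $\BH^2$ cut out by the positive cone. The heart of the matter is to exhibit three vectors $v_1, v_2, v_3 \in L$ of fixed negative even norm (for which each reflection $s_{v_i}$ is an integral isometry preserving the positive cone) whose mirror geodesics bound an ideal triangle, i.e. a triangle all of whose vertices lie on the boundary at infinity. The reflection group of such a triangle has presentation $\langle s_1, s_2, s_3 \mid s_i^2 = 1\rangle \cong {\mathbf Z}_2 * {\mathbf Z}_2 * {\mathbf Z}_2$, with no further relations precisely because each product $s_i s_j$ has infinite order (parabolic or hyperbolic on $\BH^2$), never elliptic. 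A Vinberg-style fundamental-domain argument in rank $3$ should then show that these three reflections already exhaust $O^+(L)$, with the ideal triangle as fundamental polygon carrying no extra symmetry realised inside $O^+(L)$, which yields the isomorphism in (1).

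\emph{Quartic realisation and the Cremona statement (part (2)).} I must verify that $h$, with $h^2 = 4$, is very ample: with no $(-2)$-curves present and no elliptic class $E$ with $E\cdot h = 2$, Saint-Donat's criteria apply and $|h|$ embeds $S$ as a smooth quartic. To see that each generator $g_i$ induced by $s_{v_i}$ is derived from ${\rm Bir}\,({\mathbf P}^3)$, I would show that $g_i^* h$ is again a quartic polarisation reachable from $h$ by an explicit projection/Cremona construction, and invoke Takahashi's criterion (Theorem (\ref{takahashi})) exactly as in \cite{Og12}; since the $g_i$ generate ${\rm Aut}\,(S)$, every element is then derived from ${\rm Bir}\,({\mathbf P}^3)$. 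For the negative half of (2), an element coming from ${\rm PGL}\,(4,{\mathbf C})$ must fix the hyperplane class $h$, so it suffices to arrange $h$ to lie in the interior of the fundamental ideal triangle, off every mirror; then its stabiliser in ${\mathbf Z}_2 * {\mathbf Z}_2 * {\mathbf Z}_2$ is trivial and only $id_S$ descends from ${\rm PGL}\,(4,{\mathbf C})$.

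\emph{Infinitely many embeddings (part (3)), and the main obstacle.} Since ${\rm Aut}\,(S)$ is infinite and fixes no point of $\BH^2$ attached to $h$, the orbit ${\rm Aut}\,(S)\cdot h$ is infinite; each $g^* h$ has square $4$ and, no $(-2)$-curve being present, is again very ample, giving a quartic model of $S$. Two such classes lie in one ${\rm PGL}\,(4,{\mathbf C})$-orbit only if equal, so $S$ carries infinitely many quartic embeddings, and reapplying the argument of (2) to every polarisation in the orbit shows ${\rm Aut}\,(S)$ is derived from ${\rm Bir}\,({\mathbf P}^3)$ in each. I expect the genuine difficulty to be twofold. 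First, pinning down a single lattice $L$ that simultaneously represents no $(-2)$, contains a square-$4$ vector off all mirrors, and has $O^+(L)$ equal to the ideal-triangle group acting as $\pm 1$ on $A_L$: this is a delicate Diophantine and reflection-group computation, the rank-$3$ analogue of running Vinberg's algorithm. Second, the geometric verification that each abstract lattice reflection $s_{v_i}$ is genuinely induced by a Cremona transformation of ${\mathbf P}^3$, rather than merely an abstract Hodge isometry; this is where Takahashi's theorem must be applied with care to the specific classes $v_i$ and $h$.
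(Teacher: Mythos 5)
There is a genuine gap, and it sits exactly at the heart of part (2). Takahashi's result (Theorem (\ref{takahashi})) is a \emph{negative} criterion: under its hypothesis on $S$, every $g \in {\rm Bir}\,({\mathbf P}^3) \setminus {\rm Aut}\,({\mathbf P}^3)$ satisfies $g_*S \neq S$, so it can only ever be used to show that automorphisms are \emph{not} derived from ${\rm Bir}\,({\mathbf P}^3)$ --- this is precisely how it is used in \cite{Og12} and in the proof of Theorem (\ref{main2}). Invoking it ``exactly as in \cite{Og12}'' therefore proves the opposite of what you need; you are left with no mechanism at all for producing Cremona transformations that restrict to the generators of ${\rm Aut}\,(S)$. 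Worse, your insistence that ${\rm NS}\,(S)$ represent no $(-2)$ actively destroys the only mechanism available: the paper's surface is built to contain two skew \emph{lines} $L, M$ (which are $(-2)$-curves), and projection from each line exhibits $S$ as an elliptic fibration $\Phi_{\vert H - L\vert}$ whose fibers are plane cubics in the fibers ${\mathbf P}^2_\eta$ of the projection ${\mathbf P}^3 \dashrightarrow {\mathbf P}^1$; the Mordell--Weil translations and the inversion are then automorphisms of a cubic curve over ${\mathbf C}(t)$ and extend to ${\rm Bir}\,({\mathbf P}^2_\eta) \subset {\rm Bir}\,({\mathbf P}^3)$ by the explicit addition formula (Theorem (\ref{elliptic}), Lemma (\ref{lem312})). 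A quartic whose N\'eron--Severi lattice represents no $(-2)$ contains no lines, so no such fibration-by-cubics exists, and indeed a no-$(-2)$ lattice of large discriminant is exactly the setup of Theorem (\ref{main2}), where \emph{no} nontrivial automorphism is Cremona-derived. Your proposed lattice, if it exists, is thus a candidate counterexample to Gizatullin's question, not an example answering Question (\ref{gizatullin3}).

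Two further points. First, the existence of your lattice (signature $(1,2)$, no $(-2)$-vectors, $O^+(L)$ an ideal-triangle reflection group acting by $\pm 1$ on the discriminant, containing a Saint-Donat-admissible square-$4$ vector) is deferred as a ``delicate Diophantine computation''; in the paper this entire issue is bypassed by the concrete deformation of the Fermat quartic keeping $L, M$, with ${\rm NS}\,(S) = {\mathbf Z}H \oplus {\mathbf Z}L \oplus {\mathbf Z}M$, where the three involutions are the two fibration inversions $\iota_1, \iota_2$ and $\iota_3 = \iota_1 \circ \varphi_1$, and the free product structure comes from a pentagonal fundamental domain (Lemmas (\ref{lem38})--(\ref{lem311})) whose two extra walls are exactly the $(-2)$-curve walls you have excluded. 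Second, for part (3) it is not enough to treat polarisations in the orbit ${\rm Aut}\,(S)^* h$: one must show that \emph{every} very ample class of square $4$ lies in that orbit, which the paper does by classifying all square-$4$ lattice points in the fundamental domain and checking only $H$ is very ample (Lemma (\ref{lem39})); your proposal is silent on this.
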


\begin{theorem}\label{main2}
There is a smooth K3 surface $S$ of Picard number $2$ such that
\begin{enumerate}
\item ${\rm Aut}\, (S) \simeq {\mathbf Z}$. 
\item $S$ admits infinitely many different embeddings $S \to {\mathbf P}^3$ up to ${\rm Aut}\, ({\mathbf P}^3)$.
\item No element ${\rm Aut}\, (S)$ other than 
$id_S$ is derived from ${\rm Bir}\, ({\mathbf P}^3)$ 
in any embeddings of $S$ into ${\mathbf P}^3$. 
\end{enumerate}
\end{theorem}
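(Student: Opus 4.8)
The plan is to reduce the entire statement to the arithmetic of the rank-two Néron--Severi lattice and then to feed the absence of negative and isotropic classes into Takahashi's theorem (\ref{takahashi}) to exclude Cremona-derived automorphisms. First I would fix an even hyperbolic lattice $L$ of rank $2$, signature $(1,1)$, subject to four arithmetic constraints: (a) $L$ represents $4$, say $e^2=4$ for some $e\in L$; (b) the form $Q_L$ takes neither the value $-2$ nor the value $0$ on $L\smallsetminus\{0\}$; (c) the subgroup of $O(L)$ preserving a chosen component $C^+$ of the positive cone and acting as $\pm\mathrm{id}$ on the discriminant group $A_L=L^\ast/L$ is infinite cyclic; (d) $L$ embeds primitively in the K3 lattice $\Lambda=U^{\oplus 3}\oplus E_8(-1)^{\oplus 2}$, which is automatic in rank $2$. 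A concrete candidate is the lattice with Gram matrix $\left(\begin{smallmatrix}4&2\\2&-4\end{smallmatrix}\right)$: here $Q_L=4(x^2+xy-y^2)$ is divisible by $4$, so $-2$ is never attained, while $x^2+xy-y^2=0$ forces $5y^2$ to be a square and hence $x=y=0$, so there are no isotropic vectors; condition (c) is then a finite verification through the Pell equation attached to $Q_L$.

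By surjectivity of the period map I would choose a K3 surface $S$ with $\mathrm{NS}(S)\cong L$ whose transcendental lattice $T(S)$ admits no Hodge isometries other than $\pm\mathrm{id}$, i.e. the generic period point with the prescribed Picard lattice. Since $L$ represents neither $-2$ nor $0$, there are no $(-2)$-curves and no genus-one pencils on $S$, the ample cone equals $C^+$, and the Weyl group is trivial. By the strong Torelli theorem, $\mathrm{Aut}(S)$ is then the group of isometries of $L$ preserving $C^+$ that extend to Hodge isometries of $H^2(S,\mathbf{Z})$; since $T(S)$ has only $\pm\mathrm{id}$ as Hodge isometries, the gluing condition along $A_L$ forces exactly the group in (c), whence $\mathrm{Aut}(S)\cong\mathbf{Z}$, proving assertion (1).

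For (2), every $D\in L$ with $D^2=4$ lying in $C^+$ is ample, and in fact very ample: by Saint-Donat's criterion the only obstructions for a degree-$4$ polarization are an elliptic pencil $E$ with $E\cdot D\le 2$ or a splitting $D\sim 2B$, and both are excluded because $L$ has no isotropic vectors and $B^2=1$ is impossible on a K3 surface. Thus each such $D$ realizes $S$ as a smooth quartic in $\mathbf{P}^3$, and two such classes give projectively equivalent embeddings if and only if they coincide. Since $\mathrm{Aut}(S)\cong\mathbf{Z}$ is infinite and its generator $g$ acts on $L$ with infinite order fixing no ample class, the orbit $\{(g^{\ast})^{n}D\}_{n\in\mathbf{Z}}$ consists of infinitely many distinct very ample classes of square $4$, giving infinitely many embeddings $S\hookrightarrow\mathbf{P}^3$ up to $\mathrm{Aut}(\mathbf{P}^3)$.

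Assertion (3) is where Theorem (\ref{takahashi}) acts, uniformly over all embeddings. Fix any very ample $D$ with $D^2=4$, giving a quartic model with hyperplane class $D$, and suppose $g\in\mathrm{Aut}(S)$ is derived from $\mathrm{Bir}(\mathbf{P}^3)$. By Takahashi's theorem, either $g$ is derived from $\mathrm{PGL}(4,\mathbf{C})$, so that $g^\ast D=D$, or the Cremona transformation forces onto $S$ a base-locus curve of one of the special numerical types (a $(-2)$-curve, respectively a genus-one curve in a prescribed relation with $D$ and $g^\ast D$). The second alternative is impossible since $L$ represents neither $-2$ nor $0$, so $S$ carries no such curves at all; hence $g^\ast D=D$, and as $D$ is ample, $g$ lies in the finite stabilizer $\mathrm{Aut}(S,D)$, whence $g=\mathrm{id}_S$ because $\mathrm{Aut}(S)\cong\mathbf{Z}$ is torsion-free. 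As $D$ was arbitrary, no element of $\mathrm{Aut}(S)$ other than $\mathrm{id}_S$ is derived from $\mathrm{Bir}(\mathbf{P}^3)$ in any embedding. The main obstacle I anticipate is pinning down (c): exhibiting a lattice whose cone-preserving, discriminant-trivial isometry group is exactly infinite cyclic, with no reflective $\mathbf{Z}/2$ factor, so that $\mathrm{Aut}(S)$ is literally $\mathbf{Z}$ and not infinite dihedral. It is worth emphasizing that the same input, the exclusion of $-2$ and $0$ in (b), simultaneously trivializes the Weyl group for the computation of $\mathrm{Aut}(S)$ and annihilates Takahashi's special curves for (3), so both halves of the argument rest on one arithmetic condition.
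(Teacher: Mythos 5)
Your overall strategy for parts (1) and (2) — pick a rank-$2$ even hyperbolic lattice representing $4$ but not $0$ or $\pm 2$, take a Hodge-generic $L$-polarized K3 surface, use Torelli to compute ${\rm Aut}\,(S)$, Saint-Donat to get very ampleness of square-$4$ nef classes, and an infinite orbit to get infinitely many embeddings — is essentially the paper's. But part (3) contains a genuine gap, and it is fatal for your concrete lattice. You paraphrase Theorem (\ref{takahashi}) as a dichotomy in which the non-linear alternative ``forces onto $S$ a base-locus curve of one of the special numerical types (a $(-2)$-curve, respectively a genus-one curve)''; that is not what the theorem says. Its hypothesis is that \emph{every} effective curve $C\subset S$ of degree $(C.H)<16$ is cut out by a hypersurface, i.e.\ that the class of every such curve is an integral multiple of the hyperplane class. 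Absence of square $-2$ and square $0$ classes does not imply this, and for your lattice $\left(\begin{smallmatrix}4&2\\2&-4\end{smallmatrix}\right)$ it demonstrably fails: with basis $e,f$ ($e^2=4$, $f^2=-4$, $e.f=2$) and polarization $e$, the class $c=e+f$ satisfies $(c^2)_S=4$ and $(c.e)_S=6$, so by Riemann--Roch $c$ is effective of degree $6<16$, yet $c$ is not a multiple of $e$, hence no member of $|c|$ is a hypersurface section. So Takahashi's theorem cannot be invoked in your model, and your proof of (3) collapses; your closing claim that the single condition ``no $-2$, no $0$'' simultaneously handles ${\rm Aut}\,(S)$ and Takahashi's hypothesis is false.

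What the paper does differently at exactly this point is the key idea your proposal is missing: it takes the Gram matrix $\left(\begin{smallmatrix}4&4\ell\\4\ell&4\end{smallmatrix}\right)$ with $\ell>5$, so that $|\det {\rm NS}\,(S)|=16(\ell^2-1)>16^2$. Then (Lemma (\ref{lem48})) if an effective curve $C$ of degree $<16$ had class $c$ independent of the polarization $h$, the finite-index sublattice $\langle c,h\rangle$ would have discriminant divisible by $|\det {\rm NS}\,(S)|>16^2$, while $(c^2)_S\ge 0$ (no $(-2)$-classes) forces $|\det\langle c,h\rangle|\le (c.h)_S^2<16^2$ — a contradiction. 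It is this large-discriminant divisibility argument, not the absence of $-2$ and $0$, that verifies Takahashi's hypothesis uniformly for every square-$4$ polarization, which is what makes conclusion (3) hold in \emph{all} embeddings. A secondary issue: your condition (c), that the cone-preserving isometries acting as $\pm{\rm id}$ on the discriminant group form exactly ${\mathbf Z}$ (not infinite dihedral), is left as a ``finite verification''; in the paper this is proved (Lemma (\ref{lem46})) via Sterk's finiteness theorem together with a minimum-dilation argument on the two irrational isotropic rays, and ruling out torsion uses Nikulin's fixed-point results and a discriminant-gluing argument — none of which is a finite check, so you would still owe this for any lattice you pick.
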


We construct a K3 surface $S$ in Theorem (\ref{main2}) by deforming the Fermat quartic K3 surface suitably. This will be done in Section 3 after recalling some preliminary results in Section 2. Our proof is involved but fairly concrete. Note that $S$ admits infinitely many involutions but no automorphism of finite order $\ge 3$ (Theorem (\ref{main1})(1)). Theorem (\ref{main1})(3), in particular, says that all finite order automorphisms of $S$ are derived from ${\rm Bir}\, ({\mathbf P}^3)$ but none from ${\rm Aut}\, ({\mathbf P}^3)$ except $id_S$ (cf. Question (\ref{gizatullin5})).  

As one of relevant important results, it is worth mentioning here that the Kummer surface ${\rm Km}({\rm Jac}\, (C))$ of the Jacobian of a generic curve $C$ of genus $2$ is birational to a quartic surface $Q$ with $16$ nodes and ${\rm Aut}\, ({\rm Km}({\rm Jac}\, (C)))$ is derived from ${\rm Bir}\, ({\mathbf P}^3)$ via $Q$ (\cite{Ke97}, \cite{Ko98}). 

Unlike Theorem(\ref{main1})(3), Theorem (\ref{main2}) gives an affrimative answer to Question (\ref{gizatullin6}). This is originally asked by Doctor Sergey Galkin to me at Kinosaki conference (2011, October). Theorem (\ref{main2}) is 
proved in Section 4 again being based on the following special case of a more general result of Takahashi (\cite[Theorem 2.3, Remark 2.4]{Ta98}):

\begin{theorem}\label{takahashi}
Let $H$ be the hyperplane class of ${\mathbf P}^3$ and let $S \subset {\mathbf P}^3$ be a smooth quartic surface such that any effective curve $C$ on $S$ with ${\rm deg}\, C := (C.H)_{{\mathbf P}^3} < 16$ is of the form $C = T \vert S$ for some hypersurface $T$ of ${\mathbf P}^3$. Then, for any $g \in {\rm Bir}\,({\mathbf P}^3) \setminus {\rm Aut}\,({\mathbf P}^3)$, $K_{{\mathbf P}^3} + g_*S$ is ample. In particular, $g_*S \not= S$. 
\end{theorem}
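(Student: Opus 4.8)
The plan is to reduce the statement to a lower bound on the degree of $g_*S$, and then to exclude the small degrees by a Noether--Fano argument in which the hypothesis serves precisely to forbid the dangerous base loci. Since ${\rm Pic}\,({\mathbf P}^3) = {\mathbf Z}H$ and $K_{{\mathbf P}^3} = -4H$, the proper transform $g_*S$ is an irreducible divisor with $g_*S \sim mH$, $m = \deg g_*S$, and $K_{{\mathbf P}^3} + g_*S = (m-4)H$. Thus the claim is equivalent to $m \ge 5$, and it suffices to exclude $m \le 4$. I would record at the outset that $({\mathbf P}^3, S)$ is a (log) Calabi--Yau pair, $K_{{\mathbf P}^3} + S \sim 0$, since $S$ is a quartic; this is what makes a Sarkisov/Noether--Fano analysis available.

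First I would check that $g$ does not contract $S$: a divisor contracted by a birational map of smooth threefolds is uniruled, whereas the K3 surface $S$ is not uniruled. Hence the induced rational map $g|_S : S \to S' := g_*S$ is dominant onto a surface, and since $g$ is birational on ${\mathbf P}^3$ the general point of $S'$ has a unique preimage on $S$, so $g|_S$ is birational. Consequently $S'$ is an irreducible surface birational to $S$. As surfaces in ${\mathbf P}^3$ of degree $\le 3$ are rational, none of them is birational to a K3 surface; therefore $m = \deg S' \ge 4$, and the whole problem is to exclude $m = 4$.

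For $m = 4$ I would run the Noether--Fano machinery. Let $\mathcal{M} \subseteq |nH|$ be the strict transform under $g$ of the hyperplanes of the target, so $n = \deg g \ge 2$ because $g \notin {\rm Aut}\,({\mathbf P}^3)$; resolve $g$ by $p, q : W \to {\mathbf P}^3$ and write $q^{*}H = p^{*}(nH) - \sum_i m_i E_i$. If $K_{{\mathbf P}^3}+g_*S$ were not ample, i.e. $m \le 4$, the mobile pair $({\mathbf P}^3, \frac{1}{n}\mathcal{M})$ cannot be canonical: the Noether--Fano inequality produces a geometric valuation $E$ with ${\rm mult}_E \mathcal{M}$ exceeding the allowed discrepancy, that is, a maximal centre $Z \subset {\mathbf P}^3$. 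Using the triviality of $K_{{\mathbf P}^3}+S$ together with $\deg g_*S = 4$, I would argue that such a centre is forced onto $S$ (or infinitely near a point of $S$), and --- this is the crucial numerical point --- that the intersection bookkeeping on $S$, namely $((g|_S)^{*}\mathcal{O}_{S'}(1))^2 = \deg S' = 4$ rewritten as $4 = 4n^2 - 2n\,(\phi . H_S) + \phi^2$ for the fixed part $\phi$ of $\mathcal{M}|_S$, forces $Z$ to be a curve on $S$ of degree $(Z.H) < 16$.

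Finally I would invoke the hypothesis. A curve of degree $< 16$ on $S$ is cut out by a hypersurface, so its class in ${\rm Pic}\,(S)$ is a multiple $kH_S$ of the hyperplane class, of degree $4k \in \{4,8,12\}$ and with the arithmetic genus of a complete intersection. On the other hand the Noether--Fano multiplicity forces $Z$ to have numerical invariants incompatible with a complete intersection --- for instance a degree not divisible by $4$ (the cubo-cubic transformation, whose base curve is a sextic of genus $3$ with $(g|_S)^{*}\mathcal{O}(1) = 3H_S - Z$ and $\deg g_*S = 4$, is the model case) --- a contradiction. Hence no maximal centre exists, $({\mathbf P}^3, \frac{1}{n}\mathcal{M})$ is canonical, and Noether--Fano forces $g$ to be an isomorphism, i.e. $g \in {\rm Aut}\,({\mathbf P}^3)$, against the assumption. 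Therefore $m \ge 5$, $K_{{\mathbf P}^3} + g_*S$ is ample, and in particular $g_*S \ne S$. The main obstacle is the middle step: pinning the maximal centre onto $S$ and proving the sharp bound $(Z.H) < 16$, i.e. calibrating the Noether--Fano multiplicity estimate to the quartic degree $\deg S = 4$; treating zero-dimensional and infinitely-near centres and several base components is the technical heart.
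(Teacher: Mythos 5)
First, a point of reference: the paper does not prove this theorem at all --- it is imported verbatim as a special case of Takahashi's result, citing \cite[Theorem 2.3, Remark 2.4]{Ta98}, and Takahashi's own proof (the cited paper is literally titled ``An application of Noether--Fano inequalities'') follows exactly the strategy you outline. So your plan is the historically correct one, and your opening reductions are sound: $K_{\mathbf P^3}+g_*S=(m-4)H$ with $m=\deg g_*S$, so ampleness is equivalent to $m\ge 5$; and $m\ge 4$ holds because a divisor contracted by a birational map is uniruled (so $g$ cannot contract the K3 surface $S$), $g\vert S$ is birational onto $g_*S$, and no surface of degree $\le 3$ in ${\mathbf P}^3$ is birational to a K3 surface.

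Nevertheless, what you have written is a plan and not a proof, and the gap is exactly where you yourself locate it: the two steps that carry the entire mathematical content are announced with ``I would argue'' and never carried out. (i) You must show that $m\le 4$ forces a maximal centre which is a \emph{curve lying on} $S$ of degree $<16$, disposing along the way of point centres, infinitely near centres, and centres not on $S$. Your setup does not yet do this even in outline: for ${\mathbf P}^3$ the classical Noether--Fano threshold is $\frac{4}{n}\mathcal M$, not $\frac{1}{n}\mathcal M$, and non-canonicity of $({\mathbf P}^3,\frac{4}{n}\mathcal M)$ follows from $g\notin{\rm Aut}\,({\mathbf P}^3)$ \emph{alone}, with no reference to $m$ --- so the existence of a maximal centre by itself carries no information about $\deg g_*S$, and your claimed implication ``$m\le 4$ $\Rightarrow$ the pair is not canonical'' is both misnormalized and logically misdirected. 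To see $m$ at all, the boundary $S$ must be built into the crepant comparison (note $K_{\mathbf P^3}+S\equiv 0$ and the pair $({\mathbf P}^3,S)$ is canonical), and this bookkeeping, together with a multiplicity estimate of the shape $\deg C\cdot({\rm mult}_C\mathcal M)^2\le n^2$ with ${\rm mult}_C\mathcal M>n/4$ (whence $\deg C<16$), is precisely the technical heart you defer. (ii) Your exclusion of the remaining case is not valid as stated: if the maximal curve $C$ is cut out by a hypersurface, its degree lies in $\{4,8,12\}$, and these cases must be excluded by a numerical argument (self-intersection/genus computation for the mobile part of $\mathcal M\vert_S$ using $[C]$ proportional to $H\vert_S$); pointing out that the cubo-cubic example has a base sextic, i.e.\ a degree not divisible by $4$, excludes nothing --- the hypothesis of the theorem is there exactly to rule out curves \emph{not} of the form $T\vert S$, and the complete-intersection curves it permits still have to be shown not to be maximal centres. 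Since (i) and (ii) together constitute Takahashi's theorem, the proposal has a genuine gap rather than an alternative proof.
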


{\bf Notation.} Throughout this note $L_K = L \otimes_{{\mathbf Z}} K$ for ${\mathbf Z}$-module $L$ and ${\mathbf Z}$-algebra $K$. For a K3 surface $S$, ${\rm NS}\, (S)$ is the N\'eron-Severi lattice. This is naturally isomorphic to ${\rm Pic}\, (S)$ and is a sublattice of $H^2(S, {\mathbf Z})$ with natural intersection form $(*.**)_S$. The orthogonal complement $T(S) := {\rm NS}\, (S)^{\perp}_{H^2(S, {\mathbf Z})}$ of ${\rm NS}\, (S)$ in $H^2(S, {\mathbf Z})$ is the transcendental lattice. $T(S)$ is the minimal primitive sublattice of $H^2(S, {\mathbf Z})$ such that $H^0(S, \Omega_S^2) = {\mathbf C}\sigma_S \subset T_{{\mathbf C}}$. The dual lattice ${\rm NS}\, (S)^*$ of ${\rm NS}\, (S)$ is $\{x \in {\rm NS}\, (S)_{{\mathbf Q}}\, \vert \, (x.{\rm NS}\,(S))_S \subset {\mathbf Z}\}$. The dual lattice $T(S)^{*}$ is defined similarly from $T(S)$. We have ${\rm NS}\,(S) \subset {\rm NS}\,(S)^*$, $T(S) \subset T(S)^*$ and there is a natural isomorphism ${\rm NS}\, (S)^*/{\rm NS}\, (S) \simeq T(S)^*/T(S)$ compatible with the natural action of ${\rm Aut}\, (S)$ (\cite[Proposition 1.6.1]{Ni79}). Similarly $L^{*}/L \simeq (L^{\perp})^*/L^{\perp}$ for non-degenerate primitive sublattice $L$. This isomorphism is compatible with the natural action of ${\rm Aut}\, (S, L)$, the subgroup consisting of elements $g \in {\rm Aut}\, (S)$ such that $g^*(L) = L$. The positive cone $P(S)$ is the connected component of $\{x \in {\rm NS}\, (S)\, \vert\, (x^2)_S > 0\}$ containing ample classes. The effective nef cone $\overline{{\rm Amp}}^e\,(S)$ is the convex cone in ${\rm NS}\, (S)_{\mathbf R}$ generated by the effective nef classes. The nef cone $\overline{{\rm Amp}}\,(S)$ is the closure of $\overline{{\rm Amp}}^e\,(S)$ in the topological vector space ${\rm NS}\, (S)_{\mathbf R}$. Our reference on basic facts on K3 surfaces and their projective models are \cite[Chapter VIII]{BHPV04} and \cite{SD74}.

{\bf Acknowledgement.} I would like to express my thanks to Doctors Sergey Galkin, Damiano Testa, Professros Igor Dolgachev, Jun-Muk Hwang, 
Yuri Prokhorov and De-Qi Zhang for valuable discussions relevant to this work. This note is grown up from my talk at the conference ``Essential dimension and Cremona groups" held at Chern Institute, June 2012. I would like to express my thanks to Professor Ivan Cheltsov for invitation and hospitality. 

\section{Preliminaries.}

The following two theorems (Theorems (\ref{classical}), (\ref{elliptic})) explain well why Gizatullin's question is interesting only for quartic K3 surfaces among smooth hypersurfaces. Theorem (\ref{classical}) is due to Matsumura 
and Monsky 
(\cite{MM63}) when $n \ge 2$ 
and Chang (\cite{Ch78}) when $n=1$. 

\begin{theorem}\label{classical}
Let $n$ and $d$ be positive integers and $X$ be a smooth hypersurface of degree $d$ in ${\mathbf P}^{n+1}$. If $(n, d) \not= (2, 4), (1,3)$, then 
${\rm Aut}\, (X)$ is derived from ${\rm Aut}\, ({\mathbf P}^{n+1})$. 
\end{theorem}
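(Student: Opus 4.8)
The plan is to reduce everything to showing that each $g \in {\rm Aut}\,(X)$ fixes the hyperplane class $H\vert X \in {\rm Pic}\,(X)$; the rest is then formal. By adjunction one has $K_X = (d-n-2)\,(H\vert X)$. Granting $g^*(H\vert X) = H\vert X$, the automorphism $g$ acts linearly on $H^0(X, \sO_X(H\vert X))$, and from the restriction sequence
\[
0 \to \sO_{{\mathbf P}^{n+1}}(1-d) \to \sO_{{\mathbf P}^{n+1}}(1) \to \sO_X(H\vert X) \to 0,
\]
together with $H^0({\mathbf P}^{n+1}, \sO(1-d)) = H^1({\mathbf P}^{n+1}, \sO(1-d)) = 0$ for $d \ge 2$ and $n \ge 1$, the restriction map identifies $H^0(X, \sO_X(H\vert X))$ with the space of linear forms $H^0({\mathbf P}^{n+1}, \sO(1))$. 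Hence the $g$-action produces $\tilde g \in {\rm PGL}\,(n+2, {\mathbf C}) = {\rm Aut}\,({\mathbf P}^{n+1})$ restricting to $g$ on $X$, which is the assertion. The case $d = 1$ is trivial, as then $X \cong {\mathbf P}^n$.

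For $n \ge 3$ I would invoke the Lefschetz hyperplane theorem: ${\rm Pic}\,(X) = {\mathbf Z}\cdot(H\vert X)$, so any $g$ acts on ${\rm Pic}\,(X) \cong {\mathbf Z}$ by $\pm 1$, and since $g$ preserves ampleness it fixes the ample generator $H\vert X$. This settles $n \ge 3$ for every $d$, including the Calabi--Yau range $d = n+2$. For $n = 2$ the surface $X$ is simply connected, so ${\rm Pic}\,(X) = {\rm NS}\,(X)$ is torsion-free; here $g^*K_X = K_X$ reads $(d-4)\,g^*(H\vert X) = (d-4)\,(H\vert X)$, and since $d \ne 4$ one may cancel to get $g^*(H\vert X) = H\vert X$. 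This is precisely where the excluded case $(n,d)=(2,4)$ enters: there $K_X = 0$ and the cancellation collapses.

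The hard part will be $n = 1$, where the excluded case is $(1,3)$. For a smooth plane curve of degree $d$ one still has $K_X = (d-3)\,(H\vert X)$ and $g^*K_X = K_X$, but now ${\rm Pic}\,(X)$ carries the large torsion coming from the Jacobian ${\rm Pic}^0(X)$; preserving $K_X$ only yields $(d-3)\,\bigl(g^*(H\vert X) - H\vert X\bigr) = 0$, so $g^*(H\vert X)$ may a priori differ from $H\vert X$ by a $(d-3)$-torsion point. Eliminating this ambiguity is the crux of the argument.

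To handle it I would exploit that $d \ge 4$ forces $X$ to be non-hyperelliptic, so $g$ preserves the canonical embedding $\phi_{K_X}$; identifying $H^0(X, K_X)$ with $H^0({\mathbf P}^2, \sO(d-3))$ via the restriction sequence as above realizes this action on the space of degree-$(d-3)$ forms, that is, on ${\rm Sym}^{d-3}$ of the linear forms on ${\mathbf P}^2$. The remaining task is to show that the canonical image of $X$ determines the ambient plane --- for instance as the unique Veronese surface $v_{d-3}({\mathbf P}^2)$ through it, or via the projective normality of $X \subset {\mathbf P}^2$ --- which forces the ${\rm Sym}^{d-3}$-action of $g$ to descend to ${\rm PGL}\,(3,{\mathbf C})$ and hence pins down $g^*(H\vert X) = H\vert X$. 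This is the content of Chang's separate treatment of the curve case, and it is the step I expect to require genuine extra work beyond the uniform adjunction argument.
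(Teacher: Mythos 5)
Note first that the paper contains no proof of this statement: it is quoted as a known result, attributed to Matsumura--Monsky \cite{MM63} for $n \ge 2$ and to Chang \cite{Ch78} for $n = 1$, so your attempt can only be measured against those references. Your treatment of $n \ge 2$ is correct and complete: Grothendieck--Lefschetz gives ${\rm Pic}\,(X) = {\mathbf Z}\cdot (H\vert X)$ for $n \ge 3$, so the ample generator is fixed; for $n = 2$, simple connectedness gives torsion-freeness of ${\rm Pic}\,(X)$ and adjunction with $d \ne 4$ lets you cancel; and the restriction-sequence argument correctly extends $g$ to ${\rm PGL}\,(n+2,{\mathbf C})$ once $g^*(H\vert X) = H \vert X$ is known, since the embedding is by the complete linear system $\vert {\mathcal O}_X(1)\vert$. (This is the standard modern route; the original argument of \cite{MM63} is quite different, but that is immaterial.)

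The genuine gap is in $n = 1$, and it is exactly where you flag it. First, note that $d = 4$ is actually immediate, since then $d - 3 = 1$ and $(d-3)\bigl(g^*(H\vert X) - H\vert X\bigr) = 0$ already gives $g^*(H\vert X) = H\vert X$; the problem begins at $d \ge 5$. There, the statement you propose to invoke --- that the canonical image of $X$ determines the ambient plane as \emph{the unique} Veronese surface $v_{d-3}({\mathbf P}^2)$ through it --- is not an available lemma but a reformulation of precisely what must be proved: a second Veronese surface of degree $d-3$ through the canonical curve is the same datum as a second $g^2_d$ on $X$, i.e.\ a line bundle $L'$ with $(L')^{\otimes (d-3)} \simeq K_X$, $\deg L' = d$, $h^0(L') = 3$. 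Moreover, your suggested mechanism for pinning down the Veronese --- quadrics through the canonical curve --- works only for $d = 5$: by Petri's theorem the quadrics through a canonically embedded smooth plane quintic intersect in the Veronese surface, but for $d \ge 6$ the curve is neither trigonal nor a plane quintic, so those same quadrics generate the homogeneous ideal of the curve and cut out $X$ itself, not the Veronese. What is needed to close the argument is the classical uniqueness of the $g^2_d$ on a smooth plane curve of degree $d \ge 4$ (Max Noether's theorem; this is the content of \cite{Ch78}, which is exactly the reference the paper cites for $n=1$). So: correct and essentially self-contained for $n \ge 2$, correctly diagnosed but genuinely incomplete for $n = 1$, $d \ge 5$.
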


When $(n, d) = (1,3)$, $X = E$ is a smooth cubic curve in ${\mathbf P}^{2}$. 
The following theorem should be classical and well known:
\begin{theorem}\label{elliptic}
Let $E \subset {\mathbf P}^2$ be a smooth cubic curve. Then 
${\rm Aut}\, (E)$ is derived from ${\rm Bir}\, ({\mathbf P}^{2})$.
\end{theorem}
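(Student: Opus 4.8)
\emph{Plan.} The idea is to exploit the group structure of the elliptic curve $E$. Fix a flex $O \in E$ as the origin of the group law, so that three points of $E$ are collinear exactly when their sum is $O$. Then ${\rm Aut}\,(E) = E \rtimes {\rm Aut}\,(E, O)$, where $E$ acts by the translations $t_a \colon P \mapsto P + a$ and ${\rm Aut}\,(E, O)$ is the finite cyclic group (of order $2$, $4$ or $6$) of automorphisms fixing $O$. The set of elements of ${\rm Aut}\,(E)$ derived from ${\rm Bir}\,({\mathbf P}^2)$ is a subgroup: a composition or inverse of birational self-maps of ${\mathbf P}^2$ that preserve $E$ without contracting it again preserves $E$ and restricts to the corresponding composition on $E$. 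Hence it suffices to treat the two kinds of generators separately. The elements of ${\rm Aut}\,(E, O)$ are induced by ${\rm PGL}\,(3, {\mathbf C}) = {\rm Aut}\,({\mathbf P}^2) \subset {\rm Bir}\,({\mathbf P}^2)$, since in Weierstrass (or Hesse) normal form they act linearly; for instance $[-1]\colon P \mapsto -P$ is a linear involution. Thus the whole problem reduces to realizing the translations $t_a$.

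The key preliminary observation is a dichotomy governed by the hyperplane class $H|_E \sim 3O$. Since $t_a^{*}\,\mathcal{O}_E(3O) \cong \mathcal{O}_E\big(3\cdot(-a)\big)$, which is isomorphic to $\mathcal{O}_E(3O)$ if and only if $3a = O$ in the group law, the translation $t_a$ preserves the polarization precisely when $a$ is a $3$-torsion point. For $a \in E[3]$ the map $t_a$ therefore acts on $H^0({\mathbf P}^2, \mathcal{O}(1)) \cong H^0(E, \mathcal{O}_E(1))$ and so extends to an element of ${\rm PGL}\,(3, {\mathbf C})$ (the nine flexes of $E$ are exactly the points of $E[3]$, and the Hesse group of projective automorphisms of $E$ acts on them through these translations). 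For $a \notin E[3]$, however, $t_a$ moves the polarization and cannot be induced by any linear transformation; it is genuinely a Cremona transformation. This is the heart of the matter and explains why ${\rm Bir}\,({\mathbf P}^2)$, rather than ${\rm Aut}\,({\mathbf P}^2)$, is needed.

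To realize a translation $t_a$ with $a \notin E[3]$, I would use a quadratic Cremona transformation. Choose three distinct points $p_1, p_2, p_3 \in E$ with $p_1 + p_2 + p_3 = 3a$ in the group law; such triples form a two-parameter family, and because $3a \neq O$ they are automatically non-collinear (collinearity would force the sum to be $O$). Let $\Phi_0 \colon {\mathbf P}^2 \dashrightarrow {\mathbf P}^2$ be the standard quadratic transformation based at $p_1, p_2, p_3$, obtained by blowing up the $p_i$ and blowing down the strict transforms of the lines $\overline{p_i p_j}$. Then $\Phi_0$ restricts to an isomorphism $\phi_0 \colon E \xrightarrow{\ \sim\ } E'$ onto another smooth plane cubic $E'$, with $\phi_0^{*}\,\mathcal{O}_{E'}(1) \sim 6O - (p_1 + p_2 + p_3)$, a degree-three class. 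Since $E' \cong E$ via $\phi_0$ it has the same $j$-invariant as $E$, and as the coarse moduli of smooth plane cubics modulo ${\rm PGL}\,(3,{\mathbf C})$ is the $j$-line, $E'$ is projectively equivalent to $E$. Choosing $\ell \in {\rm PGL}\,(3,{\mathbf C})$ with $\ell(E') = E$, the composition $\Phi := \ell \circ \Phi_0$ preserves $E$, and by adjusting $\ell$ within the finite group of projective automorphisms of $E$ (which contains the $E[3]$-translations) one arranges $\Phi|_E = t_a$ exactly, as one checks by comparing induced actions on the hyperplane class. This exhibits $t_a$ as derived from ${\rm Bir}\,({\mathbf P}^2)$, and together with the linear case completes the proof.

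The main obstacle is the final step: producing an honest birational \emph{self-map} of ${\mathbf P}^2$ rather than merely an isomorphism between two different plane models of $E$. The quadratic transformation naturally carries $E$ to a distinct cubic $E'$, and one must both verify that $E'$ is projectively equivalent to $E$ and use the residual projective freedom to pin the induced automorphism down to exactly $t_a$, and not to $t_a$ post-composed with a stray $3$-torsion translation or an element of ${\rm Aut}\,(E,O)$. Keeping the divisor-class bookkeeping precise — carefully distinguishing the group law on the abstract curve from the two polarizations involved — is where the real care is required; the underlying geometry is the classical construction for adding a fixed point on a plane cubic via lines and residual intersections.
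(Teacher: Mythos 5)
Your proof is correct, and although it starts from the same decomposition as the paper (fix an origin $O$, split ${\rm Aut}\,(E)$ into translations and the finite group ${\rm Aut}\,(E,O)$, and realize the latter by linear maps in a normal form), your treatment of the translations --- which is the whole content of the theorem --- is genuinely different. The paper handles them by pure computation: it takes the classical addition formulas
$$s' = \left(\frac{t-b}{s-a}\right)^2 - s - a\,\, ,\,\, t' = \frac{t-b}{s-a}(s'-a) + b\,\, ,$$
reads them as a rational self-map $\varphi_{(a,b)}$ of ${\mathbf P}^2$, and checks birationality by solving back for $(t-b)/(s-a)$, then $s$, then $t$; this map visibly restricts to translation by $(a,b)$ on $E$. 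You instead isolate the precise dichotomy ($t_a$ extends to ${\rm PGL}\,(3,{\mathbf C})$ if and only if $a \in E[3]$) and realize the remaining translations by quadratic Cremona transformations based at three points of $E$ with sum $3a$, together with divisor-class bookkeeping. What each approach buys: yours is more structural and explains \emph{why} ${\rm Bir}\,({\mathbf P}^2)$ is genuinely needed (the paper only remarks afterwards that linear maps realize finitely many automorphisms); the paper's is completely explicit and, as it emphasizes just before the proof, works verbatim over any field $K$ of characteristic $0$ with $E(K) \neq \emptyset$ --- a feature it exploits in Lemma (\ref{lem312}), where Theorem (\ref{elliptic}) is applied to the generic fiber of the elliptic fibration $\Phi_1$ over ${\mathbf C}({\mathbf P}^1)$. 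Your argument as written is tied to ${\mathbf C}$: you invoke the $j$-line as coarse moduli of smooth plane cubics, and you need three distinct, suitably general points of $E(K)$ with prescribed sum; both steps require extra care over a non-closed field (e.g. when $E(K)$ is finite), so your proof would not transplant to the paper's later application without modification.

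Two smaller points. First, choose the $p_i$ generically so that no line $\overline{p_ip_j}$ is tangent to $E$ at a base point; this guarantees that the quadratic map restricts to an isomorphism of $E$ onto a \emph{smooth} cubic $E'$. Second, you can bypass the $j$-invariant and the final ``adjustment'' entirely: the isomorphism $t_a \circ \phi_0^{-1} : E' \to E$ pulls ${\mathcal O}_E(1) \cong {\mathcal O}_E(3O)$ back to ${\mathcal O}_{E'}(1)$ (both classes have degree $3$ and sum $-3a$ in the group law), so, since both plane embeddings are given by complete linear systems, it is induced by some $\ell \in {\rm PGL}\,(3, {\mathbf C})$ with $\ell(E') = E$; then $(\ell \circ \Phi_0)\vert E = t_a$ on the nose, with no stray $3$-torsion translation to remove.
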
 
Since an argument below will be used in our proof of Theorem (\ref{main1}), we sketch the proof. The point is that the same proof works with no modification for an elliptic curve $E/K$ defined over any field $K$ of characteristic $0$ with $E(K) \not= \emptyset$. 
\begin{proof} Choose $O \in E$ and regard $O$ as the unit element of the elliptic curve $E$. Then ${\rm Aut}\, (E)$ is the semi-direct product 
of $E$, the group of translations, and ${\rm Aut}(E, O)$ fixing $O$. Here ${\rm Aut}\, (E, O)$ is a finite cyclic group of order $d \in \{2, 4, 6 \}$. 
By linear change of the coordinate of ${\mathbf P}^{2}$, we can rewrite the equation of $E$ in the Weierstrass form:
$$y^2 = x^3 + px + q\,\, .$$
If ${\rm Aut}\,(E, O)$ is of order $6$ (resp. $4$), we can make $p = 0$ 
(resp. $q =0$). The generator of ${\rm Aut}\,(E, O)$ is 
$(x, y) \mapsto (x, -y)$ 
when $d = 2$, $(x, y) \mapsto (-x, \sqrt{-1}y)$ when $d = 4$ and $(x, y) \mapsto (\zeta_3 x, -y)$ when $d = 6$. Here $\zeta_3$ is the primitive third root of unity. So ${\rm Aut}(E, O)$ is derived from ${\rm Aut}\, ({\mathbf P}^{2})$. Let us consider the translation of $E$ by $(a, b) \in E$. The classical addition formula says that $(s', t') = (s, t) + (a, b)$ on $E$ if and only if 
$$s' = (\frac{t-b}{s-a})^2 - s - a\,\, ,\,\, t' = \frac{t-b}{s-a}(s'-a) + b\,\, .$$
Consider the rational map $\varphi_{(a, b)} : {\mathbf P}^2 \cdots \to {\mathbf P}^2$ defined 
by the same equations above. Then given $(s', t') \in {\mathbf P}^{2}$, the term $(t-b)/(s-a)$ is uniquely determined from $(s', t')$ as a rational function of $(s', t')$, by the second formula. Then $s$ is uniquely determined by the first formula and $t$ is uniquely recovered from $(t-b)/(s-a)$, as rational functions of $(s', t')$. Thus $\varphi_{(a, b)} \in {\rm Bir}\, ({\mathbf P}^{2})$ and $\varphi_{(a, b)} \vert E$ is the original addition by 
$(a, b) \in E$. 
\end{proof}
One can also check that there are only finitely many $g \in {\rm Aut}\, (E)$ of the form $g = \tilde{g} \vert E$ with $\tilde{g} \in {\rm Aut}\, ({\mathbf P}^2)$. So, it is really necessary to enlarge ${\rm Aut}\, ({\mathbf P}^2)$ 
to ${\rm Bir}\, ({\mathbf P}^2)$ in Theorem (\ref{elliptic}). 

We close this section by recalling the following well known result 
which will be used in the remaining sections:

\begin{proposition}\label{k3}
Let $S$ be a K3 surface. Then:
\begin{enumerate}
\item Any embedding $\Phi : S \to {\mathbf P}^3$ 
is of the form $\Phi = \Phi_{\vert H \vert}$. Here $\Phi_{\vert H \vert}$ 
is the morphism associated with the complete linear system of a very ample divisor $H$ such that $(H^2)_S = 4$. 
\item Under (1), the group $\{g \in {\rm Aut}\, (S)\, \vert\, g^*H = H\}$ is the subgroup of ${\rm Aut}\, (S)$ consisting of elements being derived from ${\rm Aut}\, ({\mathbf P}^3)$ under $\Phi_{\vert H \vert}$ 
and this is a finite group. 
\item Let $g \in {\rm Aut}\, (S)$ and $H$ be a very ample line bundle such that $(H^2)_S = 4$. Then $g^*H' = H$ if and only if there is an isomorphism 
$\tilde{g} : {\mathbf P}^3 \to {\mathbf P}^3$ between the target spaces 
such that $\tilde{g} \circ \Phi_{\vert H \vert} = 
\Phi_{\vert H' \vert} \circ g$. 
\end{enumerate}
\end{proposition}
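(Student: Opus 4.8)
The plan is to prove Proposition (\ref{k3}) using the standard theory of projective models of K3 surfaces, for which the key input is that every very ample divisor $H$ with $(H^2)_S = 4$ embeds $S$ as a smooth quartic surface, and conversely every embedding into ${\mathbf P}^3$ arises this way.

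\textbf{Part (1).} First I would recall that for any embedding $\Phi : S \to {\mathbf P}^3$, the pullback $H := \Phi^* {\mathcal O}_{{\mathbf P}^3}(1)$ is a very ample divisor class, and $\Phi$ factors as $\Phi = \iota \circ \Phi_{\vert H \vert}$ where $\Phi_{\vert H \vert}$ is the morphism associated to the complete linear system $\vert H \vert$ and $\iota$ is a linear embedding of the image projective space. One checks that $\Phi$ is already given by the complete system: since $S$ is a K3 surface, $h^0(S, H) = \chi(H) = 2 + (H^2)_S/2$ by Riemann--Roch (the higher cohomology vanishing $h^1 = h^2 = 0$ for an ample line bundle on a K3 surface follows from Kodaira vanishing and Serre duality, using $K_S = 0$). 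Because $\Phi$ is a closed embedding into ${\mathbf P}^3$, the image spans ${\mathbf P}^3$ and the four coordinate functions are linearly independent sections of $H$; nondegeneracy forces $h^0(S,H) = 4$, hence $(H^2)_S = 4$ and the linear system used by $\Phi$ is already complete, so $\Phi = \Phi_{\vert H \vert}$ up to a projective coordinate change.

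\textbf{Parts (2) and (3).} These are two sides of the same computation, so I would prove (3) first and deduce (2). For (3), suppose $g \in {\rm Aut}\,(S)$ and $H'$ is a second very ample class with $(H'^2)_S = 4$. If $g^* H' = H$, then $g$ induces an isomorphism of complete linear systems $g^* : \vert H' \vert \xrightarrow{\sim} \vert H \vert$, which is exactly the data of a linear isomorphism $\tilde{g} : {\mathbf P}^3 \to {\mathbf P}^3$ of the target projective spaces making the square $\tilde{g} \circ \Phi_{\vert H \vert} = \Phi_{\vert H' \vert} \circ g$ commute; concretely $\tilde g$ is the projectivization of the transpose of $g^*$ acting on $H^0(S, H') \cong H^0(S, H)$. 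Conversely, if such a $\tilde g$ exists, pulling back ${\mathcal O}(1)$ around the square gives $g^* H' = H$ immediately. Part (2) is then the special case $H' = H$: the elements derived from ${\rm Aut}\,({\mathbf P}^3)$ under $\Phi_{\vert H \vert}$ are precisely those $g$ admitting a $\tilde g$ with $\tilde g_* S = S$ and $\tilde g \vert S = g$, which by (3) is the condition $g^* H = H$.

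\textbf{Finiteness.} The one genuinely nontrivial point is that the stabilizer $\{g \in {\rm Aut}\,(S) \mid g^* H = H\}$ is finite; this I expect to be the main obstacle. The argument is that ${\rm Aut}\,(S)$ acts faithfully on the cohomology $H^2(S, {\mathbf Z})$ (faithfulness on ${\rm NS}\,(S)$ together with the Torelli-type rigidity: an automorphism acting trivially on $H^2$ is the identity), and the subgroup fixing the ample class $H$ preserves the ample cone and acts as a group of isometries of the lattice ${\rm NS}\,(S)$ fixing a vector of positive self-intersection. Since such isometries preserve the positive-definite form on $H^{\perp} \cap {\rm NS}\,(S)_{\mathbf R}$ (the signature of ${\rm NS}\,(S)$ being $(1, \rho - 1)$ and $H$ spanning the positive part), the stabilizer embeds into the orthogonal group of a negative-definite lattice, which is finite; faithfulness then forces the automorphism subgroup itself to be finite. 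I would invoke this standard consequence of the Global Torelli theorem rather than reprove it, citing the basic references on K3 surfaces already listed in the paper.
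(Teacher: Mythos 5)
The genuine gap is in your part (1), at the step ``nondegeneracy forces $h^0(S,H)=4$.'' Nondegeneracy of the image only gives injectivity of the restriction map $H^0({\mathbf P}^3,{\mathcal O}_{{\mathbf P}^3}(1))\to H^0(S,H)$, i.e.\ $h^0(S,H)\ge 4$; it can never force equality, since in general a variety embedded by a proper linear subsystem of a larger complete system is still nondegenerate (this is exactly what happens for isomorphic linear projections). Your preliminary claim that $\Phi$ ``factors as $\iota\circ\Phi_{\vert H\vert}$ with $\iota$ a linear embedding'' presupposes $h^0(S,H)\le 4$, which is the very point at issue: the correct general factorization is through a linear \emph{projection}, not an embedding. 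And you cannot instead get $(H^2)_S=4$ from Riemann--Roch, because that requires already knowing $h^0(S,H)=4$; as written the argument is circular. The missing ingredient is precisely the paper's first stated fact, that $S\subset{\mathbf P}^3$ has degree $4$, which comes from adjunction: since $S$ is a smooth surface of degree $d=(H^2)_S$ in ${\mathbf P}^3$, one has ${\mathcal O}_S\cong K_S=(K_{{\mathbf P}^3}+S)\vert_S={\mathcal O}_S(d-4)$, and ampleness of ${\mathcal O}_S(1)$ forces $d=4$. Only then do Riemann--Roch and Kodaira vanishing give $h^0(S,H)=2+(H^2)_S/2=4$, so the injective restriction map is an isomorphism and the linear system defining $\Phi$ is complete. (Equivalently, as in the paper, surjectivity of restriction follows from $H^1({\mathbf P}^3,{\mathcal I}_S(1))=H^1({\mathbf P}^3,{\mathcal O}_{{\mathbf P}^3}(-3))=0$ --- but this again uses ${\mathcal I}_S\cong{\mathcal O}_{{\mathbf P}^3}(-4)$, i.e.\ the degree computation you skipped.)

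Your parts (2) and (3) are correct and are in substance the paper's argument: once restriction of sections is an isomorphism, $\tilde g$ is the projectivized transpose of $g^*$ on $H^0(S,H')\cong H^0(S,H)$, and the converse is the pullback computation you give. For finiteness you take a different route from the paper: the paper deduces it from the absence of nonzero global holomorphic vector fields on $S$ (so the stabilizer of $H$, realized inside ${\rm PGL}(4,{\mathbf C})$ as the stabilizer of $S$, is a zero-dimensional algebraic group, hence finite), whereas you argue via Torelli and the negative definiteness of $H^{\perp}\subset{\rm NS}(S)_{\mathbf R}$. That route does work, but be careful that your phrasing conflates faithfulness on $H^2(S,{\mathbf Z})$ with faithfulness on ${\rm NS}(S)$: the kernel of ${\rm Aut}(S)\to {\rm O}({\rm NS}(S))$ need not be trivial, so you must separately bound it (it acts faithfully on $T(S)$ as Hodge isometries fixing the period, and such groups are finite cyclic by Nikulin). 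Since you explicitly defer to the standard references for this, that part is acceptable, but the part (1) gap above needs the adjunction argument to be a proof.
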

\begin{proof} If $S \subset {\mathbf P}^3$, then $S$ is of degree $4$ and $H^{0}({\mathbf P}^3, {\mathcal O}_{{\mathbf P}^3}(1)) \simeq H^{0}(S, {\mathcal O}_{S}(1))$ under the restriction map by $H^1({\mathbf P}^3, {\mathcal O}_{{\mathbf P}^3}(n)) =0$. We also recall that $S$ has no global holomorphic vector field other than $0$. All the results follow from these three facts.
\end{proof}
\section{Proof of Theorem (\ref{main1}).}

Let 
$$S_0 := (x_0^4 -x_1^4 + x_2^4 - x_3^4 = 0) \subset {\mathbf P}^3\,\, .$$
$S_0$ is isomorphic to the Fermat quartic K3 surface and contains skew lines
$$L := (x_0 = x_1\,\, ,\,\, x_2 = x_3)\,\, ,\,\, 
M := (x_0 = - x_1\,\, , \,\, x_2 = -x_3)\,\, ,\,\, L \cap M = \emptyset\,\, .$$
As observed by the intersection matrix below, the sublattice 
${\mathbf Z}\langle [H]. [L], [M] \rangle$, where and hereafter $H$ is the hyperplane class, is primitive in $H^2(S_0, {\mathbf Z})$. 
Let $S$ be a small {\it generic} deformation of $S_0$ inside ${\mathbf P}^3$ 
such that $L, M \subset S$. Then $S \subset {\mathbf P}^3$ is a smooth quartic K3 surface containing $L$ and $M$. In Hodge theoretic terms, $S$ is a generic small deformation of $S_0$ keeping the classes $[H]$, $[L]$, $[M]$ being $(1,1)$-classes. Thus, such $S$ form a dense subset of a $17$ dimensional family, and by the primitivity mentioned above, ${\rm NS}\, (S)  = {\mathbf Z}H \oplus {\mathbf Z}L \oplus {\mathbf Z}M$. 
The intersection matrix is:
$$\left(\begin{array}{rrr}
(H^2)_S & (H.L)_S & (H.M)_S\\
(L.H)_S & (L^2)_S & (L.M)_S\\
(M.H)_S & (M.L)_S & (M^2)_S 
\end{array} \right)\,\, = \,\, \left(\begin{array}{rrr}
4 & 1 & 1\\
1 & -2 & 0\\
1 & 0 & -2 
\end{array} \right)\,\, .$$

We are goning to prove that this $S$ satisfies all the requirements of Theorem 
(\ref{main1}). In what follows, we set:
$$f := H - L\,\, ,\,\, e := H-L+M\,\, ,\,\, v := -6H + 7L -3M\,\, ,$$
$$f' := H - M\,\, ,\,\, e' := H-M+L\,\, ,\,\, v' := -6H -3L +7M\,\, ,$$ 
in ${\rm NS}\, (S)$. 

\begin{lemma}\label{lem31}
$S$ satisfies:

\begin{enumerate}
\item ${\rm NS}\, (S) = {\mathbf Z}f \oplus {\mathbf Z}f \oplus {\mathbf Z}v$.
\item $(e^2)_S = (f^2)_S = 0$, $(e,f)_S = 1$, $(v, f)_S = (v, e)_S = 0$ and $(v^2)_S = -20$.  
\item ${\rm NS}\, (S)^*/{\rm NS}\, (S) = \langle \overline{v/20} \rangle \simeq {\mathbf Z}_{20}$, 
where $\overline{x} = x\, {\rm mod}\, {\rm NS}\, (S)$ for $x \in {\rm NS}\, (S)_{{\mathbf Q}}$. 
\end{enumerate} 
The same are true for $f'$, $e'$, $v'$. 
\end{lemma}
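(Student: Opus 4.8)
The plan is to verify Lemma (\ref{lem31}) by direct computation with the intersection matrix, since everything reduces to linear algebra over $\mathbf{Z}$ once the classes $f, e, v$ are expressed in the basis $\{H, L, M\}$.

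First I would set up the bilinear form. Writing a class as a triple $(a, b, c)$ in the ordered basis $(H, L, M)$, the Gram matrix is the given matrix
$$G = \left(\begin{array}{rrr} 4 & 1 & 1\\ 1 & -2 & 0\\ 1 & 0 & -2 \end{array}\right)\,\, ,$$
so $(x.y)_S = x^t G y$. With $f = H - L = (1,-1,0)$, $e = H - L + M = (1,-1,1)$, and $v = -6H + 7L - 3M = (-6, 7, -3)$, part (2) is a finite set of quadratic-form evaluations. I would compute $Gf$, $Ge$, $Gv$ once and then read off all six pairings $(e^2), (f^2), (e.f), (v.f), (v.e), (v^2)$ by dotting; the expected values are $0, 0, 1, 0, 0, -20$. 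This also confirms that $f, e, v$ are mutually arranged so that $\{f, e\}$ spans a hyperbolic plane $U$ and $v$ is orthogonal to it.

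For part (1) I would show that the change-of-basis matrix from $(H, L, M)$ to $(f, e, v)$ is in $\mathrm{GL}(3, \mathbf{Z})$, i.e. has determinant $\pm 1$. Since $\det G = -80 \neq 0$ and the Gram matrix of $(f, e, v)$ is $\mathrm{diag}$-like with blocks $U \oplus \langle -20 \rangle$ of determinant $(-1)\cdot(-20) = 20$ — wait, this forces the change of basis to have determinant $\pm 2$, so in fact $\{f, e, v\}$ is an index-$2$ or similar overlattice issue, which I would track carefully: the correct reading of the intended claim (there is an evident typo, $\mathbf{Z}f \oplus \mathbf{Z}f \oplus \mathbf{Z}v$ should be $\mathbf{Z}f \oplus \mathbf{Z}e \oplus \mathbf{Z}v$) is that $\{f, e, v\}$ is a basis of $\mathrm{NS}(S)$. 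To confirm this I would simply invert the change-of-basis matrix and check its entries are integral, thereby certifying that $H, L, M$ lie in $\mathbf{Z}f \oplus \mathbf{Z}e \oplus \mathbf{Z}v$; combined with the reverse inclusion (immediate since $f, e, v$ are integral combinations of $H, L, M$) this gives equality of lattices.

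For part (3) I would use the decomposition $\mathrm{NS}(S) = U \oplus \mathbf{Z}v$ with $(v^2) = -20$ established above. Since $U$ is unimodular, the discriminant group $\mathrm{NS}(S)^*/\mathrm{NS}(S)$ is the discriminant group of $\langle -20\rangle$, which is cyclic of order $20$ generated by the image of $v/20$; the dual element pairs with $v$ to give $1 \in \mathbf{Z}$, confirming $\overline{v/20}$ has order $20$ and generates. Finally, the parenthetical claim for $f', e', v'$ follows by the symmetry $L \leftrightarrow M$, which fixes $H$ and preserves $G$ (swapping rows/columns $2$ and $3$), so it carries $(f, e, v)$ to $(f', e', v')$ and the entire computation transports verbatim. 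The only mild obstacle is bookkeeping around the typo and the factor-of-$2$ determinant check; once the change-of-basis matrix is confirmed unimodular, the rest is mechanical.
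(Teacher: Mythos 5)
Your plan is in substance the paper's own proof: part (2) by direct evaluation of the form, part (1) by a determinant/unimodularity argument, part (3) from the resulting splitting ${\rm NS}\,(S) \simeq U \oplus \langle -20 \rangle$ (where $U$ is the hyperbolic plane spanned by $f, e$); your $L \leftrightarrow M$ symmetry remark is also a clean way of phrasing what the paper dismisses as ``identical''. But there is one concrete error that you cannot leave as bookkeeping: $\det G = -80$ is false. Expanding along the first row,
$$\det \left(\begin{array}{rrr} 4 & 1 & 1\\ 1 & -2 & 0\\ 1 & 0 & -2\end{array}\right) \,=\, 4(4) - 1(-2) + 1(2) \,=\, 20\,\, ,$$
which equals $\det\bigl(U \oplus \langle -20 \rangle\bigr) = (-1)(-20) = 20$; positivity is forced anyway, since the signature $(1,2)$ gives determinant of sign $(-1)^2$. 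So the ``index-$2$ overlattice issue'' is a phantom created by the miscomputation, and it matters: for a full-rank sublattice $N \subset M$ one has $\det N = [M:N]^2 \det M$, so if $\det G$ really were $-80$ (or $\pm 80$) the lemma would be \emph{false}, not merely awkward to bookkeep. Equality of the two Gram determinants is exactly what forces index $1$, and this comparison is precisely the paper's proof of (1). (The paper itself writes $-20$ for both values --- a harmless sign slip, since only the equality of the two numbers is used there.)

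Your fallback procedure does settle (1) correctly and is a valid equivalent route: the change-of-basis matrix with columns $f, e, v$ in the basis $(H, L, M)$ is
$$P = \left(\begin{array}{rrr} 1 & 1 & -6\\ -1 & -1 & 7\\ 0 & 1 & -3\end{array}\right)\,\, ,\quad \det P = -1\,\, ,$$
so $P \in {\rm GL}\,(3, {\mathbf Z})$ and $\{f, e, v\}$ is a ${\mathbf Z}$-basis of ${\rm NS}\,(S)$. Note that carrying this out also exposes the arithmetic error retroactively: $\det(P^t G P) = (\det P)^2 \det G = \det G$, so the two Gram determinants could never have differed. With (1) repaired, your part (3) --- the discriminant group of $U \oplus \langle -20\rangle$ is that of $\langle -20 \rangle$, cyclic of order $20$ generated by $\overline{v/20}$ --- is correct, and is a slightly cleaner packaging of the paper's argument that $\vert {\rm NS}\,(S)^*/{\rm NS}\,(S)\vert = 20$ while $\overline{v/20}$ is an element of order $20$.
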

\begin{proof} (2) follows from explicit calculation based on the intersection matrix of $H$, $L$, $M$. We have ${\mathbf Z}\langle e, f, v \rangle \subset {\mathbf Z}\langle H, L, M \rangle$ by definition. Agan by explicit calculaion, we see that the determinant of the intersection matrix of $H, L, M$ and $f, e, v$ are both $-20$. This implies (1). Since the determinant of ${\rm NS}\, (S)$ is 
$-20$, we have $\vert {\rm NS}\, (S)^*/{\rm NS}\, (S) \vert = 20$ by elementary divisor theory. By (2), $\overline{v/20} \in {\rm NS}\, (S)^*/{\rm NS}\, (S)$ and it is of order $20$. This implies (3). The proof for $f'$, $e'$, $v'$ is identical.
\end{proof}

Let $* : {\rm Aut}\, (S) \to {\rm O}\, ({\rm NS}\, (S))$ be the natural contravariant group homomorphism.

\begin{lemma}\label{lem32}
$S$ satisfies:

\begin{enumerate}
\item Let $g \in {\rm Aut}\, (S)$. Then $g^*\sigma_S = \pm \sigma_S$. Moreover 
$g^*\sigma_S = \sigma_S$ (resp. $g^*\sigma_S = -\sigma_S$) if and only if $g^*\vert {\rm NS}\,(S)^*/{\rm NS}\, (S) = id$ (resp. $g^*\vert {\rm NS}\,(S)^*/{\rm NS}\, (S) = -id$).
\item The map $*$ is injective.  
\end{enumerate} 
\end{lemma}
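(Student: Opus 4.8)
The statement has two parts. For part (1), the plan is to use the fact that $H^0(S,\Omega_S^2) = \mathbf{C}\sigma_S$ spans a one-dimensional space, and that any $g \in \mathrm{Aut}\,(S)$ pulls back $\sigma_S$ to another nowhere-vanishing holomorphic $2$-form, hence $g^*\sigma_S = \lambda \sigma_S$ for some $\lambda \in \mathbf{C}^*$. The key constraint is that $g^*$ preserves $\mathrm{NS}\,(S)$ and therefore preserves its orthogonal complement $T(S)$ in $H^2(S,\mathbf{Z})$, acting as an isometry of the integral lattice $T(S)$. Since $\sigma_S \in T(S)_{\mathbf{C}}$ is an eigenvector of $g^*|_{T(S)}$ with eigenvalue $\lambda$, and $g^*|_{T(S)}$ has finite order on the discriminant group while being of finite order as a lattice isometry is not immediate, the cleanest route is: $\lambda$ is an eigenvalue of an integral matrix, so it is an algebraic integer, and all its Galois conjugates also appear as eigenvalues. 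Because $T(S)$ here has small rank (rank $17$, but the relevant structure is the complementary discriminant), I would instead argue via the discriminant form. The decisive point is that $T(S)$ carries a weight-two Hodge structure with $h^{2,0}=1$, and a standard argument (the order of $g^*|_{T(S)}$ divides the order of $g^*$ on the discriminant group $T(S)^*/T(S)$, which is finite) forces $\lambda$ to be a root of unity whose Euler totient divides $\mathrm{rk}\,T(S)$; combined with the generic nature of $S$, the only possibilities are $\lambda = \pm 1$.

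More concretely, I would exploit that $S$ is a \emph{generic} deformation. By construction $\mathrm{NS}\,(S)$ has rank $3$, so $T(S)$ has rank $19$. For a generic such $S$, the Hodge group (Mumford--Tate group) of $T(S)$ is as large as possible, which forces any Hodge isometry of $T(S)$ scaling $\sigma_S$ to scale it by $\pm 1$: indeed the image of $*$ restricted to $T(S)$ must commute with the generic Mumford--Tate action, and the centralizer meeting $\mathrm{O}(T(S))$ is just $\{\pm \mathrm{id}\}$. This gives $g^*\sigma_S = \pm\sigma_S$. For the refinement identifying the sign with the action on the discriminant group, I would use the canonical isomorphism $\mathrm{NS}\,(S)^*/\mathrm{NS}\,(S) \simeq T(S)^*/T(S)$ recalled in the Notation section, which is compatible with the $\mathrm{Aut}\,(S)$-action. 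Since $g^*|_{T(S)} = \pm\mathrm{id}$ implies $g^*$ on $T(S)^*/T(S)$ is $\pm\mathrm{id}$, and the isomorphism transports this to $\mathrm{NS}\,(S)^*/\mathrm{NS}\,(S)$, the two sides match; conversely $g^*|_{T(S)} = \varepsilon \cdot \mathrm{id}$ (with $\varepsilon = \pm 1$) is forced once $g^*$ on the discriminant is $\varepsilon\cdot\mathrm{id}$, because $\varepsilon\,\mathrm{id}$ and $g^*|_{T(S)}$ then agree modulo $T(S)$ and both scale $\sigma_S$, and genericity pins down $g^*|_{T(S)}$ among lattice isometries inducing a given sign on the discriminant.

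For part (2), injectivity of $*$, the plan is to show $\ker(*)$ is trivial. If $g \in \ker(*)$ then $g^*|_{\mathrm{NS}\,(S)} = \mathrm{id}$, so in particular $g^*$ acts trivially on $\mathrm{NS}\,(S)^*/\mathrm{NS}\,(S)$, whence by part (1) $g^*\sigma_S = \sigma_S$. Thus $g^*$ acts as the identity on $\mathrm{NS}\,(S)$ and fixes $\sigma_S$; I would then invoke the global Torelli theorem for K3 surfaces together with the fact that an automorphism acting trivially on $H^2(S,\mathbf{Z})$ must be the identity. The remaining gap is that $g^* = \mathrm{id}$ on $\mathrm{NS}\,(S)$ and $g^*\sigma_S=\sigma_S$ do not immediately give $g^*=\mathrm{id}$ on all of $T(S)$; here again genericity is the lever: for a generic member of this family the only Hodge isometry of $T(S)$ fixing $\sigma_S$ is the identity, so $g^*=\mathrm{id}$ on $H^2(S,\mathbf{Z})$ and Torelli yields $g = \mathrm{id}_S$.

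The main obstacle will be making the genericity argument in part (1) rigorous, namely controlling the possible scalars $\lambda$ coming from a lattice isometry of the rank-$19$ transcendental lattice. The cleanest formulation is probably not via Mumford--Tate groups but via the elementary observation that $g^*|_{T(S)}$ has finite order on the finite discriminant group, combined with the constraint that its eigenvalue $\lambda$ on $\mathbf{C}\sigma_S$ is a root of unity of order dividing the order of $g$ on the discriminant, forcing $\lambda \in \{\pm 1\}$ once one checks no higher root of unity is compatible with the rank and discriminant computed in Lemma~\ref{lem31}. I expect the sign-matching equivalence in part (1) to be the most delicate bookkeeping, since it requires tracking the induced action on the discriminant group through the isomorphism of Nikulin simultaneously on both the N\'eron--Severi and transcendental sides.
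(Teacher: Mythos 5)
There is a genuine gap in your part (1), at precisely the step you yourself call decisive. Your final (``cleanest'') mechanism for forcing $\lambda=\pm1$ rests on the claim that the order of $g^*|_{T(S)}$ divides the order of the induced action on the discriminant group $T(S)^*/T(S)$. That claim is false in general: an isometry can act with nontrivial order on a lattice while acting trivially on its discriminant group (the swap of the two basis vectors of the hyperbolic plane acts trivially on the trivial discriminant group; $-{\rm id}$ on any $2$-elementary lattice acts trivially on its discriminant group). The correct route, which is what the paper cites Nikulin \cite{Ni80} for, is: the image of ${\rm Aut}\,(S)\to {\rm O}(T(S))$ is finite (it preserves the positive-definite $2$-plane spanned by ${\rm Re}\,\sigma_S$, ${\rm Im}\,\sigma_S$, hence lies in a compact group, and it is discrete) and cyclic (the eigenvalue map $g^*\mapsto\lambda(g)$ is injective because the fixed sublattice of any $g^*$ with $\lambda(g)=1$ is primitive, contains $\sigma_S$ in its complexification, and so equals $T(S)$ by minimality); then, if $\lambda$ is a primitive $m$-th root of unity, $T(S)_{\mathbf Q}$ becomes a ${\mathbf Q}(\zeta_m)$-vector space, so $\varphi(m)$ divides ${\rm rank}\,T(S)=19$; since $\varphi(m)$ is even for every $m\ge 3$, the \emph{oddness} of $19$ forces $m\le 2$. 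You never use this parity of ${\rm rank}\,T(S)$, which is the entire content of the paper's one-line argument, and ``genericity'' cannot substitute for it: the lemma holds for every K3 with this rank-$3$ N\'eron--Severi lattice, and nothing beyond ${\rm rank}\,{\rm NS}\,(S)=3$ enters. (Your first sketch via maximality of the Mumford--Tate group of $T(S)$ could be made to work for a very general member, but you abandon it, and it would in any case require the nontrivial fact that the Hodge group is the full orthogonal group.)

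Two further points. The ``if and only if'' in (1) is not mere bookkeeping: it needs the fact that $+{\rm id}\ne -{\rm id}$ on $T(S)^*/T(S)$, which holds here because this group is ${\mathbf Z}_{20}$ (via Lemma~(\ref{lem31})(3) and the isomorphism in the Notation) and $20>2$; if the discriminant group were $2$-elementary, both signs would act trivially and the equivalence would be false. Your converse instead appeals once more to genericity (``genericity pins down $g^*|_{T(S)}$ among isometries inducing a given sign''), which is circular, whereas the paper's point is simply that the two possibilities $\pm{\rm id}$ on $T(S)$ induce distinct maps on ${\mathbf Z}_{20}$. Finally, in part (2) the step you flag as a ``remaining gap'' is not one: for \emph{every} projective K3 surface, a Hodge isometry of $T(S)$ fixing $\sigma_S$ is the identity on $T(S)$, because its fixed sublattice is a primitive sublattice of $H^2(S,{\mathbf Z})$ whose complexification contains $\sigma_S$, and $T(S)$ is by definition minimal with this property. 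No genericity is needed there; once (1) is proved in the paper's form, trivial action on the discriminant group already yields $g^*|_{T(S)}={\rm id}$, hence $g^*={\rm id}$ on $H^2(S,{\mathbf Z})$, and the global Torelli theorem finishes, exactly as you say.
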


\begin{proof} Since $S$ is projective and ${\rm rank}\, T(S) = 19$ is odd, it follows that $g^*\sigma_S = \pm \sigma_S$ by Nikulin (\cite{Ni80}). Thus $g^* \vert T(S) = \pm id$ (\cite{Ni80}). Since there are only two cases above and $T(S)^*/T(S) = {\mathbf Z}_{20}$, it follows that $g^*\sigma_S = \pm \sigma_S$ is equivalent to $g^* \vert T(S)^*/T(S) = \pm id$ respectively. This implies (1) via the natural isomorphism explained in Notation. 
If $g^* = id$ on ${\rm NS}\, (S)$, then $g^*\vert {\rm NS}\, (S)^*/{\rm NS}\, (S) = id$. Thus $g^{*}\vert T(S) = id$ by (1). Hence 
$g^* \vert H^2(S, {\mathbf Z}) = id$. The result (2) now follows from 
the golobal Torelli theorem for K3 surfaces. 
\end{proof}

Planes $P$ such that $L \subset P \subset {\mathbf P}^3$ form
a linear pencil $\{P_t \vert t \in {\mathbf P}^1\}$. Put $h_t := P_t \vert S \in \vert H \vert$. 
Then $P_t \cap S = L \cup E_t$ where $E_t \subset P_t \simeq {\mathbf P}^2$ is 
a plane cubic curve such that $(E_t^2)_S = 0$ and $(E_t.M)_S = 1$.  Hence $\{ E_t\, \vert \, t \in {\mathbf P}^1 \}$ defines an elliptic 
fibration on $S$ with section $M$: 
$$\Phi_1 := \Phi_{\vert H - L \vert} : S \to {\mathbf P}^1\,\, .$$
We regard $M$ as the zero section. 
We denote by ${\rm Aut}\, (\Phi_1)$ the subgroup of ${\rm Aut}\, (S)$ which preserves the fibration $\Phi_1$, i.e., the set of $g \in {\rm Aut}\, (S)$ for which there is $\overline{g} \in {\rm Aut}\, ({\mathbf P}^1)$ such that $\Phi_1 \circ g = \overline{g} \circ \Phi_1$. Let $\iota_1$ be the inversion of $\Phi_1$ and ${\rm MW}(\Phi_1)$ be the Mordell-Weil group of $\Phi_1$, i.e., the group of translations by sections of $\Phi_1$.
Then ${\rm MW}(\Phi_1) \subset {\rm Aut}\, (\Phi_1)$ and $\iota_1 \in {\rm Aut}\, (\Phi_1)$. 

Similarly, we have an elliptic fibration  
$$\Phi_2 := \Phi_{\vert H - M \vert} : S \to {\mathbf P}^1\,\, ,$$
with section $L$. Regarding $L$ as the zero section of $\Phi_2$, we define ${\rm Aut}\, (\Phi_2)$, ${\rm MW}(\Phi_2)$ and the inversion $\iota_2$ of $\Phi_2$ similarly.

\begin{proposition}\label{lem33}
$S$ satisfies:

\begin{enumerate}
\item $\Phi_1$ has no reducible fiber.
\item
The set of global sections of $\Phi_1$ (in ${\rm NS}\, (S)$) is
$$\{10n^2 f + e + nv\, \vert\, n \in {\mathbf Z}\}\,\, .$$
\item
With respect to the basis $\langle f, e, v\rangle$ of 
${\rm NS}\, (S)$, 
$$\varphi_{n}^* = \left(\begin{array}{rrr}
1 & 10n^2 & 20n\\
0 & 1 & 0\\
0 & n & 1 
\end{array} \right)\,\, ,\,\, \iota_1^* = \,\, \left(\begin{array}{rrr}
1 & 0 & 0\\
0 & 1 & 0\\
0 & 0 & -1 
\end{array} \right)\,\, ,$$
where $\varphi_n^{-1} \in {\rm MW}\, (\Phi_1)$ is the element defined by the translation by $C_n := 10n^2 f + e + nv$. 
In particular, ${\rm MW}\, (\Phi_1) = \langle \varphi_1 \rangle 
\simeq {\mathbf Z}$. 
\end{enumerate}

The same are true for $\Phi_2$ if we replace the basis $\langle f, e, v\rangle$ of ${\rm NS}\, (S)$ to the basis $\langle f', e', v'\rangle$ of 
${\rm NS}\, (S)$. 
\end{proposition}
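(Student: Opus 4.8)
The engine of the proof is the theory of elliptic fibrations on K3 surfaces, in particular the Shioda--Tate formula and the interplay between the trivial lattice, the Mordell--Weil group, and the discriminant form. By Lemma (\ref{lem31}) the sublattice $U := {\mathbf Z}e \oplus {\mathbf Z}f$ is a hyperbolic plane (even, unimodular, of signature $(1,1)$); it is spanned by the fiber class $f = H-L$ and the zero section $M = e - f$, and ${\rm NS}\,(S) = U \oplus {\mathbf Z}v$ with $v \perp U$ and $(v^2)_S = -20$. I would treat $U$ as the trivial lattice of $\Phi_1$ (legitimate once (1) is proved), argue first about the fiber configuration, then read off the sections, and finally compute the isometries induced by ${\rm MW}(\Phi_1)$ and $\iota_1$.

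For (1) I would invoke the Shioda--Tate formula $\rho(S) = 2 + \sum_v (m_v - 1) + {\rm rank}\, {\rm MW}(\Phi_1)$, where $m_v$ is the number of irreducible components of the fiber over $v$. Since $\rho(S) = 3$ and ${\rm rank}\, {\rm MW}(\Phi_1) \ge 0$, we get $\sum_v (m_v - 1) \le 1$, so the only alternative to ``no reducible fiber'' is a single fiber with exactly two components, necessarily of Kodaira type $I_2$ or $III$, contributing a root lattice $A_1 = \langle -2\rangle$. In that case the trivial lattice would be $T = U \oplus A_1$ of discriminant $|\det U|\cdot 2 = 2$; but $T \subseteq {\rm NS}\,(S)$ of finite index forces $|\det T| = [{\rm NS}\,(S):T]^2\,|\det {\rm NS}\,(S)| \ge 20$, contradicting $|\det T| = 2$. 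Hence there is no reducible fiber, $\sum_v(m_v-1) = 0$, and ${\rm rank}\,{\rm MW}(\Phi_1) = 1$; moreover ${\rm MW}(\Phi_1) \simeq {\rm NS}\,(S)/U \simeq {\mathbf Z}$, torsion free, generated by the image of $v$. (K3 elliptic fibrations have no multiple fibers, so all remaining fibers are the irreducible Kodaira types, which causes no difficulty.)

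For (2) I would describe the sections by their classes. A section is a smooth rational curve $C$ with $(C.f)_S = 1$, hence a $(-2)$-class by adjunction. Writing $C = a f + b e + c v$, the condition $(C.f)_S = 1$ forces $b = 1$, and $(C^2)_S = -2$ then determines $a$ as an integral quadratic function of $c$; this yields exactly the asserted one-parameter family. Conversely each such class is effective (by Riemann--Roch, since it pairs positively with the nef class $f$), and, decomposing it as a horizontal section plus a vertical part which by (1) can only be a multiple $k f$ of the fiber, the self-intersection $-2$ forces $k = 0$, so the class is represented by a single section. The resulting bijection with ${\rm MW}(\Phi_1) \simeq {\mathbf Z}$ matches the parameter $n$.

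For (3) I would use that a translation $\varphi_n$ by a section is a symplectic automorphism, while the inversion $\iota_1$ is anti-symplectic. By Lemma (\ref{lem32}) and the compatibility of the ${\rm Aut}(S)$-action with the discriminant form (Notation), $\varphi_n^*$ acts trivially and $\iota_1^*$ acts as $-{\rm id}$ on ${\rm NS}\,(S)^*/{\rm NS}\,(S) \simeq {\mathbf Z}_{20}$. For $\iota_1$ these constraints --- fixing $f$ and $M$ (hence $e$) and acting by $-1$ on the discriminant group --- immediately give $\iota_1^* = {\rm diag}(1,1,-1)$ in the basis $\langle f, e, v\rangle$. For $\varphi_1$ the constraints are: it is an isometry fixing $f$, sending the zero section $M$ to the generating section $P_1$ (equivalently fixing the value of $\varphi_1^*(e)$ with $e = M + f$), and acting trivially on the discriminant group; solving these linear equations pins down $\varphi_1^*$ uniquely, and then $\varphi_n^* = (\varphi_1^*)^n$ gives the stated matrix, whence ${\rm MW}(\Phi_1) = \langle \varphi_1\rangle \simeq {\mathbf Z}$. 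The statements for $\Phi_2$ follow verbatim after the substitution $(f,e,v) \leftrightarrow (f',e',v')$, which is induced by the symmetry $L \leftrightarrow M$ of the whole configuration. The main obstacle is step (1): everything downstream is essentially linear algebra in ${\rm NS}\,(S)$, but establishing that $\Phi_1$ has no reducible fiber and that ${\rm MW}$ is infinite cyclic --- i.e. correctly locating $\Phi_1$ in the Shioda--Tate/discriminant dichotomy --- is where the real content lies.
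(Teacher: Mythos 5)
Your proposal is correct in substance and, for parts (2) and (3), follows essentially the same route as the paper: section classes are pinned down in ${\rm NS}\,(S)$ by $(C.f)_S=1$ and $(C^2)_S=-2$; effectivity of each such class follows from Riemann--Roch together with positivity against the nef class $f$; the vertical part of the decomposition is killed using (1); and the matrices are determined by the images of $f$ and $e$ plus the sign of the image of $v$, which is fixed by the action on ${\rm NS}\,(S)^*/{\rm NS}\,(S)\simeq {\mathbf Z}_{20}$ (for $\iota_1$ the paper fixes this sign by injectivity of $*$, Lemma (\ref{lem32})(2), rather than by anti-symplecticity of the inversion; both work). Part (1) is where you genuinely differ. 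The paper argues directly: a reducible fiber would contain a component $x$ with $(x.f)_S=(x.M)_S=0$ and $(x^2)_S=-2$, and a short computation shows no such class exists in ${\rm NS}\,(S)$. You instead invoke the Shioda--Tate formula and exclude the only alternative (a single two-component fiber, with trivial lattice $U\oplus A_1$, $U={\mathbf Z}e+{\mathbf Z}f$) by the discriminant relation $\vert\det(U\oplus A_1)\vert=[{\rm NS}\,(S):U\oplus A_1]^2\cdot\vert\det{\rm NS}\,(S)\vert$, whose left side is $2$ while the right side is at least $20$. Both are sound; your route imports more machinery but immediately yields ${\rm MW}(\Phi_1)\simeq {\rm NS}\,(S)/U\simeq{\mathbf Z}$, torsion-freeness included, which the paper only recovers at the end of (3) from the explicit matrices and the induction $\varphi_n=\varphi_1^n$.

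One warning, which concerns your write-up and the statement alike. With the paper's definition $e=H-L+M$ one has $(e^2)_S=0$ and $[M]=e-f$, so your computation in (2), carried out literally, gives $(C^2)_S=2a-20c^2=-2$, hence $a=10c^2-1$: the section classes are $(10n^2-1)f+e+nv=10n^2f+[M]+nv$, whereas the displayed classes $10n^2f+e+nv$ have square $0$ and cannot be classes of sections. So your sentence ``this yields exactly the asserted one-parameter family'' would not survive actually doing the computation; the asserted family is correct only when $e$ is read as the class of the zero section $M$. This is an inconsistency you inherited rather than created --- the paper's own proof makes the same slip, writing $2x-2-20n^2=-2$ as if $(e^2)_S$ were $-2$ --- and your identification $[M]=e-f$ is the correct one, so a careful execution of your argument would detect and repair it. Note also that the matrices in (3) are genuine isometries for $(e^2)_S=0$ and are consistent with the corrected family, since $\varphi_n^*[M]=\varphi_n^*(e-f)=(10n^2-1)f+e+nv$ is exactly the $n$-th section class.
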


\begin{proof} $f$ is the class of fiber and $M$ is the zero section of $\Phi_1$. Observe that there is no $x \in {\rm NS}\, (S)$ such that 
$$(M.x)_S = (f.x)_S = 0\,\, ,\,\, (x^2)_S = -2\,\, .$$
In fact, writing $x = af + be + cv$, where $a, b, c \in {\mathbf Z}$, and 
substituting this and $e = M +f$ into the first two equations, we obtain that 
$a = b = 0$. Hence  $x = cv$. Then the last condition implies 
$c = 1/\sqrt{10} \not\in {\mathbf Z}$. Thus, there are no ${\mathbf P}^1$ in fibers of $\Phi_1$ and the result (1) follows.

Let $C$ be a section of $\Phi_1$. Then 
$$C = xf + ye + nv\,\, --- (*)$$ in ${\rm NS}\,(S)$ 
for some $x, y, n \in {\mathbf Z}$. Since $f$ is the class of fiber of $\Phi_1$, it follows that $(C.f)_S = 1$. Hence $y = 1$. Substituting into (*), we obtain $C = xf + e + nv$. Since $C \simeq {\mathbf P}^1$, it follows that $(C^2)_S = -2$, i.e., $2x -2 - 20n^2 = -2$. Hence $x = 10n^2$. Therefore $C = 10n^2 f + e + nv$ for some $n \in {\mathbf Z}$. Conversely, Let 
$c = 10n^2 f + e + nv \in {\rm NS}\, (S)$ with $n \in {\mathbf Z}$. We have 
$(c^2)_S = -2$. Hence either $c$ or $-c$ is represented by an effective curve. 
Since $(c.f)_S = 1$, it is $c$. Set $[C_0 + A] = c$, where $C_0$ and $A$ are effective curves (possibly $0$) such that $C_0$ is irreducible and $(C_0.f)_S \not= 0$. Since $(c.f)_S = 1$ and $f$ is nef, it follows that $(C_0.f)_S = 1$ and $(A.f)_S = 0$. Since $C_0$ is irreducible, it folows that $C_0$ is a section of $\Phi_1$. Moreover, $A$ is in fibers by $(A.f)_S = 0$. Since $\Phi_1$ has no reducible fiber, it follows that $A = mf$ in ${\rm NS}\, (S)$ for some $m \in {\mathbf Z}$. We calculate that
$$-2 = (c^2)_S = (C_0^2)_S + 2m(C_0.f)_S = -2 + 2m\,\, .$$
Hence $m=0$. Thus $A = 0$ as divisors. Hence $c = [C_0]$. This proves (2).

Let us show (3). We have $\iota_1^*f = f$ and $\iota_1^*e = e$. Hence 
$\iota_1^*v = -v$ by $\iota_1 \not= id_S$ and Lemma (\ref{lem32}). We have 
$$\varphi_n^*(f) = f\,\, ,\,\, \varphi_n^*(e) = C_n = 10n^2 f + e + nv\,\, .$$
Put $\varphi_n^*(v) = xf + ye + zv$ where $x, y, z \in {\mathbf Z}$. 
By substituting this into
$$(\varphi_n^*(f).\varphi_n^*(v))_S = (f.v)_S = 0\,\, ,\,\,  (\varphi_n^*(e).\varphi_n^*(v))_S = (e.v)_S = 0\,\, ,\,\, (\varphi_n^*(v)^2)_S = (v^2)_S = -20\,\, ,$$
we obtain $\varphi_n^*(v) = \pm (20nf +v)$. Since $\varphi_n^*\sigma_S = \sigma_S$, it follows that $\varphi_n^* \vert {\rm NS}\, (S)^*/{\rm NS}\,(S) = id$ 
by Lemma (\ref{lem32})(1). Thus $\varphi_n^*(v) = 20nf +v$. 
By the explicit form of the matrix and using induction on $\pm n$, one can see that $\varphi_n = (\varphi_1)^n$. This proves (3).

The proof for $\Phi_2$ is identical.
\end{proof}

\begin{proposition}\label{lem34}
For each $i=1$, $2$, we have:
$${\rm Aut}\, (\Phi_i) = {\rm MW}\, (\Phi_i) \cdot \langle \iota_i \rangle 
\simeq {\mathbf Z} \cdot {\mathbf Z}_2\,\, .$$
Here and hereafter for groups $A$ and $B$, the group $A \cdot B$ is the semi-direct product in which $A$ is normal. 
\end{proposition}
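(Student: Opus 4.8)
The plan is to show that $\mathrm{Aut}(\Phi_i)$ is generated by $\mathrm{MW}(\Phi_i)$ and $\iota_i$, and that their product is the asserted semi-direct product. Fix $i=1$ (the case $i=2$ is identical after swapping the bases). First I would recall the standard structure theory for automorphisms of an elliptic fibration with a chosen zero section $M$: any $g \in \mathrm{Aut}(\Phi_1)$ induces $\overline{g} \in \mathrm{Aut}(\mathbf{P}^1)$ on the base, and after composing with a suitable translation $\varphi \in \mathrm{MW}(\Phi_1)$ we may assume $g$ fixes the zero section $M$ and hence restricts to a group automorphism on each fiber. For such a $g$, the restriction to a general fiber is an automorphism of an elliptic curve fixing the origin, so up to the inversion $\iota_1$ it acts as a finite-order ``extra'' automorphism (multiplication by a root of unity). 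The crux is to rule out any such extra automorphism of order $\ge 3$, which will force $\mathrm{Aut}(\Phi_1) = \mathrm{MW}(\Phi_1) \cdot \langle \iota_1 \rangle$.

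The key arithmetic input is Lemma (\ref{lem32}): the representation $*$ is injective, and the action on the discriminant group $\mathrm{NS}(S)^*/\mathrm{NS}(S) \simeq \mathbf{Z}_{20}$ detects the sign of the action on $\sigma_S$. So my strategy is to translate the problem entirely into the lattice $\mathrm{NS}(S) = \mathbf{Z}f \oplus \mathbf{Z}e \oplus \mathbf{Z}v$ and compute which isometries can arise from $\mathrm{Aut}(\Phi_1)$. After reducing to $g$ fixing $M$ (hence fixing $f = [\text{fiber}]$ and fixing the zero section, so $g^*e = e$), the argument of Proposition (\ref{lem33})(3) applies verbatim: the conditions $(g^*v.f)_S = (g^*v.e)_S = 0$ and $(g^*v)^2_S = -20$ force $g^*v = \pm(kf + v)$ for some integer $k$. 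If $g^*\sigma_S = \sigma_S$ then $g^* = \mathrm{id}$ on the discriminant group, forcing the $+$ sign and $g^*v = kf + v$; one then checks $k$ must be a multiple of $20$ so that $g^* \in \langle \varphi_1^* \rangle$, whence $g \in \mathrm{MW}(\Phi_1)$ by injectivity of $*$. If instead $g^*\sigma_S = -\sigma_S$, then $g^*$ agrees with $\iota_1^*$ on the discriminant group, and $g \iota_1$ falls into the previous case, so $g \in \mathrm{MW}(\Phi_1)\cdot\langle \iota_1\rangle$.

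I expect the main obstacle to be handling potential extra automorphisms of the generic fiber, i.e.\ verifying that no $g$ acts on $\sigma_S$ by a primitive root of unity of order $\ge 3$. The clean way around this is exactly Lemma (\ref{lem32})(1): since $\mathrm{rank}\,T(S) = 19$ is odd, Nikulin's theorem already restricts $g^*\sigma_S$ to $\pm\sigma_S$, so no order-$3$, $4$, or $6$ fiber automorphism can occur globally, and the ``extra automorphism'' case is vacuous before it arises. This is where the oddness of the transcendental rank does the real work. It then remains to assemble the semi-direct product structure: $\mathrm{MW}(\Phi_1)$ is normal in $\mathrm{Aut}(\Phi_1)$ because conjugating a translation by any fibration-preserving automorphism is again a translation (translations are characterized as the fixed-point-free, base-preserving, section-preserving-up-to-translation elements), and $\iota_1$ has order $2$ with $\iota_1 \varphi_1 \iota_1 = \varphi_1^{-1}$, which is immediate from the matrix computation $\iota_1^* \varphi_n^* \iota_1^* = \varphi_{-n}^*$ in Proposition (\ref{lem33})(3). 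Since $\mathrm{MW}(\Phi_1) \cap \langle \iota_1\rangle = \{\mathrm{id}\}$ (as $\iota_1^* v = -v \neq v$ forces $\iota_1 \notin \mathrm{MW}(\Phi_1)$ by injectivity of $*$), this yields $\mathrm{Aut}(\Phi_1) = \mathrm{MW}(\Phi_1)\cdot\langle\iota_1\rangle \simeq \mathbf{Z}\cdot\mathbf{Z}_2$.
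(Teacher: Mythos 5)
Your proposal is correct and takes essentially the same route as the paper's proof: reduce modulo $\mathrm{MW}(\Phi_1)$ so that $g$ fixes the zero section $M$ (hence $g^*e=e$, $g^*f=f$), observe that the lattice conditions then force $g^*v=\pm v$ (your intermediate form $\pm(kf+v)$ is immediately pinned down to $k=0$ by your own condition $(g^*v.e)_S=0$, so the ``$k$ divisible by $20$'' step is redundant), compose with $\iota_1$ to reach $g^*=\mathrm{id}$ and conclude $g=\mathrm{id}_S$ by injectivity of $*$ (Lemma (\ref{lem32})(2)), and finally get normality of $\mathrm{MW}(\Phi_1)$ by the conjugation-of-translations remark. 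The discussion of potential order $\ge 3$ fiber automorphisms, which you correctly note is made vacuous by Lemma (\ref{lem32})(1), is likewise implicit in the paper's argument.
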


\begin{proof} We only prove for $i=1$. Proof for $i=2$ is identical.
We already observed that $\langle {\rm MW}(\Phi_1), \iota_1 \rangle \subset {\rm Aut}\, (\Phi_1)$. Let $g \in {\rm Aut}\, (\Phi_1)$. We want to show that $g  \in \langle {\rm MW}(\Phi_1), \iota_1 \rangle$. We have $g^*(H-L) = H-L$. By composing ${\rm MW}(\Phi_1)$, 
we may and will assume that $g^*M = M$. Hence
$$g^*e = e\,\, ,\,\, g^*f = f\,\, .$$ 
Thus $g^*v = \pm v$. As we observed, $\iota_1^*e = e$, $\iota_1^*f = f$ and 
$\iota_1^*v = -v$. Thus by composing $\iota_1$ if necessary, we may and will assume that $g^*v = v$. Then $g = id_S$ by Lemma (\ref{lem32})(2). Thus ${\rm Aut}\, (\Phi_1) = \langle {\rm MW}(\Phi_1), \iota_1 \rangle$. If $g \in {\rm MW}(\Phi_1)$, then $\iota_1 \circ g \circ \iota_1^{-1} \in {\rm MW}(\Phi_1)$. In fact, if $g$ is the translation defined by a section $C$, then $\iota_1 \circ g \circ \iota_1^{-1}$ is the translation defined by $\iota_1(C)$. Thus 
$g \in {\rm MW}(\Phi_1) \cdot \langle
\iota_1 \rangle$. Since $\iota_1$ is the inversion, we have $\langle \iota_1 \rangle \simeq {\mathbf Z}_2$ and ${\rm MW}(\Phi_1) \simeq {\mathbf Z}$ by 
Proposition (\ref{lem33})(2). This completes the proof.
\end{proof}

From now on, we set $\iota_3 := \iota_1 \circ \varphi_1$. 

\begin{corollary}\label{lem35}
With respect to the basis $\langle f, e, v\rangle$ of 
${\rm NS}\, (S)$, 
$$\iota_3^* = \left(\begin{array}{rrr}
1 & 10 & -20\\
0 & 1 & 0\\
0 & 1 & -1 
\end{array} \right)\,\, .$$
In particular, $\iota_3$ is also an involution.
\end{corollary}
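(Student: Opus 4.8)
The plan is to exploit the definition $\iota_3 = \iota_1 \circ \varphi_1$ together with the fact that $* : {\rm Aut}\,(S) \to {\rm O}\,({\rm NS}\,(S))$ is a \emph{contravariant} homomorphism, so that
$$\iota_3^* = (\iota_1 \circ \varphi_1)^* = \varphi_1^* \circ \iota_1^*\,\, .$$
Both factors on the right are already known explicitly: $\varphi_1^*$ is the matrix of Proposition (\ref{lem33})(3) specialized to $n = 1$, namely the matrix with rows $(1,10,20)$, $(0,1,0)$, $(0,1,1)$, while $\iota_1^* = {\rm diag}\,(1,1,-1)$ is the second matrix displayed there. So the entire content of the first assertion is a single $3 \times 3$ matrix product, and the answer is forced once the two inputs and the order of multiplication are fixed.

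First I would carry out the product $\varphi_1^* \cdot \iota_1^*$ in the basis $\langle f, e, v \rangle$. Since $\iota_1^*$ merely negates the third column, one reads off that $\varphi_1^* \iota_1^*$ has the same first two columns as $\varphi_1^*$ and third column equal to the negative of the third column of $\varphi_1^*$; this yields exactly the claimed matrix with rows $(1,10,-20)$, $(0,1,0)$, $(0,1,-1)$. I expect the one genuine pitfall here to be the order of composition: composing in the wrong order $\iota_1^* \circ \varphi_1^*$ gives a different matrix (third row $(0,-1,-1)$), so it is essential to record that contravariance reverses the order. This is the only point requiring care; the arithmetic itself is routine.

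For the final clause, the cleanest route is to verify directly that $(\iota_3^*)^2$ equals the identity matrix, which is again a short computation on the explicit matrix just obtained. Granting that, injectivity of $*$ (Lemma (\ref{lem32})(2)) immediately upgrades the relation $(\iota_3^*)^2 = {\rm id}$ on ${\rm NS}\,(S)$ to $\iota_3^2 = id_S$ in ${\rm Aut}\,(S)$, so $\iota_3$ is an involution. Alternatively, one could note that $\iota_3 = \iota_1 \circ \varphi_1$ is the composition of the inversion $\iota_1$ with the translation $\varphi_1 \in {\rm MW}(\Phi_1)$, and any such composition is itself an inversion (about the shifted origin) of the genus-one fibration $\Phi_1$, hence of order two; but since the matrix of $\iota_3^*$ is already in hand, the direct squaring check together with the injectivity of $*$ is the most economical argument.
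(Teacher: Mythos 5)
Your proof is correct and, for the matrix assertion, coincides with the paper's: both use contravariance of $*$ to write $\iota_3^* = (\iota_1 \circ \varphi_1)^* = \varphi_1^* \circ \iota_1^*$ and then read the product off from Proposition (\ref{lem33})(3); your warning about the order of composition is exactly the right point of care. The only divergence is in the final clause. The paper deduces that $\iota_3$ is an involution from the semi-direct product structure of ${\rm Aut}\,(\Phi_1) = {\rm MW}\,(\Phi_1) \cdot \langle \iota_1 \rangle$ (conjugation by the inversion sends the translation $\varphi_1$ to $\varphi_1^{-1}$, so $(\iota_1 \circ \varphi_1)^2 = id_S$), which is precisely the alternative you sketch at the end, whereas your primary argument squares the explicit matrix and invokes the injectivity of $*$ from Lemma (\ref{lem32})(2). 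Both are complete: your route is a self-contained check that doubles as a verification of the computed entries, while the paper's is basis-free and does not depend on the matrix arithmetic at all.
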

\begin{proof} Since $\iota_3^* = (\iota_1 \circ \varphi_1)^* = \varphi_1^* \circ \iota_1^*$, the result follows from Proposition (\ref{lem33})(3). The fact that 
$\iota_3$ is an involution also follows from the semi-direct product structure.
\end{proof}

\begin{corollary}\label{lem36}
$\iota_1^*$, $\iota_2^*$ and $\iota_3^*$ are (orthogonal) reflections with respect to the hyperplanes orthgonal to the $-20$-elements, $-6H + 7L -3M$, $-6H -3L +7M$ and $4H -3L -3M$ respectively.
\end{corollary}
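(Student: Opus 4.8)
The plan is to compare, in the common basis $\langle f, e, v\rangle$ (resp. $\langle f', e', v'\rangle$), the matrix of each $\iota_i^*$ already computed above with the matrix of the orthogonal reflection in the hyperplane orthogonal to the claimed $-20$-element. Recall that for a vector $w$ with $(w^2)_S = -20$ the reflection $s_w \in {\rm O}\,({\rm NS}\,(S))$ is
$$s_w(x) = x - \frac{2(x.w)_S}{(w^2)_S}\, w = x + \frac{(x.w)_S}{10}\, w\,\, .$$
So I would check that $\iota_i^* = s_{w_i}$ for the three vectors $w_1 = v = -6H+7L-3M$, $w_2 = v' = -6H-3L+7M$ and $w_3 = 4H-3L-3M$. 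Each has square $-20$: this is part of Lemma (\ref{lem31})(2) for $v$ and $v'$, and a one-line computation from the intersection matrix of $H, L, M$ for $w_3$. Hence all three are genuine $-20$-elements and the reflections make sense.

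For $i=1$ the verification is immediate from Proposition (\ref{lem33})(3). By Lemma (\ref{lem31})(2) one has $(f.v)_S = (e.v)_S = 0$ and $(v^2)_S = -20$, so $s_v$ fixes both $f$ and $e$ and sends $v$ to $v + \tfrac{-20}{10}v = -v$; this is exactly the diagonal matrix recorded for $\iota_1^*$. The case $i=2$ is handled identically after replacing $\langle f, e, v\rangle$ by $\langle f', e', v'\rangle$ and invoking the primed analog of Proposition (\ref{lem33})(3).

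The only case demanding computation is $i=3$, and this is where the main point lies. First I would rewrite $w_3$ in the basis $\langle f, e, v\rangle$: solving $f = H-L$, $e = H-L+M$, $v = -6H+7L-3M$ for $H, L, M$ and substituting gives
$$4H - 3L - 3M = 10f + v\,\, ,$$
whence $(w_3^2)_S = 100(f^2)_S + 20(f.v)_S + (v^2)_S = -20$. Feeding the intersection numbers $(f.w_3)_S = 0$, $(e.w_3)_S = 10$, $(v.w_3)_S = -20$ into the reflection formula yields $s_{w_3}(f) = f$, $s_{w_3}(e) = e + (10f+v)$ and $s_{w_3}(v) = v - 2(10f+v) = -20f - v$, whose matrix in the basis $\langle f, e, v\rangle$ is precisely the matrix of $\iota_3^*$ displayed in Corollary (\ref{lem35}). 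Thus $\iota_3^* = s_{w_3}$.

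The whole argument is routine once the change of basis $4H - 3L - 3M = 10f+v$ is in hand, which is the only step requiring care, since $\iota_3$ is the one involution whose matrix is not already in diagonal reflection form. As a conceptual cross-check, and to make transparent that the involution really is a reflection rather than some other order-two isometry, I would note that a direct eigenvector computation shows the $(-1)$-eigenspace of $\iota_3^*$ to be spanned by $10f+v = w_3$ while its fixed locus is the hyperplane $w_3^{\perp} = \langle f,\, 2e+v\rangle$; these two properties characterize $s_{w_3}$ among the involutions of ${\rm NS}\,(S)$, giving the claim without matching entries by hand.
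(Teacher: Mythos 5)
Your proposal is correct: the change of basis $4H-3L-3M = 10f+v$, the intersection numbers $(f.w_3)_S=0$, $(e.w_3)_S=10$, $(v.w_3)_S=-20$, and the resulting matrix for $s_{w_3}$ all check out and do match the matrix of $\iota_3^*$ in Corollary (\ref{lem35}). The route differs slightly from the paper's in how the case $i=3$ is closed. You verify $\iota_3^* = s_{w_3}$ by computing the full matrix of the reflection $s_{w_3}$ in the basis $\langle f,e,v\rangle$ and matching it entry by entry against Corollary (\ref{lem35}); this is elementary and self-contained, at the cost of carrying out the complete reflection computation. The paper instead argues abstractly: $\iota_3^*$ is an orthogonal involution with determinant $-1$ which is not $-id$, hence (its eigenvalues being $1,1,-1$) it \emph{must} be the reflection in the hyperplane orthogonal to its $(-1)$-eigenvector, unique up to sign; one then only has to exhibit $4H-3L-3M$ as such an eigenvector, with no need to compute the reflection's matrix at all. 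Your closing ``conceptual cross-check'' --- identifying the $(-1)$-eigenspace ${\mathbf R}(10f+v)$ and the fixed hyperplane $\langle f, 2e+v\rangle = w_3^{\perp}$ --- is in substance exactly the paper's argument, so your write-up in fact contains both proofs: the direct matrix verification as the main line, and the eigenvalue characterization as confirmation. Either suffices; the paper's is shorter, yours makes the reflection completely explicit.
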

\begin{proof}
The results are clear for $\iota_1$, $\iota_2$ by their explicit matrix forms. 
Observe that $\iota_3^*$ is an orthogonal involution with determinant $-1$, being not $-id$. Hence $\iota_3^*$ is the reflection with respect to the hyperplane orthogonal to a primitive eigenvector, which is unique up to $\pm$, with eigenvalue $-1$ of $\iota_3^*$. Indeed, $4H -3L -3M$ is such a vector. 
\end{proof} 

The goal of this section is to prove the following:

\begin{theorem}\label{main3}
Let $S \subset {\mathbf P}^3$ be as above. Then:
\begin{enumerate}
\item ${\rm Aut}\, (S) = \langle \iota_1, \iota_2, \iota_3 \rangle \simeq {\mathbf Z}_2 * {\mathbf Z}_2 * {\mathbf Z}_2$. 
\item No element other than $id_S$ is derived from 
${\rm Aut}\, ({\mathbf P}^3)$.
\item Every element of ${\rm Aut}\, (S)$ is derived from ${\rm Bir}\, ({\mathbf P}^3)$.
\item $S$ admits infinitely many different embeddings $S \to {\mathbf P}^3$, up to ${\rm Aut}\, ({\mathbf P}^3)$, and ${\rm Aut}\, (S)$ is derived from ${\rm Bir}\, ({\mathbf P}^3)$ in any embedding $S \to {\mathbf P}^3$. 
\end{enumerate} 
\end{theorem}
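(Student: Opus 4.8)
The plan is to move everything to the rank-three hyperbolic lattice ${\rm NS}(S)$ and study the induced action on the positive cone $P(S)$. By Lemma (\ref{lem32})(2) the representation $*\colon {\rm Aut}(S)\to {\rm O}({\rm NS}(S))$ is injective, so proving (1) amounts to identifying its image $A^{*}$. Combining the global Torelli theorem with Lemma (\ref{lem32})(1) — whose content is that ${\rm rank}\,T(S)=19$ is odd, so every Hodge isometry of $T(S)$ is $\pm{\rm id}$ — one sees that $A^{*}$ is precisely the group of isometries of ${\rm NS}(S)$ preserving $P(S)$ and the nef cone $\overline{\rm Amp}(S)$ and acting on ${\rm NS}(S)^{*}/{\rm NS}(S)\simeq{\mathbf Z}_{20}$ by $\pm{\rm id}$. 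Everything in (1) is then a question about this group and its fundamental domains in (the projectivisation of) $P(S)$.

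Second, I would nail down the internal structure of $G:=\langle \iota_1,\iota_2,\iota_3\rangle$. By Corollary (\ref{lem36}) the $\iota_i^{*}$ are the reflections in the three $(-20)$-vectors $v_1=v$, $v_2=v'$, $v_3=4H-3L-3M$, and a direct computation from the intersection matrix gives $(v_1.v_2)_S=180$, $(v_1.v_3)_S=(v_2.v_3)_S=-20$, hence $(v_i.v_j)_S^{2}\ge (v_i^{2})_S(v_j^{2})_S=400$ for each pair. Thus the walls $v_i^{\perp}$ pairwise do not cross inside $P(S)$: $v_1^{\perp}$ and $v_3^{\perp}$ meet only at the ray of $f=H-L$, $v_2^{\perp}$ and $v_3^{\perp}$ only at the ray of $f'=H-M$, while $v_1^{\perp},v_2^{\perp}$ are ultraparallel. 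Each product $\iota_i^{*}\iota_j^{*}$ therefore has infinite order, so by the theory of hyperbolic reflection groups $G\simeq{\mathbf Z}_2*{\mathbf Z}_2*{\mathbf Z}_2$ with fundamental domain the triangle $\Delta$ cut out by the three walls. Moreover $\iota_1,\iota_3$ fix $f$ and $\iota_2,\iota_3$ fix $f'$ (as $(f.v_3)_S=(f'.v_3)_S=0$); using $\varphi_1=\iota_1\iota_3$ and the direct matrix identity $\varphi_2=\iota_2\iota_3$ together with Proposition (\ref{lem34}), this gives ${\rm Aut}(\Phi_1)=\langle\iota_1,\iota_3\rangle$ and ${\rm Aut}(\Phi_2)=\langle\iota_2,\iota_3\rangle$, so that $G$ is the amalgam ${\rm Aut}(\Phi_1)*_{\langle\iota_3\rangle}{\rm Aut}(\Phi_2)$.

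Third comes the hard inclusion $A^{*}\subseteq G$. Here I would argue by the cusps of $\overline{\rm Amp}(S)$, i.e. the primitive isotropic nef classes, which are exactly the fibre classes of the elliptic pencils on $S$. Writing such a class in the basis $\langle f,e,v\rangle$ and imposing $x^{2}=0$ with $(v^{2})_S=-20$ turns into a Pell-type equation; the claim to establish is that its primitive nef solutions are exactly the two $G$-orbits of $f$ and $f'$, and that these are \emph{not} interchanged by $A^{*}$ (consistent with the fact that the lattice involution $L\leftrightarrow M$ acts on ${\rm NS}(S)^{*}/{\rm NS}(S)$ by multiplication by $11\not\equiv\pm1\ ({\rm mod}\ 20)$, hence is not induced by an automorphism). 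Granting this, for any $g\in{\rm Aut}(S)$ the class $g^{*}f$ lies in the $G$-orbit of $f$, so after composing with an element of $G$ we may assume $g^{*}f=f$; then $g\in{\rm Aut}(\Phi_1)=\langle\iota_1,\iota_3\rangle\subseteq G$ and $A^{*}=G$ follows. \textbf{The main obstacle is exactly this complete description of $\overline{\rm Amp}(S)$} — that its $(-2)$-walls come only from the $G$-orbit of $L,M$, that $S$ carries no elliptic pencil outside the $G$-orbits of $\Phi_1,\Phi_2$, and that $\Delta\cap\overline{\rm Amp}(S)$ is a strict fundamental domain for $A^{*}$; establishing this rules out all unexpected automorphisms.

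Finally, parts (2)–(4). For (2), Proposition (\ref{k3})(2) identifies the automorphisms coming from ${\rm Aut}({\mathbf P}^3)={\rm PGL}(4,{\mathbf C})$ with the finite group $\{g:g^{*}H=H\}$; since $(H.v_1)_S=(H.v_2)_S=-20$ and $(H.v_3)_S=10$ are all nonzero, the ample class $H$ lies in the interior of the fundamental domain $\Delta$ and so is fixed by no nontrivial element of $G={\rm Aut}(S)$, proving (2). For (3), I would realise the generators as Cremona transformations exactly as in Theorem (\ref{elliptic}): the pencil of planes through $L$ presents ${\mathbf P}^3$ as birational to a ${\mathbf P}^2$-bundle over ${\mathbf P}^1$ whose fibres contain the plane cubics $E_t$ with origin $M\cap P_t$, and the fibrewise Weierstrass addition formula — valid over the function field ${\mathbf C}({\mathbf P}^1)$ by the remark after Theorem (\ref{elliptic}) — yields elements of ${\rm Bir}({\mathbf P}^3)$ restricting to $\iota_1$ and $\varphi_1$, and symmetrically to $\iota_2,\varphi_2$ via $M$; since $\iota_3=\iota_1\varphi_1$ and ${\rm Aut}(S)=\langle\iota_1,\iota_2,\iota_3\rangle$, every automorphism is a composite of such maps and hence derived from ${\rm Bir}({\mathbf P}^3)$. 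For (4), triviality of the stabiliser of $H$ (from (2)) together with $|{\rm Aut}(S)|=\infty$ makes the orbit $\{g^{*}H\}$ an infinite set of very ample classes of square $4$, i.e. infinitely many embeddings up to ${\rm Aut}({\mathbf P}^3)$ by Proposition (\ref{k3}); and because the construction in (3) is intrinsic to the two elliptic pencils, the conjugation principle of Proposition (\ref{k3})(3) propagates "derived from ${\rm Bir}({\mathbf P}^3)$" to every such embedding, the uniformity across all embeddings being the delicate point.
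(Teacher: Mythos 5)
The heart of the theorem --- the inclusion ${\rm Aut}\,(S)\subseteq\langle\iota_1,\iota_2,\iota_3\rangle$ in part (1) --- is precisely the step you flag as ``the main obstacle'' and do not prove, so the proposal has a genuine gap at its core. Your plan needs three unestablished ingredients: (i) that the primitive isotropic nef classes fall into exactly the two $G$-orbits of $f$ and $f'$ (the Pell-type analysis is never carried out); (ii) that $\Delta\cap\overline{{\rm Amp}}^{e}\,(S)$ is a strict fundamental domain for ${\rm Aut}\,(S)$, which is essentially the statement being proved and so cannot be assumed; and (iii) that no automorphism interchanges the two orbits --- your evidence, that the swap $L\leftrightarrow M$ acts by $11\not\equiv\pm1\ ({\rm mod}\ 20)$ on ${\rm NS}\,(S)^*/{\rm NS}\,(S)$, is correct and rules out that one isometry, but to rule out \emph{every} isometry carrying the orbit of $f$ to that of $f'$ you would need to know that all such isometries lie in $G\cdot({\rm swap})\cdot G$, which is again the fundamental-domain problem. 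The paper resolves all of this by a different, entirely explicit route: Lemma (\ref{lem38}) exhibits a pentagonal cone $D$, bounded by the three mirrors together with the two contraction faces, as a fundamental domain for $G$ on $\overline{{\rm Amp}}^{e}\,(S)$; the Diophantine computation of Lemma (\ref{lem39}) shows the only classes of square $4$ in $D$ are $H$, $2H-L-M$, $3H-3L+M$, $3H-3M+L$, of which only $H$ is very ample; then for $g\in{\rm Aut}\,(S)$ one moves $g^*H$ into $D$ by an element of $G$, concludes it equals $H$, and Lemma (\ref{lem37}) ($g^*H=H\Rightarrow g={\rm id}_S$) finishes. Your proposal contains no substitute for the computation of Lemma (\ref{lem39}).

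Two further points. First, part (4) remains incomplete even granting your part (1): to show ${\rm Aut}\,(S)$ is derived from ${\rm Bir}\,({\mathbf P}^3)$ in an \emph{arbitrary} embedding, one needs that every very ample class $A$ with $(A^2)_S=4$ lies in the ${\rm Aut}\,(S)$-orbit of $H$ --- exactly Lemmas (\ref{lem38}), (\ref{lem39}) again --- after which Proposition (\ref{k3})(3) transfers the property; your remark that the construction is ``intrinsic to the two elliptic pencils'' is not an argument for this orbit statement. Second, a smaller point of rigor: ``the mirrors pairwise do not cross inside $P(S)$, hence $G\simeq{\mathbf Z}_2*{\mathbf Z}_2*{\mathbf Z}_2$'' is false as a general principle. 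For three disjoint geodesics in the hyperbolic plane with the middle one separating the other two (mirrors $x=-1,0,1$ in the upper half-plane, say), one has $r_{-1}=r_0r_1r_0$, and the group is infinite dihedral, not a free product of three ${\mathbf Z}_2$'s. What saves you is the incidence data you computed --- wall $3$ touches walls $1$ and $2$ at the cusps $f$, $f'$, while walls $1$ and $2$ are ultraparallel --- which forces the three walls to bound a common chamber (containing $H$); this, or the explicit ping-pong with the four domains $S_1$, $S_2$, $S_3$ as in the paper's Lemma (\ref{lem311}), must be said explicitly. By contrast, your part (3) is sound and is the paper's own argument (Lemma (\ref{lem312})), and your part (2) is sound modulo (1), though the paper proves (2) independently of (1) by the direct Diophantine argument of Lemma (\ref{lem37}).
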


We prove Theorem (\ref{main3}) by deviding into several lemmas. 

\begin{lemma}\label{lem37}
Let $g \in {\rm Aut}\, (S)$. If $g^*H = H$, then $g = id_S$. In particular, 
no element of ${\rm Aut}\, (S)$ other than $id_S$ is derived from ${\rm Aut}\, ({\mathbf P}^3)$.
\end{lemma}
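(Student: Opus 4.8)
The plan is to prove $g^{*} = {\rm id}$ on ${\rm NS}\,(S)$; the conclusion $g = {\rm id}_S$ then follows from the injectivity of $*$ (Lemma (\ref{lem32})(2)), and the final assertion about ${\rm Aut}\,({\mathbf P}^3)$ is immediate from Proposition (\ref{k3})(2). Since $\{H, L, M\}$ is a ${\mathbf Z}$-basis of ${\rm NS}\,(S)$ and $g^{*}H = H$, it suffices to pin down $g^{*}L$ and $g^{*}M$.

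First I would observe that $g^{*}$ permutes the classes of irreducible $(-2)$-curves $C$ with $(C.H)_S = 1$, i.e. the lines on the quartic $S$, because $g$ is an automorphism and $g^{*}H = H$. A short computation classifies these: writing $C = aH + bL + cM$ and imposing $(C.H)_S = 4a + b + c = 1$ together with $(C^{2})_S = -2$ yields, after eliminating $c$, the equation $18a^{2} + 8ab + 2b^{2} - 9a - 2b = 0$, whose discriminant as a quadratic in $b$ is nonnegative only for $a = 0$, where $b \in \{0, 1\}$. Hence $L$ and $M$ are the only such classes (this step is purely lattice-theoretic and needs no genericity). Consequently $g^{*}$ either fixes both $L$ and $M$, in which case $g^{*} = {\rm id}$ and we are done, or it interchanges them.

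It remains to exclude the swap $g^{*}H = H$, $g^{*}L = M$, $g^{*}M = L$. Here I would pass to the discriminant group ${\rm NS}\,(S)^{*}/{\rm NS}\,(S) = \langle \overline{v/20}\rangle \simeq {\mathbf Z}_{20}$ of Lemma (\ref{lem31})(3). The swap sends $v = -6H + 7L - 3M$ to $v' = -6H - 3L + 7M = v - 10(L - M)$, and since $(L - M - v)/2 = 3H - 3L + M \in {\rm NS}\,(S)$ one has $\overline{(L-M)/2} = \overline{v/2} = 10\,\overline{v/20}$; therefore the swap acts on ${\mathbf Z}_{20}$ by $\overline{v/20} \mapsto \overline{v'/20} = -9\,\overline{v/20} \equiv 11\,\overline{v/20} \pmod{20}$, that is, as multiplication by $11$. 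But by Lemma (\ref{lem32})(1) every $g^{*}$ acts on ${\rm NS}\,(S)^{*}/{\rm NS}\,(S)$ as $\pm{\rm id}$, and $11 \not\equiv \pm 1 \pmod{20}$. This contradiction rules out the swap, leaving only $g^{*} = {\rm id}$.

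The main obstacle is this last step: the swap is an honest isometry of ${\rm NS}\,(S)$ fixing $H$ (indeed on the Fermat member $S_0$ it is even induced by the linear map $x_1 \mapsto -x_1$, $x_3 \mapsto -x_3$), so it cannot be excluded by lattice symmetry alone. What kills it is Lemma (\ref{lem32})(1), whose force comes from the genericity of $S$: there ${\rm rank}\,T(S) = 19$ is odd, forcing $g^{*}\vert T(S) = \pm{\rm id}$ and hence the $\pm{\rm id}$ constraint on the discriminant group that the value $11$ violates. Thus the crux is exactly the computation of the swap's action on ${\mathbf Z}_{20}$ and its comparison with this constraint; the enumeration of lines and the reduction via injectivity are routine.
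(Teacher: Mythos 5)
Your proof is correct and takes essentially the same route as the paper: classify the $(-2)$-classes of degree $1$ with respect to $H$ (only $L$ and $M$), deduce that $g^*$ either fixes or swaps them, dispose of the fixed case by the injectivity of $*$ (Lemma (\ref{lem32})(2)), and rule out the swap because its action on ${\rm NS}\,(S)^*/{\rm NS}\,(S) \simeq {\mathbf Z}_{20}$ violates the $\pm {\rm id}$ constraint of Lemma (\ref{lem32})(1). Your explicit computation that the swap acts as multiplication by $11$ is just a sharper form of the paper's check that the swap sends $\overline{v/20}$ to $\overline{v'/20} \neq \pm\overline{v/20}$.
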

\begin{proof}
Consider the following system of equations of $x, y, z \in {\mathbf Z}$:
$$((xH + yL +zM)^2)_S = -2\,\, ,\,\, ((xH + yL +zM).H)_S = 1\,\, .$$
The solutions are $(x, y, z) = (0,0, 1)$ and $(0,1,0)$. Thus if $g^*H = H$, 
then either $g^*L = L$ and $g^*M = M$ or $g^*L = M$ and $g^*L = M$. 
In the first case, $g = id_S$ by Lemma (\ref{lem32})(2). In the second case, 
$g^*(-6H +7L -3M) = -6H + 7L -3M$, where $v = -6H + 7L -3M$. Thus 
$g^*(v/20) \not= \pm v/20$ in ${\rm NS}\, (S)$, a contradiction to 
Lemma (\ref{lem32})(1). This proves the result.  
\end{proof}

\begin{lemma}\label{lem38}
The following cone $D$
$$D := {\mathbf R}_{\ge 0}(H + \frac{L}{2} + \frac{M}{2}) + {\mathbf R}_{\ge 0}(H - L + \frac{M}{2}) + {\mathbf R}_{\ge 0}(H + \frac{L}{2} - M) + {\mathbf R}_{\ge 0}(H - L) + {\mathbf R}_{\ge 0}(H - M)\,\, ,$$
that is, the cone over the pentagon of the five vertices as indicated, is a fundamenal domain for the action of $\langle \iota_1, \iota_2, \iota_3 \rangle$ on the effective nef cone $\overline{{\rm Amp}}^{e}\,(S)$ of $S$.  
\end{lemma}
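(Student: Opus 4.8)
The plan is to realize $D$ as the chamber of a hyperbolic reflection group acting on the positive cone $P(S)$, and to separate its walls into ``mirror walls'' crossed by the $\iota_i$ and genuine boundary walls of the nef cone. Write $v = -6H+7L-3M$, $v' = -6H-3L+7M$, $w = 4H-3L-3M$ (the three $(-20)$-vectors of Corollary (\ref{lem36})). First I would pin down the five walls of $D$ by intersection-number computations: the five edges of the pentagon lie respectively on $L^{\perp}$, $M^{\perp}$, $v^{\perp}$, $v'^{\perp}$, $w^{\perp}$. Concretely the two ideal vertices are the isotropic fibre classes $f = H-L \in v^{\perp}\cap w^{\perp}$ and $f' = H-M \in v'^{\perp}\cap w^{\perp}$, while the three finite vertices are $H+\frac{L}{2}+\frac{M}{2} = L^{\perp}\cap M^{\perp}$, $H-L+\frac{M}{2} = M^{\perp}\cap v^{\perp}$ and $H+\frac{L}{2}-M = L^{\perp}\cap v'^{\perp}$. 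Thus two of the walls are orthogonal to the $(-2)$-curves $L,M$, and the other three are the mirrors of $\iota_1,\iota_2,\iota_3$.

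Next I would read off the dihedral data from the Gram matrix. Using $(L,M)_S = 0$, $(M,v)_S = 0$ and $(L,v')_S = 0$ the three finite vertices are right angles; using $(v,w)_S = (v',w)_S = -20 = -\sqrt{(v^2)_S (w^2)_S}$ the vertices $f,f'$ are cusps (the walls meet on $\partial P(S)$, so the angle is $0$); and $(v,v')_S = 180$, which exceeds $\sqrt{(v^2)_S(v'^2)_S}=20$, shows $v^{\perp}$ and $v'^{\perp}$ are ultraparallel. Hence on the hyperbolic plane $P(S)/{\mathbf R}_{>0}$, $D$ is a Coxeter polygon with angles $(\frac{\pi}{2},\frac{\pi}{2},\frac{\pi}{2},0,0)$, and Poincar\'e's polygon theorem shows that the group $\Gamma$ generated by the reflections $s_L,s_M,\iota_1^{*},\iota_2^{*},\iota_3^{*}$ in these five walls has $D$ as a strict fundamental domain on $P(S)$; in particular $\langle\iota_1^{*},\iota_2^{*},\iota_3^{*}\rangle \simeq {\mathbf Z}_2 * {\mathbf Z}_2 * {\mathbf Z}_2$.

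It then remains to single out, among the $\Gamma$-translates tiling $P(S)$, those lying in $\overline{{\rm Amp}}^e(S)$. Since $L,M$ are effective $(-2)$-curves, the walls $L^{\perp},M^{\perp}$ are genuine boundary walls of the nef cone and crossing them (applying $s_L,s_M$) leaves $\overline{{\rm Amp}}^e(S)$, whereas $\iota_1,\iota_2,\iota_3 \in {\rm Aut}\,(S)$ preserve $\overline{{\rm Amp}}^e(S)$, so crossing $v^{\perp},v'^{\perp},w^{\perp}$ stays inside it. Consequently the $\Gamma$-chambers contained in the nef cone are exactly the $G$-translates $gD$, $g \in G := \langle\iota_1,\iota_2,\iota_3\rangle$, and they tile it. The covering can be made effective by the monovariant $x \mapsto (x,H)_S$: the inequalities $(x,L)_S \ge 0$, $(x,M)_S \ge 0$ hold automatically for nef $x$, while if $x$ violates one of $(x,v)_S \le 0$, $(x,v')_S \le 0$, $(x,w)_S \ge 0$ the corresponding reflection strictly decreases $(x,H)_S$ (for instance $\iota_1^{*}x = x + \frac{(x,v)_S}{10}v$ gives $(\iota_1^{*}x,H)_S = (x,H)_S - 2(x,v)_S$). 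Since $(\cdot,H)_S \ge 0$ is proper on the $G$-orbit (the action on $P(S)$ being properly discontinuous), a minimizer exists and lies in $D$; disjointness of the interiors $g\,{\rm int}(D)$ then follows from the Coxeter structure of $\Gamma$.

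The main obstacle is the step tying the abstract reflection group $\Gamma$ to the actual nef cone, i.e.\ verifying that $L^{\perp},M^{\perp}$ and their $G$-translates are the \emph{only} boundary walls $D$ meets, equivalently that no $(-2)$-curve other than $L,M$ has its orthogonal hyperplane cutting ${\rm int}(D)$, so that $D \subseteq \overline{{\rm Amp}}^e(S)$ in the first place. Here one must control all $(-2)$-curves: using that $\Phi_1,\Phi_2$ have no reducible fibres (Proposition (\ref{lem33})(1)), every irreducible $(-2)$-curve meeting a fibre once is a section, classified in Proposition (\ref{lem33})(2), and a direct pairing (for example $2(H+\frac{L}{2}+\frac{M}{2}) = 10f + 10e + 3v$ pairs with the section $C_n = 10n^2 f + e + nv$ to give $10(10n^2 - 6n + 1) > 0$) shows the three finite vertices are nef. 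Ruling out possible multisection walls through ${\rm int}(D)$ is the delicate point, and is where Lemma (\ref{lem32}) together with the determinant computations of Lemma (\ref{lem31}) must do the work.
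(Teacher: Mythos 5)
Your reflection-group scaffolding is sound and your lattice computations check out: the wall identifications, the Gram data $(L.M)_S=(M.v)_S=(L.v')_S=0$, $(v.w)_S=(v'.w)_S=-20$, $(v.v')_S=180$, the reflection formula $\iota_1^*x = x + \frac{(x.v)_S}{10}v$, and the resulting Coxeter pentagon with angles $(\frac{\pi}{2},\frac{\pi}{2},\frac{\pi}{2},0,0)$ are all correct. The genuine gap is exactly the step you flag as ``the main obstacle'' and then leave unresolved: nothing in the proposal establishes $D \subseteq \overline{{\rm Amp}}^{e}\,(S)$. This is not a technicality that the rest of the argument can absorb. Without it, the translates $gD$, $g \in \langle \iota_1,\iota_2,\iota_3\rangle$, need not lie in the nef cone; the assertion that ``the $\Gamma$-chambers contained in the nef cone are exactly the $G$-translates of $D$'' has no content; and the terminal point of your monovariant process is only known to lie in the cone cut out by the five inequalities, which you cannot yet identify with a subcone of $\overline{{\rm Amp}}^{e}\,(S)$. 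Deferring the verification to Lemma (\ref{lem32}) and the determinant computations of Lemma (\ref{lem31}) cannot succeed: those lemmas concern the discriminant group ${\rm NS}\,(S)^*/{\rm NS}\,(S)$ and the faithfulness of ${\rm Aut}\,(S) \to {\rm O}\,({\rm NS}\,(S))$, and say nothing about which $(-2)$-classes are effective or where their walls sit. Moreover, your one pairing computation only shows that sections of $\Phi_1$ meet $H+\frac{L}{2}+\frac{M}{2}$ positively; sections of $\Phi_2$, the vertices $H-L+\frac{M}{2}$ and $H+\frac{L}{2}-M$, and all multisections remain untreated, so even the partial claim ``the three finite vertices are nef'' is not proved.

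The good news is that the missing step is elementary, and the multisection worry is a red herring: no classification of $(-2)$-curves is needed at all. Write the doubled generators as $2P_1 = 2H+L+M$, $2P_2 = 2(H-L)+M = 2f+M$, $2P_3 = 2(H-M)+L = 2f'+L$, together with $P_4 = f$, $P_5 = f'$. The classes $f, f'$ are nef, being the fiber classes of the elliptic fibrations $\Phi_1, \Phi_2$. For $2P_2$: one has $(M.(2f+M))_S = 2-2 = 0$, while any irreducible curve $C \neq M$ satisfies $(C.f)_S \ge 0$ and $(C.M)_S \ge 0$, hence $(C.(2f+M))_S \ge 0$; so $2P_2$ is nef, and symmetrically $2P_3$. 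For $2P_1$: $(L.(2H+L+M))_S = (M.(2H+L+M))_S = 0$, and any irreducible $C \neq L, M$ has $(C.H)_S > 0$, $(C.L)_S \ge 0$, $(C.M)_S \ge 0$, so $(C.(2H+L+M))_S > 0$. Thus all five generators are nef and effective, and the convex cone $D$ they span lies in $\overline{{\rm Amp}}^{e}\,(S)$; with this in place your Poincar\'e-plus-monovariant argument closes. This containment is precisely what the paper's own (very compressed) proof encodes in the assertions that the ray through $H+\frac{L}{2}+\frac{M}{2}$ ``gives the contraction of both $L$ and $M$'' and that the two adjacent faces ``correspond to the contraction of'' $M$ and $L$: the paper identifies two faces of $D$ as boundary faces of the nef cone and the remaining three as mirrors of the reflections $\iota_1^*, \iota_2^*, \iota_3^*$ (Corollary (\ref{lem36})), and then invokes the standard chamber principle for which your more detailed tiling argument is a legitimate substitute --- but only after $D \subseteq \overline{{\rm Amp}}^{e}\,(S)$ is proved.
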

{\it Readers are strongly encouraged to draw a picture, cut out by 
$(H.*)_S = 1$, by oneselves.} 
\begin{proof} The first three vertices are in the 
interior of the positive cone. The last two vertices are on the boundary of the positive cone, corresponding to $\Phi_1$ and $\Phi_2$. The first ray ${\mathbf R}_{\ge 0}(H + \frac{L}{2} + \frac{M}{2})$ gives the contraction 
of both $L$ and $M$. The face spanned by $H + \frac{L}{2} + \frac{M}{2}$ and $H - L + \frac{M}{2}$ corresponds to the contraction of $M$, and the face spanned by $H + \frac{L}{2} + \frac{M}{2}$ and $H + \frac{L}{2} -M$ corresponds to the contraction of $L$. Thus they are (part of the) faces of $\overline{{\rm Amp}}^{e}\,(S)$ for which $H$ is on the below (resp. left) side. Consider the remaining three faces. They are in the same side as $H$ with respect to the first two faces. The face spanned by $H - L + \frac{M}{2}$ and $H - L$ is orthogonal to $-6H + 7L -3M$, the face spanned by $H + \frac{L}{2} - M$ and $H - M$ is orthogonal to $-6H -3L +7M$ and the last face spanned by $H-L$ and $H-M$ is orthogonal to $4H -3L -3M$. Thus these three faces are (part of) the invariant hyperplanes of 
the orthogonal {\it reflections} $\iota_1$, $\iota_2$ and $\iota_3$ respectively. 
This implies the result. 
\end{proof}
\begin{lemma}\label{lem39}
The set $Q := \{x \in {\rm NS}\, (S)\, \vert\, 
x \in D\,\, , \,\, (x^2)_S = 4\}$,  
where $D$ is the cone in Lemma (\ref{lem38}), is
$$\{ H\,\, ,\,\, 2H-L-M\,\, ,\,\, 3H -3L +M\,\, ,\,\, 3H -3M +L\,\, .\}\,\, .$$
Moreover, among these four elements, only $H$ is very ample.
\end{lemma}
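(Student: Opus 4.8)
The plan is to convert the geometric description of $D$ in Lemma~(\ref{lem38}) into explicit linear inequalities, trivialise the equation $(x^2)_S=4$ by passing to the basis $\langle f,e,v\rangle$ of Lemma~(\ref{lem31}), and then bound the resulting Diophantine problem. From the proof of Lemma~(\ref{lem38}) the five facets of $D$ lie on $M^{\perp}$, $L^{\perp}$, $v^{\perp}$, $(v')^{\perp}$ and $w^{\perp}$, where $w=4H-3L-3M$ (Corollary~(\ref{lem36})); evaluating at the interior class $H$ fixes the signs, so
\[
D=\{x\,\vert\,(x.M)_S\ge 0,\ (x.L)_S\ge 0,\ (x.v)_S\le 0,\ (x.v')_S\le 0,\ (x.w)_S\ge 0\}.
\]
Writing $x=pf+qe+rv$ and using $(f^2)_S=(e^2)_S=0$, $(e.f)_S=1$, $(v.f)_S=(v.e)_S=0$, $(v^2)_S=-20$, one gets $(x^2)_S=2pq-20r^2$, so $(x^2)_S=4$ becomes the single relation $pq=10r^2+2$, while the five facet conditions become $p\ge q$, $3p+3q\ge 20r$, $r\ge 0$, $p+2q\ge 9r$ and $q\ge 2r$ respectively. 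In particular $p\ge q\ge 2r\ge 0$ and $pq=10r^2+2>0$, so $p,q\ge 1$.

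The main obstacle is that $D\cap\{(x^2)_S=4\}$ is \emph{not} compact: it escapes to infinity along the two isotropic rays $\mathbf{R}_{\ge 0}f$ and $\mathbf{R}_{\ge 0}f'$ (the pentagon vertices $H-L$ and $H-M$), so finiteness of the lattice points cannot be read off from a bound on $(x.H)_S$ and must instead be forced by integrality together with the reflection walls. Concretely, I would substitute $p=(10r^2+2)/q$ into the two remaining inequalities $3p+3q\ge 20r$ and $p+2q\ge 9r$, obtaining $3q^2-20rq+30r^2+6\ge 0$ and $2q^2-9rq+10r^2+2\ge 0$. Since $p\ge q$ forces $q\le\sqrt{10r^2+2}$, the upper root branch of the first is excluded, leaving $q\le(20r-\sqrt{40r^2-72})/6$; comparing with the second then excludes its upper branch $q\ge(9r+\sqrt{r^2-16})/4$ as well, so that $q\le(9r-\sqrt{r^2-16})/4$. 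The delicate point is the elementary inequality $(9r-\sqrt{r^2-16})/4<2r+1\iff r>4$, which together with $q\ge 2r$ forces $q=2r$ for every $r\ge 5$; but then $p=5r+1/r\notin\mathbf{Z}$, a contradiction. Thus $r\le 4$.

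It then remains to run the finite search over $r\in\{0,1,2,3,4\}$: for each such $r$ the divisor pairs of $10r^2+2$ with $2r\le q\le p$ are few, and testing the inequalities $3p+3q\ge 20r$ and $p+2q\ge 9r$ discards all but $(p,q,r)=(2,1,0)$, $(4,3,1)$, $(6,2,1)$, $(18,9,4)$, i.e. $3H-3L+M$, $H$, $2H-L-M$ and $3H+L-3M$, which is exactly $Q$.

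For the last assertion I would appeal to Saint-Donat's analysis of projective models (\cite{SD74}): the classes in $Q$ are nef as $D\subset\overline{{\rm Amp}}^{e}(S)$, and a nef class of square $4$ on a K3 surface fails to be very ample as soon as it meets some pencil $E$ with $(E^2)_S=0$ in degree one or two. With $f=H-L$ and $f'=H-M$ the fibre classes of $\Phi_1$ and $\Phi_2$, one computes $(f.(3H-3L+M))_S=1$, $(f.(2H-L-M))_S=2$ and $(f'.(3H+L-3M))_S=1$, so none of the three classes other than $H$ is very ample; and $H$ is very ample by construction of $S\subset\mathbf{P}^3$. This completes the plan.
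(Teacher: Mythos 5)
Your proposal is correct, and it reaches the enumeration by a genuinely different route than the paper. The paper keeps the coordinates $A = xH+yL+zM$, normalizes by $s=y/x$, $t=z/x$ (using the $s \leftrightarrow t$ symmetry), so that $D$ becomes a compact quadrangle and $(A^2)_S=4$ becomes the circle $(s-\frac{1}{2})^2+(t-\frac{1}{2})^2 = \frac{5}{2}-\frac{2}{x^2}$; the solutions for $x\le 3$ are then found by representing small integers as sums of two squares, and $x\ge 4$ is excluded by playing the circle--quadrangle geometry against the fact that the denominators of $s,t$ divide $x$. You instead diagonalize the problem in the hyperbolic basis $\langle f,e,v\rangle$, turning the quadric condition into the divisor equation $pq=10r^2+2$ and $D$ into five wall inequalities; the a priori unboundedness (your escape along the isotropic rays, the paper's unbounded $x$) is defeated by playing the walls $p+2q\ge 9r$ and $q\ge 2r$ against integrality to force $r\le 4$, after which everything is a mechanical divisor-pair search. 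I verified your wall inequalities ($p\ge q$, $3p+3q\ge 20r$, $r\ge 0$, $p+2q\ge 9r$, $q\ge 2r$), the branch exclusions for $r\ge 5$ (with the caveat, which your argument absorbs, that the second quadratic has real roots only for $r\ge 4$ and a double root at $r=4$, so the genuine exclusion only operates for $r\ge 5$), and the dictionary $(2,1,0)\mapsto 3H-3L+M$, $(4,3,1)\mapsto H$, $(6,2,1)\mapsto 2H-L-M$, $(18,9,4)\mapsto 3H+L-3M$: all check out. What each approach buys: the paper's normalization makes the picture two-dimensional and visual at the cost of a delicate rationality argument for $x\ge 4$, while your basis makes the lattice search routine and pinpoints exactly where infinitude could arise. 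For the final assertion the paper argues directly that $3H-3L+M=3f+M$ and $3H-3M+L=3f'+L$ have fixed components and that $2H-L-M=f+f'$ gives the double cover $\Phi_1\times\Phi_2$ of ${\mathbf P}^1\times{\mathbf P}^1$, whereas you invoke the necessary direction of Saint-Donat's criterion via the degrees $(f.(3H-3L+M))_S=1$, $(f.(2H-L-M))_S=2$, $(f'.(3H+L-3M))_S=1$ against the elliptic pencils $f$, $f'$; this is the same geometry packaged numerically, and both are valid.
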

\begin{proof} Note that $3H -3L +M = 3f + M$. Thus $\vert 3H -3L + M \vert = \vert 3f \vert + M$. This is because $\dim \vert 3H -3L +M \vert = \dim \vert 3f \vert = 3$ by the Reiman-Roch formula. Similarly $\vert 3H -3M +L \vert = \vert 3(H-M) \vert + L$. $2H-L-M = (H-L) + (H-M)$ gives a double cover of ${\mathbf P}^1 \times {\mathbf P}^1$ given by $\Phi_1 \times \Phi_2$. This proves the last statement. Let $A \in Q$. Since $A \in {\rm NS}\, (S)$, we can write $A = xH + yL + zM$ for some $x, y, z \in {\mathbf Z}$. Since $A \in D$, it follows that $x > 0$. Set 
$$s :=  \frac{y}{x}\,\, ,\,\, t :=  \frac{z}{x}\,\, .$$
By symmetry of numerical conditions, we may and will assume that $s \le t$. 
Since $A \in D$ with $s \le t$, it follows that 
$$-1 \le s \le  t \le \frac{1}{2}\,\, ,\,\, -1 \le s + t\,\, .$$
Such $(s, t)$ form the quadrangle with four vertices $(1/2, 1/2)$, $(-1, 1/2)$, $(-1, 0)$, $(-1/2, -1/2)$ in the $st$-plane. 

{\it Here redarers are strongly recommended to draw a picture by oneselves.} 

On the other hand, by $(A^2)_S = 4$, we have 
$$4x^2 - 2y^2 -2z^2 +2xy + 2xz = 4\,\, .$$ 
Substituting $y = sx$ and $z = tx$, dividing by $x^2 \not= 0$ and completing squares, we obtain
$$(s - \frac{1}{2})^2 +(t - \frac{1}{2})^2 = 
\frac{5}{2} - \frac{2}{x^2}\,\, .$$
This is the circle with center $(1/2, 1/2)$ in the $st$-plane. 

Assume that $x = 1$. Then $s=y$ and $t=z$ are integers such that $(y - \frac{1}{2})^2 +(y - \frac{1}{2})^2 = \frac{1}{2}$. Solutions are only $(y,z) = (0,0)$ 
and $(1,1)$. Assume that $x = 2$. Then $2s=y$ and $2t=z$ are integers 
and satisfy $(y-1)^2 +(z-1)^2 = 8$. Partition of $8$ 
into two squares is only 
$8 = 4+4$. Hence, the solutions with $s \le t$ are 
only $(y,z) = (3, 3)$, $(-1, 3)$  
and $(-1,-1)$. But the first two solutions do not satisfies $t = z/2 \le 1$. Hence $(y,z) = (-1, -1)$. Assume that $x = 3$. Then $3s=y$ and $3t=z$ are integers and satisfy $(2y-3)^2 +(2z-3)^2 = 82$. Here we note that $2y-3$ and $2z-3$ are odd integers. Partition of $82$ into odd squares is only 
$82 = 1^2 + 9^2$ up to the order. By taking into accout that $-1 \le s \le t \le 1/2$, i.e., 
$-3 \le y \le z \le 3/2$, one can see that the solutions are only $(y,z) = (-3, 1)$. These four solutions give the four elemets in the statement. 

Assume that $x \ge 4$. One can compute the intersection points of the cricle above with the boundary $s+t = -1$ of the quadrangle above. In fact, substituting $t = -1-s$ into the eqaution of 
the circle above and symplifying it, 
we obtain 
$$s^2 + s = -\frac{1}{x^2}\,\, .$$
Solving this on $s$ in the range $-1 \le s \le t \le 1/2$ by using the root formula of quadratic equation, we find that the value of the $s$-coordinate at the intersection point is:
$$-\frac{1}{2} - \sqrt{\frac{1}{4} - \frac{1}{x^2}}\,\, .$$
From this calculation with $-1 \le s$, we obtain 
$$-1 \le s \le -\frac{1}{2} - \sqrt{\frac{1}{4} - \frac{1}{x^2}}\,\, .$$
If $s = -1$, then by substituting this into the equation of the circle above and solving it on $t$ by using the root formula of quadratic equation, we obtain
$$t = \frac{1}{2} \pm \frac{\sqrt{x^2 - 8}}{2x}\,\, .$$
Since $t$ is a rational number and $x$ is an integer, it follows that $x^2 -8$ 
is a square of some integer, i.e., $x^2 -4 = a^2$ for some non-negative integer $a$. This equation is equivalent to $(x-a)(x+a) = 8$. Here $x+a \ge 4$. Hence 
$x+a =4$ or $8$ and therefore $x - a =2$ or $1$ respectively. The solutions are $(x, a) = (3, 1)$ and $(9/2, 1/2)$. Since $x \ge 4$, $x = 9/2$. But this is not an integer, a contradiction. Hence $s \not= -1$, i.e., $s > -1$. Since $s$ is a rational number whose demoninator divides $x$, it follows from the inequality above for $s$ that 
$$-\frac{1}{2} - \sqrt{\frac{1}{4} - \frac{1}{x^2}} - (-1) \ge \frac{1}{x}\,\, ,$$
that is,
$$\frac{1}{2} - \frac{1}{x} \ge \sqrt{\frac{1}{4} - \frac{1}{x^2}}\,\, .$$
In particular, the left hand side is also non-negative. Thus
$$(\frac{1}{2} - \frac{1}{x})^2 \ge \frac{1}{4} - \frac{1}{x^2}\,\, .$$
Expanding the right hand side and transform the right hand side to the left, 
we obtain:  
$$\frac{2}{x^2} - \frac{1}{x} \ge 0\,\, .$$
Multplying both sides by $x^2 >0$, we obtain $2 - x \ge 0$. Hence $x \le 2$. However, this contradicts $x \ge 4$. Hence $x \le 3$. This completes the proof.
\end{proof}

\begin{lemma}\label{lem310}
${\rm Aut}\, (S) = \langle \iota_1, \iota_2, \iota_3 \rangle$. 
\end{lemma}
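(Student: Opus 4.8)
The plan is to prove the two inclusions separately, the inclusion $\langle \iota_1, \iota_2, \iota_3\rangle \subseteq \mathrm{Aut}(S)$ being trivial, and to obtain the reverse inclusion by combining the three structural lemmas already established: the fundamental domain property (Lemma \ref{lem38}), the classification of degree-four classes in $D$ together with the fact that only $H$ among them is very ample (Lemma \ref{lem39}), and the rigidity statement that $g^*H = H$ forces $g = \mathrm{id}_S$ (Lemma \ref{lem37}). The idea is to show that every automorphism can be moved, by an element of $\Gamma := \langle \iota_1, \iota_2, \iota_3\rangle$, into a position where Lemma \ref{lem37} applies.

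First I would fix an arbitrary $g \in \mathrm{Aut}(S)$ and consider the class $g^*H$. Since $g$ is an automorphism and $H$ is very ample, $g^*H$ is again very ample; in particular $(g^*H)^2 = 4$, the class lies in $\mathrm{NS}(S)$, and it belongs to the effective nef cone $\overline{\mathrm{Amp}}^e(S)$. By Lemma \ref{lem38}, the cone $D$ is a fundamental domain for the action of $\Gamma$ on $\overline{\mathrm{Amp}}^e(S)$, so there is $h \in \Gamma$ with $h^*(g^*H) \in D$. Because the isometries $h^*$ induced by automorphisms preserve the integral lattice $\mathrm{NS}(S)$ as well as self-intersection numbers, the class $h^*(g^*H) = (g \circ h)^*H$ lies in $D \cap \mathrm{NS}(S)$ and has self-intersection $4$; moreover, being the pullback of the very ample $H$ by the automorphism $g \circ h$, it is itself very ample.

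Next I would invoke Lemma \ref{lem39}: the only element of $D \cap \mathrm{NS}(S)$ of self-intersection $4$ that is very ample is $H$ itself. Hence $(g \circ h)^*H = H$, and Lemma \ref{lem37} then gives $g \circ h = \mathrm{id}_S$, i.e. $g = h^{-1} \in \Gamma$. As $g$ was arbitrary, this establishes $\mathrm{Aut}(S) \subseteq \langle \iota_1, \iota_2, \iota_3\rangle$, and combined with the obvious reverse inclusion it yields $\mathrm{Aut}(S) = \langle \iota_1, \iota_2, \iota_3\rangle$.

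I do not expect any single step here to present a serious obstacle, since the heavy combinatorial and geometric work has already been absorbed into Lemmas \ref{lem38} and \ref{lem39}. The one point that requires genuine care is the bookkeeping around contravariance: I must ensure that translating a point of the orbit into $D$ by $h^* \in \Gamma^*$ corresponds, at the level of automorphisms, to replacing $g$ by $g \circ h$, so that the resulting identity is exactly of the shape $(g\circ h)^*H = H$ needed to feed into Lemma \ref{lem37}. It is also worth emphasizing that the argument relies crucially on automorphisms preserving very-ampleness: this is what forces the candidate class to be $H$ rather than one of the other three elements of $Q$, and it is precisely why Lemma \ref{lem39} records that those remaining three classes fail to be very ample.
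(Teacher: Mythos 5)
Your proposal is correct and follows essentially the same route as the paper's own proof: pull back $H$ by an arbitrary $g$, move the class into the fundamental domain $D$ via some $\varphi$ in $\langle \iota_1, \iota_2, \iota_3 \rangle$ (Lemma~\ref{lem38}), identify $(g\circ\varphi)^*H$ with $H$ using very-ampleness and Lemma~\ref{lem39}, and conclude $g\circ\varphi = id_S$ by Lemma~\ref{lem37}. Your attention to the contravariance bookkeeping, $\varphi^*(g^*H) = (g\circ\varphi)^*H$, and to the role of very-ampleness in excluding the other three classes of $Q$ matches the paper exactly.
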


\begin{proof} We have $\langle \iota_1, \iota_2, \iota_3 \rangle \subset {\rm Aut}\, (S)$. Let $g \in {\rm Aut}\, (S)$. Let $H' := g^*H$. Then $H' \in \overline{{\rm Amp}}^e\,(S)$. Hence there is $\varphi \in \langle \iota_1, \iota_2, \iota_3 \rangle$ such that $H'' := \varphi^{*}H' \in D$ by Lemma (\ref{lem38}). Since $H$ is in ${\rm NS}\, (S)$, very ample and 
$(H^2)_S = 4$, so is $H''$. Hence $H'' = H$ by Lemma (\ref{lem39}). That is,  
$(g \circ \varphi)^{*}H = H$. Thus $g \circ \varphi = id_S$ by Lemma (\ref{lem37}). Hence $g \in \langle \iota_1, \iota_2, \iota_3 \rangle$. 
\end{proof}

\begin{lemma}\label{lem311}
$\langle \iota_1, \iota_2, \iota_3 \rangle = \langle \iota_1 \rangle * \langle \iota_2 \rangle * \langle \iota_3 \rangle \simeq {\mathbf Z}_2 * 
{\mathbf Z}_2 * {\mathbf Z}_2$. 
\end{lemma}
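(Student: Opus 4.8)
The plan is to show that the only relations among $\iota_1,\iota_2,\iota_3$ are $\iota_i^2 = id_S$, by a ping-pong (table-tennis) argument in the hyperbolic plane underlying the positive cone. First I would reduce everything to the lattice: by Lemma (\ref{lem32})(2) the homomorphism $*$ is injective, so it suffices to prove $\langle \iota_1^*, \iota_2^*, \iota_3^* \rangle = \langle \iota_1^* \rangle * \langle \iota_2^* \rangle * \langle \iota_3^* \rangle$ inside ${\rm O}\, ({\rm NS}\, (S))$. Since ${\rm NS}\, (S)$ has signature $(1,2)$, the projectivization of the positive cone $P(S)$ is a model of the hyperbolic plane $\mathbf{H}^2$, and by Corollary (\ref{lem36}) each $\iota_i^*$ acts as the reflection in the geodesic $M_i := n_i^{\perp}$, where $n_1 = -6H+7L-3M$, $n_2 = -6H-3L+7M$, $n_3 = 4H-3L-3M$ are the three $(-20)$-vectors.

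The key step is one Gram-matrix computation. Working in the basis $\langle H, L, M \rangle$ with the intersection matrix above, I would compute
\[
(n_1, n_2)_S = 180\,,\quad (n_1, n_3)_S = (n_2, n_3)_S = -20\,,\quad (n_i^2)_S = -20\,.
\]
Hence for every pair the Gram determinant of $\langle n_i, n_j \rangle$ is $(n_i^2)_S (n_j^2)_S - (n_i, n_j)_S^2 \le 0$ (namely $-32000$ for $\{1,2\}$ and $0$ for $\{1,3\}, \{2,3\}$), so the plane $\langle n_i, n_j \rangle$ is never negative definite. Consequently no two of the mirrors $M_1, M_2, M_3$ meet inside $\mathbf{H}^2$: $M_1, M_2$ are ultraparallel, while $M_1, M_3$ and $M_2, M_3$ are tangent at the ideal points given by the isotropic classes $f = H-L$ and $f' = H-M$. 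Equivalently, each $\iota_i^* \iota_j^*$ has infinite order. For the last two pairs one sees this more directly: $\iota_1 \circ \iota_3 = \varphi_1$ (since $\iota_3 = \iota_1 \circ \varphi_1$), and by the $L \leftrightarrow M$ symmetry of the intersection matrix the analogous identity holds for $\iota_2 \circ \iota_3$, so Proposition (\ref{lem33}) already gives infinite order for these via ${\rm MW}\, (\Phi_i) \simeq \mathbf{Z}$.

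With the mirrors understood, I would run the ping-pong lemma for free products of involutions. Using that $D$ lies on a fixed side of each $M_i$ (Lemma (\ref{lem38})), let $X_i \subset \mathbf{H}^2$ be the open half-plane bounded by $M_i$ and not containing $D$. Pairwise disjointness of $X_1, X_2, X_3$ is precisely the non-intersection of the mirrors established above (the two tangent pairs share only an ideal boundary point, which the open half-planes avoid). Since $X_j$ then lies on the $D$-side of $M_i$ for $j \ne i$, the reflection $\iota_i$ carries $X_j$ into $X_i$. The table-tennis lemma then yields $\langle \iota_1, \iota_2, \iota_3 \rangle \simeq \mathbf{Z}_2 * \mathbf{Z}_2 * \mathbf{Z}_2$.

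The main obstacle is the geometric bookkeeping in this last step: one must fix consistent orientations for the normals $n_i$ so that ``the side of $M_i$ not containing $D$'' is unambiguous, and then verify cleanly that the three outer half-planes are genuinely disjoint. Once the sign conventions are pinned down the inclusions $\iota_i(X_j) \subseteq X_i$ are immediate from the definition of a reflection, and the three inner-product computations are routine. An alternative to ping-pong is to invoke Vinberg's theory of discrete reflection groups directly: $D$ is a fundamental polyhedron whose mirror walls pairwise subtend dihedral angle $0$, so the associated Coxeter matrix has all off-diagonal entries $\infty$, giving the same conclusion.
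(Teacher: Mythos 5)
Your proof is correct and is essentially the paper's own argument: the paper also runs a ping-pong, taking $S_i$ to be the open part of the positive cone on the side of the $i$-th mirror opposite to $H$ (the cone over your $X_i$), observing that the $S_i$ are pairwise disjoint with $\iota_i^*(S_j) \subset S_i$ for $j \neq i$, and then tracking the basepoint $H$ through a reduced word. The only difference is bookkeeping: your Gram-matrix computation $\bigl((n_1.n_2)_S = 180$, $(n_1.n_3)_S = (n_2.n_3)_S = -20\bigr)$ verifies explicitly that the mirrors are pairwise ultraparallel or tangent, which the paper instead reads off implicitly from the fundamental-domain picture of Lemma (\ref{lem38}), and you use the basepoint-free three-factor ping-pong lemma where the paper uses the orbit of $H$.
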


\begin{proof} The positive cone $P(S)$ is divided into the four open domains by the three invariant hyperplanes $L_1$, $L_2$, $L_3$ corresponding the reflections $\iota_1$, $\iota_2$, $\iota_3$ respectively as in the proof of 
Lemma (\ref{lem38}). 

{\it Here readers are again strongly recommended to draw a picture 
by oneselves.}
 
Let $S_1$ be the one of the two domains of $P(S)$ divided by $L_1$, being in the {\it opposite} side of $H$. Similarly we define $S_2$ and $S_3$. Then the 
intersection of any two of $S_1$, $S_2$. $S_3$ is empty. Moreover $\iota_i^*(S_j) \subset S_i$ whenever $j \not= i$ and $\iota_i^*(H) \in S_i$. Hence, the result follows. Indeed,  
$$\iota_{k_1}^*\iota_{k_2}^* \cdots \iota_{k_m}^* \not= id$$ 
for any $k_1 \not= k_2 \not= \cdots \not= k_m$ such that $k_i \in \{1,2,3\}$. 
This is because the image of $H$ by the left hand side is in $S_{k_1}$ but $id(H) = H$ by the right hand side is not. This proves the result.  
\end{proof}

\begin{lemma}\label{lem312}
Every element of ${\rm Aut}\, (S)$ is derived from 
${\rm Bir}\, ({\mathbf P}^3)$. 
\end{lemma}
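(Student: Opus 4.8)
The plan is to exploit the two elliptic fibrations $\Phi_1$, $\Phi_2$ together with the field-theoretic strengthening of Theorem (\ref{elliptic}) noted just after its proof. First I would record that the elements of ${\rm Aut}\,(S)$ derived from ${\rm Bir}\,({\mathbf P}^3)$ form a \emph{subgroup}: if $\tilde g, \tilde h \in {\rm Bir}\,({\mathbf P}^3)$ satisfy $\tilde g_*S = \tilde h_*S = S$ with $\tilde g|S = g$ and $\tilde h|S = h$, then $\tilde g \circ \tilde h \in {\rm Bir}\,({\mathbf P}^3)$ preserves $S$ and restricts to $g \circ h$, while $\tilde g^{-1}$ realizes $g^{-1}$. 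Since ${\rm Aut}\,(S) = \langle \iota_1, \iota_2, \iota_3 \rangle$ by Lemma (\ref{lem310}), and since $\iota_1, \iota_3 \in {\rm Aut}\,(\Phi_1)$ while $\iota_2 \in {\rm Aut}\,(\Phi_2)$ (recall $\iota_3 = \iota_1 \circ \varphi_1$ with $\varphi_1 \in {\rm MW}\,(\Phi_1)$, and ${\rm Aut}\,(\Phi_1)$ is a group), it suffices to show that every element of ${\rm Aut}\,(\Phi_1)$ and of ${\rm Aut}\,(\Phi_2)$ is derived from ${\rm Bir}\,({\mathbf P}^3)$.

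Next I would set up the function field picture for $\Phi_1$, the case of $\Phi_2$ being identical after interchanging $L$ and $M$. Let $K := {\mathbf C}(t)$ be the function field of the base ${\mathbf P}^1$ of $\Phi_1$ and let $E/K$ be the generic fiber. By construction $E$ is the cubic residual to $L$ in the generic plane $P_t$ of the pencil of planes through $L$, so $E \subset P_t \simeq {\mathbf P}^2_K$ is a plane cubic over $K$, and the zero section $M$ cuts out a $K$-rational point; thus $E(K) \neq \emptyset$. The projection $\pi_L : {\mathbf P}^3 \cdots \to {\mathbf P}^1$ away from $L$ realizes ${\mathbf P}^3$ birationally over ${\mathbf C}$ as the total space of the ${\mathbf P}^2$-bundle whose generic fiber is $P_t = {\mathbf P}^2_K$. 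Since $\iota_1$ and every element of ${\rm MW}\,(\Phi_1)$ preserve each fiber of $\Phi_1$ (their induced action on the base being trivial), I would identify ${\rm Bir}\,({\mathbf P}^2_K)$ with the group of birational self-maps of ${\mathbf P}^3$ commuting with $\pi_L$; under this identification a $K$-birational self-map of $E$ corresponds to a Cremona transformation of ${\mathbf P}^3$ whose restriction to the dense union of the fibers $E_t$ is the given fiberwise map.

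Now I would apply the strengthening of Theorem (\ref{elliptic}) to $E/K$: as $K$ has characteristic $0$ and $E(K) \neq \emptyset$, a change of coordinates in ${\rm PGL}\,(3, K)$ puts $E$ in Weierstrass form, the inversion becomes the linear involution $(x, y) \mapsto (x, -y)$, and each translation is given by the classical addition formula, so the whole group ${\rm Aut}\,(E/K)$ is derived from ${\rm Bir}\,({\mathbf P}^2_K)$. Spreading these maps out over ${\mathbf P}^1$ via $\pi_L$ produces Cremona transformations of ${\mathbf P}^3$ preserving the pencil $\{P_t\}$ and acting fiberwise as the corresponding operation on $E_t$. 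By Proposition (\ref{lem34}), ${\rm Aut}\,(\Phi_1) = {\rm MW}\,(\Phi_1) \cdot \langle \iota_1 \rangle$, and since both the translations and $\iota_1$ act trivially on the base, every $g \in {\rm Aut}\,(\Phi_1)$ is fiber-preserving with trivial action on the base; hence its restriction to the generic fiber lies in ${\rm Aut}\,(E/K)$ and is realized by some $\tilde g \in {\rm Bir}\,({\mathbf P}^2_K) \subset {\rm Bir}\,({\mathbf P}^3)$.

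Finally I would descend to $S$: the fibers $E_t$ sweep out a dense subset of the irreducible surface $S$, and $\tilde g$ carries the generic $E_t$ onto itself by the prescribed fiber automorphism, so $\tilde g_* S = S$ and $\tilde g|S = g$; hence $g$ is derived from ${\rm Bir}\,({\mathbf P}^3)$. Applying this to $\iota_1, \iota_3 \in {\rm Aut}\,(\Phi_1)$ and, by the symmetric argument with $M$ in place of $L$, to $\iota_2 \in {\rm Aut}\,(\Phi_2)$, and invoking the subgroup remark of the first paragraph, yields the claim. \textbf{The hardest part} will be making the spreading-out of the third paragraph rigorous: one must verify that a $K$-birational self-map of the generic fiber genuinely extends to a single element of ${\rm Bir}\,({\mathbf P}^3)$ via $\pi_L$ and that the resulting map restricts to the intended automorphism of $S$ rather than merely agreeing with it on a dense open set, i.e.\ that $S$ is not contained in the indeterminacy locus. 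The remaining steps are bookkeeping or appeals to the structure of ${\rm Aut}\,(\Phi_i)$ already established.
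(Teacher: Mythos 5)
Your proposal is correct and follows essentially the same route as the paper: view $\Phi_1$ (resp.\ $\Phi_2$) as the restriction of the linear projection from $L$ (resp.\ $M$), so the generic fiber is a plane cubic over $K = {\mathbf C}(t)$ with a rational point given by the section, apply the field-theoretic strengthening of Theorem (\ref{elliptic}) to realize ${\rm MW}\,(\Phi_i)$ and $\iota_i$ inside ${\rm Bir}\,({\mathbf P}^2_K) \subset {\rm Bir}\,({\mathbf P}^3)$, and conclude via the generation ${\rm Aut}\,(S) = \langle \iota_1, \iota_2, \iota_3 \rangle$ of Lemma (\ref{lem310}). Your explicit remarks that the derived elements form a subgroup and that $\iota_3 = \iota_1 \circ \varphi_1 \in {\rm Aut}\,(\Phi_1)$ are steps the paper leaves implicit, as is the spreading-out point you flag as the hardest part.
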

\begin{proof}
$\Phi_1$ is the restriction of 
the linear projection  
$$\Phi_{\vert P - L \vert} : {\mathbf P}^3 \cdots\to {\mathbf P}^1$$
from $L$. Let $\eta = {\rm Spec}\, {\mathbf C}({\mathbf P}^1)$. The fiber ${\mathbf P}_{\eta}^2$ of $\Phi_{\vert P - L \vert}$ over $\eta$ is 
the projective plane defined over ${\mathbf C}({\mathbf P}^1)$ and the generic fiber $S_{\eta}$ of $\Phi_1 = \Phi_{\vert H - L \vert}$ is a smooth cubic curve in ${\mathbf P}_{\eta}^2$ with $O \in S_{\eta}({\mathbf C}({\mathbf P}^1))$, corresponding to the section $M$, defined over ${\mathbf C}({\mathbf P}^1)$. Thus, ${\rm MW}\, (\Phi_1)$ and $\iota_1$ are automorphisms of the smooth cubic curve $S_{\eta}$ over ${\mathbf C}({\mathbf P}^1)$. Therefore by Theorem (\ref{elliptic}), ${\rm MW}\, (\Phi_1)$ and $\iota_1$ are derived from ${\rm Bir}\, ({\mathbf P}_{\eta}^2)$ over ${\mathbf C}(t)$. Since ${\rm Bir}\, ({\mathbf P}_{\eta}^2)$ is a subgroup of ${\rm Bir}\, ({\mathbf P}^3)$, the result follows.  The same is true for the inversion $\iota_2$ of $\Phi_2$. This proves the result.
\end{proof}

\begin{lemma}\label{lem313}
$S$ admits infinitely many different embeddings $S \to {\mathbf P}^3$ up to 
${\rm Aut}\, ({\mathbf P}^3)$. 
Moreover, ${\rm Aut}\, (S)$ is derived from ${\rm Bir}\, ({\mathbf P}^3)$ 
in any embedding $\Phi : S \to {\mathbf P}^3$. 
\end{lemma}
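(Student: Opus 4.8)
The plan is to first reduce embeddings to very ample classes, then transport the conclusion of Lemma (\ref{lem312}) across \emph{all} embeddings by a conjugation argument. For the first assertion, recall by Proposition (\ref{k3})(1) that every embedding $S \to {\mathbf P}^3$ is $\Phi_{\vert H'\vert}$ for a very ample $H'$ with $(H'^2)_S = 4$, and that two such embeddings agree up to ${\rm Aut}\,({\mathbf P}^3)$ exactly when their classes in ${\rm NS}\,(S)$ coincide: the pullback of the hyperplane class under $\Phi_{\vert H'\vert}$ is $H'$, so it is an invariant of the embedding up to ${\rm Aut}\,({\mathbf P}^3)$, while conversely any two embeddings with the same class differ by a choice of basis of $H^0(S, H')$, i.e.\ by an element of ${\rm PGL}\,(4,{\mathbf C}) = {\rm Aut}\,({\mathbf P}^3)$. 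Thus it suffices to produce infinitely many very ample classes of square $4$, and I would take the orbit $\{g^{*}H \mid g \in {\rm Aut}\,(S)\}$: each $g^{*}H$ is very ample with $(g^{*}H)^2_S = 4$ because $\Phi_{\vert g^{*}H\vert} = \Phi_{\vert H\vert}\circ g$ is an embedding, and by Lemma (\ref{lem37}) the stabilizer of $H$ is trivial, so the orbit is in bijection with the infinite group ${\rm Aut}\,(S)$.

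The crucial structural step, which I expect to be the main point, is the claim that \emph{every} very ample $H'$ with $(H'^2)_S = 4$ lies in the ${\rm Aut}\,(S)$-orbit of $H$. This is precisely the mechanism already exploited in Lemma (\ref{lem310}), now applied to an arbitrary $H'$ rather than to one of the form $g^{*}H$: since $H'$ is effective and nef it lies in $\overline{{\rm Amp}}^{e}\,(S)$, so by Lemma (\ref{lem38}) there is $\varphi \in \langle \iota_1,\iota_2,\iota_3\rangle = {\rm Aut}\,(S)$ with $\varphi^{*}H' \in D$; then $\varphi^{*}H'$ is again very ample of square $4$, hence lies in $Q$, and by Lemma (\ref{lem39}) the unique very ample member of $Q$ is $H$, forcing $\varphi^{*}H' = H$. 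Setting $\gamma := \varphi^{-1} \in {\rm Aut}\,(S)$ gives $H' = \gamma^{*}H$.

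Finally I would transfer Lemma (\ref{lem312}) to the arbitrary embedding $\Phi_{\vert H'\vert}$ with $H' = \gamma^{*}H$ by conjugation. The morphisms $\Phi_{\vert H\vert}\circ\gamma$ and $\Phi_{\vert H'\vert}$ are both associated to the complete system $\vert H'\vert$, so there is $\tilde\gamma \in {\rm Aut}\,({\mathbf P}^3)$ with $\Phi_{\vert H\vert}\circ\gamma = \tilde\gamma\circ\Phi_{\vert H'\vert}$; in particular, writing $S_H := \Phi_{\vert H\vert}(S)$ and $S_{H'} := \Phi_{\vert H'\vert}(S)$, we get $S_{H'} = \tilde\gamma^{-1}(S_H)$. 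Now fix $g \in {\rm Aut}\,(S)$ and put $g' := \gamma\, g\, \gamma^{-1} \in {\rm Aut}\,(S)$. A direct substitution of $\Phi_{\vert H'\vert} = \tilde\gamma^{-1}\circ\Phi_{\vert H\vert}\circ\gamma$ shows that the automorphism of $S_{H'}$ induced by $g$ equals $\tilde\gamma^{-1}\circ\bigl(\text{automorphism of } S_H \text{ induced by } g'\bigr)\circ\tilde\gamma$. By Lemma (\ref{lem312}) the inner automorphism is the restriction of some $\widetilde{g'} \in {\rm Bir}\,({\mathbf P}^3)$ with $\widetilde{g'}_{*}S_H = S_H$; hence $\tilde\gamma^{-1}\circ\widetilde{g'}\circ\tilde\gamma \in {\rm Bir}\,({\mathbf P}^3)$ preserves $S_{H'} = \tilde\gamma^{-1}(S_H)$ and restricts to $g$ on it, so $g$ is derived from ${\rm Bir}\,({\mathbf P}^3)$ in the embedding $\Phi_{\vert H'\vert}$.

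The genuine content is the structural step of the second paragraph; once every very ample class of square $4$ is known to form a single ${\rm Aut}\,(S)$-orbit, the remainder is formal bookkeeping, resting only on the facts that ${\rm Aut}\,(S)$ is a group (so $g' = \gamma g \gamma^{-1}$ again belongs to it, which is what lets us invoke Lemma (\ref{lem312}) for all of ${\rm Aut}\,(S)$ at once) and that ${\rm Aut}\,({\mathbf P}^3) \subset {\rm Bir}\,({\mathbf P}^3)$ is stable under the conjugation by $\tilde\gamma$. The one place demanding care is checking that the conjugated birational map indeed carries $S_{H'}$ to itself and restricts to the correct automorphism, which I would verify by the push-forward computation $(\tilde\gamma^{-1}\circ\widetilde{g'}\circ\tilde\gamma)_{*}S_{H'} = \tilde\gamma^{-1}_{*}\widetilde{g'}_{*}(S_H) = \tilde\gamma^{-1}(S_H) = S_{H'}$.
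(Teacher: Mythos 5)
Your proposal is correct and follows essentially the same route as the paper: the infinite orbit ${\rm Aut}\,(S)^{*}H$ (with injectivity from the trivial stabilizer) gives the first assertion, and Lemmas (\ref{lem38}), (\ref{lem39}) show every very ample class of square $4$ lies in the ${\rm Aut}\,(S)$-orbit of $H$. The explicit conjugation argument in your final paragraph is exactly the step the paper compresses into its citation of Proposition (\ref{k3})(3), so you have merely written out what the paper leaves implicit.
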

\begin{proof}
Since ${\rm Aut}\, (S)$ is an infinite group by Lemma (\ref{lem311}), it follows that ${\rm Aut}\, (S)^* H$ is an infinite set. Indeed otherewise, the stabilzer subgroup of $H$ would be an infinite group, a contradiction to 
Proposition (\ref{k3})(2). This proves the first assertion. Let $A$ be a very ample line bundle on $S$ such that $(A^2)_S = 4$. Then, by Lemmas (\ref{lem38}), (\ref{lem39}), there is $g \in {\rm Aut}\, (S)$ such that $g^*A = H$. Hence the last assertion follows from Proposition (\ref{k3})(3). 
\end{proof}

This completes the proof of Theorem (\ref{main3}). Theorem (\ref{main1}) follows from Theorem (\ref{main3}).

\section{Proof of Theorem (\ref{main2}).}

Let $\ell$ be an integer such that $\ell > 5$ and $S_{\ell}$ be a {\it generic} K3 surface such that ${\rm NS}\, (S_{\ell}) = L$ 
where 
$$ L := {\mathbf Z}h_1 + {\mathbf Z}h_2\,\, ,\,\, 
((h_i.h_j)_{S_{\ell}}) = \left(\begin{array}{rr}
4 & 4\ell\\
4\ell & 4
\end{array} \right)\,\, .$$
Here {\it generic} means that $g^* \sigma_S = \pm \sigma_S$ for all $g \in {\rm Aut}\, (S)$. 
Since the lattice $L$ is even of signature 
$(1,1)$, such K3 surfaces $S_{\ell}$ exist (\cite[Corollary 2.9]{Mo84}), are projective and form dense 
subset, in the classical topology, of the $18$-dimensional family of the $L$-polarized K3 surfaces. Here we note that non-generic ones are in the countable union of hyperplanes in the period domain of $L$-polarized K3 surfaces (\cite[Theorem 3.1]{Ni80}).

In this section, we prove the following: 
\begin{theorem}\label{main4}
Let $S := S_{\ell}$ be as above. Then:
\begin{enumerate}
\item ${\rm Aut}\, (S) \simeq {\mathbf Z}$.
\item The set ${\mathcal Q}$ of embeddings $\Phi : S \to {\mathbf P}^3$, up to 
${\rm Aut}\, ({\mathbf P}^3)$, is an infinite set.  
\item For any $(\Phi : S \to {\mathbf P}^3) \in {\mathcal Q}$ and for any $g \in {\rm Aut}\, (S) \setminus \{ id_S \}$, there is no $\tilde{g} \in {\rm Bir}\, ({\mathbf P}^3)$ such that $g = \tilde{g} \vert S$ 
with respect to the embedding $\Phi$. 
\end{enumerate} 
\end{theorem}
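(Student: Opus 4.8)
The plan is to analyze the surface $S=S_\ell$ in close analogy with the Picard-number-$3$ case of Section 3, exploiting the fact that $\mathrm{NS}(S)$ is a rank-$2$ lattice with a single off-diagonal parameter $\ell$. First I would establish part (1), namely $\mathrm{Aut}(S)\simeq\mathbf Z$. The key point is to understand the group $\mathrm O(\mathrm{NS}(S))$ of lattice isometries and then cut it down to those realized by automorphisms. Because $\mathrm{NS}(S)$ has signature $(1,1)$ and discriminant $-16(\ell^2-1)$ with $\ell>5$, the form $4x^2+8\ell xy+4y^2$ represents no $(-2)$-vectors (one should check that $4x^2+8\ell xy+4y^2=-2$ has no integer solutions, so there are no $(-2)$-curves and the nef cone is bounded by the two rational isotropic rays). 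The isometry group of an indefinite binary form of this type, modulo $\pm\mathrm{id}$, is infinite cyclic, generated by the fundamental automorphism coming from the Pell-type equation associated to $\ell$. Using the genericity hypothesis (so $g^*\sigma_S=\pm\sigma_S$, hence $g^*|\mathrm{NS}(S)^*/\mathrm{NS}(S)=\pm\mathrm{id}$) together with the global Torelli theorem and the Torelli-type description of $\mathrm{Aut}(S)$ as the isometries preserving the positive cone, the ample cone, and the period, I would identify $\mathrm{Aut}(S)$ with the index-bounded subgroup of $\mathrm O(\mathrm{NS}(S))$ preserving $\overline{\mathrm{Amp}}(S)$ and acting by $+\mathrm{id}$ on the discriminant group, which is $\simeq\mathbf Z$.

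For part (2), I would show $\mathrm{Aut}(S)^*H$ is infinite for a very ample $H$ with $(H^2)_S=4$, by exactly the argument of Lemma (\ref{lem313}): if the orbit were finite, the stabilizer of $H$ would be infinite, contradicting Proposition (\ref{k3})(2). Producing one very ample $H$ with $(H^2)_S=4$ requires checking the Saint-Donat criterion, i.e.\ that $h_1$ (or a suitable $\mathbf Z$-combination) is very ample: since there are no $(-2)$-curves and no elliptic configurations forcing a hyperelliptic or non-ample model, a class of square $4$ in the interior of the positive cone, not of the form $2D$ with $D$ isotropic and lying on the boundary issues, will be very ample. Then distinct $\mathrm{Aut}(S)$-translates of $H$ give non-equivalent embeddings by Proposition (\ref{k3})(3), so $\mathcal Q$ is infinite.

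Part (3) is the crux and is where I would invoke Takahashi's theorem (\ref{takahashi}). The strategy is: for \emph{every} very ample $H$ with $(H^2)_S=4$, verify the hypothesis of Theorem (\ref{takahashi}), namely that every effective curve $C$ on $S$ with $(C.H)_S<16$ is cut out by a hypersurface, i.e.\ $[C]\in\mathbf Z H$. Since $\mathrm{Pic}(S)$ has rank $2$, an effective class $C$ failing this would be a $\mathbf Z$-combination $aH+bB$ with $B$ not proportional to $H$; I would use the intersection form (with the large parameter $\ell>5$ controlling the geometry) to show that any effective $C$ with $[C]\notin\mathbf Z H$ and $(C.H)_S<16$ is impossible — concretely, the smallest positive intersection numbers $(C.H)_S$ achieved by primitive effective classes not proportional to $H$ exceed $16$ precisely because $\ell>5$. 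The hard part will be pinning this numerical bound down uniformly over all very ample $H$ in the infinite orbit, rather than for the single distinguished $H$; here I would use the $\mathrm{Aut}(S)$-equivariance, reducing an arbitrary very ample $H'=g^*H$ to the fixed $H$ by applying $g$, so that the curve-degree condition is checked once. Granting the hypothesis, Takahashi gives that for any $\tilde g\in\mathrm{Bir}(\mathbf P^3)\setminus\mathrm{Aut}(\mathbf P^3)$ one has $\tilde g_*S\neq S$, so no such $\tilde g$ restricts to an automorphism; and by Lemma-type reasoning as in (\ref{lem37}), no nontrivial automorphism comes from $\mathrm{Aut}(\mathbf P^3)$ either, since the stabilizer of $H$ is trivial (the only isometry fixing $H$ and acting as $+\mathrm{id}$ on the discriminant is the identity). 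Combining these two exclusions over every embedding in $\mathcal Q$ yields part (3).
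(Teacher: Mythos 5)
Your overall architecture parallels the paper's (Saint--Donat for very ampleness, the stabilizer argument via Proposition (\ref{k3})(2) for infinitude of embeddings, and Takahashi's Theorem (\ref{takahashi}) fed by a numerical bound for part (3)), but two of your steps have genuine gaps. The most serious one is in part (1). Your claim that ``the isometry group of an indefinite binary form of this type, modulo $\pm\mathrm{id}$, is infinite cyclic'' is false for this lattice: the form $4x^2+8\ell xy+4y^2$ is symmetric in $x,y$, so the swap $h_1\leftrightarrow h_2$ is an isometry preserving the positive cone and exchanging the two boundary rays; together with the Pell subgroup it generates an infinite \emph{dihedral} group. (Also, the two isotropic rays are irrational, not rational as you wrote: the form represents neither $0$ nor $\pm 2$, since it is divisible by $4$ and $\ell^2-1$ is not a perfect square; this irrationality is exactly what makes the Pell group infinite, and in the paper it is what makes Sterk's theorem yield $\vert {\rm Aut}\,(S)\vert=\infty$.) Consequently, after identifying ${\rm Aut}\,(S)$ with the subgroup of ${\rm O}({\rm NS}\,(S))$ preserving the ample cone and acting by $\pm id$ on the discriminant group, you still must show that no ray-swapping involution is realized by an automorphism --- otherwise ${\rm Aut}\,(S)$ could be infinite dihedral rather than $\mathbf{Z}$. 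This is precisely the hard point of the paper's Lemma (\ref{lem46}): symplectic involutions are excluded by Nikulin's eight-fixed-point theorem combined with the topological Lefschetz fixed point formula (the trace on ${\rm NS}\,(S)$ would have to be $-14$, impossible in rank $2$), and anti-symplectic involutions are excluded via the isomorphism $\langle h\rangle^*/\langle h\rangle \simeq (\langle h\rangle^{\perp})^*/\langle h\rangle^{\perp}$, which would force $(h^2)_S=2$, contradicting Lemma (\ref{lem44})(1). Your proposal contains no substitute for this step.

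In part (3), your worry about uniformity ``over all very ample $H$ in the infinite orbit,'' and your proposed fix by ${\rm Aut}\,(S)$-equivariance, presuppose that all very ample classes of square $4$ lie in a single ${\rm Aut}\,(S)$-orbit. That is neither established nor to be expected: for instance $h_1$ and $h_2$ are exchanged by the lattice swap, which (once part (1) is proved) is an isometry \emph{not} induced by any automorphism, so they may well lie in different orbits. Fortunately the reduction is unnecessary, because the bound is automatically uniform in $H$: it uses only $(H^2)_S=4$ and the determinant of ${\rm NS}\,(S)$. If $c=[C]$ were linearly independent of $h=H\vert S$, then $N:=\langle c,h\rangle$ is a finite-index sublattice of ${\rm NS}\,(S)$, so $\vert\det N\vert$ is a positive multiple of $\vert \det {\rm NS}\,(S)\vert = 16(\ell^2-1)\ge 560$ for $\ell>5$; on the other hand $(c^2)_S\ge 0$ (there are no $(-2)$-curves), hence $\vert\det N\vert=(c.h)_S^2-(c^2)_S(h^2)_S\le (c.h)_S^2<16^2=256$, a contradiction. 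This is the paper's Lemma (\ref{lem48}), valid verbatim for every $h\in\mathcal{H}$, and you should replace your equivariance reduction by it. With that done, your use of Theorem (\ref{takahashi}) for $\tilde g\in{\rm Bir}\,(\mathbf{P}^3)\setminus{\rm Aut}\,(\mathbf{P}^3)$ and the exclusion of ${\rm Aut}\,(\mathbf{P}^3)$ via the triviality of the stabilizer of $H$ coincide with the paper's Lemma (\ref{lem49}), and your part (2) is essentially identical to the paper's Lemma (\ref{lem47}).
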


By replacing $(h_1, h_2)$ by $(-h_1, -h_2)$ if necessary, we may and will assume that $h_1$ is in the positive cone $P(S)$.

\begin{lemma}\label{lem44}
$S$ satisfies:
\begin{enumerate}
\item 
${\rm NS}\, (S)$ represents neither $0$ nor $\pm 2$, i.e., $(d^2)_S \not= 0, \pm 2$ for $d \in {\rm NS}\, (S)$. 
\item
$${\rm NS}\,(S)^*/{\rm NS}\, (S) = \langle \frac{h_1}{4}, \frac{h_2 - \ell h_1}{4(\ell^2 -1)} \rangle \simeq {\mathbf Z}_{4} \oplus {\mathbf Z}_{4(\ell^2 -1)}
\,\, .$$
\end{enumerate} 
\end{lemma}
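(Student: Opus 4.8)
The plan is to prove the two parts separately, using an orthogonal diagonalization of $L$ that handles (2) cleanly and also clarifies (1).

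First, for (1), I would write a general class as $d = x h_1 + y h_2$ with $x, y \in {\mathbf Z}$ and compute from the Gram matrix that $(d^2)_S = 4(x^2 + 2\ell xy + y^2)$. Since this is always divisible by $4$, it can never equal $\pm 2$, disposing of that case at once. For the value $0$ it remains to show that the binary form $x^2 + 2\ell xy + y^2$ has no nontrivial integral zero. If $y \neq 0$ I would set $r = x/y$ and solve $r^2 + 2\ell r + 1 = 0$, obtaining $r = -\ell \pm \sqrt{\ell^2 - 1}$; a rational (hence integral) solution exists only if $\ell^2 - 1$ is a perfect square. But for $\ell > 1$ one has $(\ell-1)^2 < \ell^2 - 1 < \ell^2$, so $\ell^2 - 1$ lies strictly between consecutive squares and is never one (the hypothesis $\ell > 5$ is far more than enough). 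The case $y = 0$ forces $x = 0$. Hence $(d^2)_S = 0$ only for $d = 0$, which proves (1).

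The key device for (2) is the auxiliary vector $w := h_2 - \ell h_1$. A direct computation from the Gram matrix gives $(w.h_1)_S = 4\ell - 4\ell = 0$ and $(w^2)_S = 4 - 4\ell^2 = -4(\ell^2 - 1)$, while $(h_1^2)_S = 4$. Since the change of basis from $\{h_1, h_2\}$ to $\{h_1, w\}$ is unimodular, $\{h_1, w\}$ is again a ${\mathbf Z}$-basis of $L = {\rm NS}\,(S)$, and the vanishing of $(w.h_1)_S$ exhibits $L$ as the orthogonal direct sum $L = {\mathbf Z}h_1 \perp {\mathbf Z}w \cong \langle 4 \rangle \perp \langle -4(\ell^2-1)\rangle$. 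For an orthogonal direct sum the discriminant group splits as the direct sum of the discriminant groups of the summands, so I would reduce to the rank-one case: for a lattice $\langle a \rangle = {\mathbf Z}e$ with $(e^2) = a$, the dual is ${\mathbf Z}(e/a)$ and the discriminant group is cyclic of order $|a|$ generated by $\overline{e/a}$. Applying this to the two summands produces the generator $\overline{h_1/4}$ of order $4$ and the generator $\overline{w/(4(\ell^2-1))} = \overline{(h_2 - \ell h_1)/(4(\ell^2-1))}$ of order $4(\ell^2-1)$, with the sum direct by orthogonality. This gives exactly the asserted isomorphism ${\rm NS}\,(S)^*/{\rm NS}\,(S) \cong {\mathbf Z}_4 \oplus {\mathbf Z}_{4(\ell^2-1)}$; as a consistency check, the product of the orders is $16(\ell^2-1) = |\det L|$.

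I expect no serious obstacle: both parts reduce to elementary lattice arithmetic. The two points requiring genuine care are the anisotropy argument in (1), where one must invoke $\ell > 1$ to rule out $\ell^2 - 1$ being a perfect square, and, in (2), verifying that the two displayed classes have \emph{exactly} the claimed orders rather than proper divisors thereof — a check the orthogonal decomposition $L = {\mathbf Z}h_1 \perp {\mathbf Z}w$ (together with the primitivity of $w$) makes transparent.
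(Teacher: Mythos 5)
Your proof is correct and takes essentially the same route as the paper: part (1) rests on the same two facts (divisibility of $(d^2)_S$ by $4$ rules out $\pm 2$, and the irrationality of $\sqrt{\ell^2-1}$ rules out $0$ --- you phrase this via the quadratic formula where the paper completes the square), and part (2) diagonalizes the Gram matrix by the unimodular substitution $h_2 \mapsto h_2 - \ell h_1$, which is precisely the elementary transformation the paper invokes when it cites elementary divisor theory. Your explicit orthogonal splitting ${\rm NS}\,(S) = {\mathbf Z}h_1 \perp {\mathbf Z}w$ with $w = h_2 - \ell h_1$ simply makes transparent the paper's one-line claim that ``this process also identifies the generators.''
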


\begin{proof} For $xh_1 + yh_2 \in {\rm NS}\, (S)_{\mathbf R}$, observe that
$$((xh_1 + yh_2)^2)_S = 4x^2 + 8\ell xy + 4y^2 
= 4((x+\ell y)^2 - (\ell^2 -1)y^2)\,\, .$$
The first equality shows that $(d^2)_S$ is divisible by $4$ for any $d \in {\rm NS}\, (S)$. Hence ${\rm NS}\, (S)$ does not represent $\pm 2$. Since $\sqrt{\ell^2 -1}$ is irrational by $\ell \ge 2$, it follows that $(d^2)_S \not= 0$ for any $d \in {\rm NS}\, (S)$. This proves (1). 
The assertion (2) follows from an explicit computation based on the elementary divisor theorem. Indeed, by performing elementary transformations for the matrix $((h_i.h_j)_S)$ in ${\mathbf Z}$, one can transform $((h_i.h_j)_S)$ to the 
diagonal matrix ${\rm diag}\, (4, 4(\ell^2 -1))$ such that 
$4 \vert 4(\ell^2 -1)$. This process also identifies the generators of ${\rm NS}\,(S)^*/{\rm NS}\, (S)$ as described. 
\end{proof}

In what follows, we set:
$$v_1 := (-\ell + \sqrt{\ell^2 -1})h_1 + h_2\,\, ,\,\, v_2 := h_1 + (-\ell + \sqrt{\ell^2 -1})h_2\,\, .$$
Note that $v_1$ and $v_2$ are both irrational. 

\begin{lemma}\label{lem45}
$$\overline{{\rm Amp}}\, (S) = {\overline P}\, (S) = {\mathbf R}_{\ge 0}v_1 + {\mathbf R}_{\ge 0}v_2\,\, .$$ 
\end{lemma}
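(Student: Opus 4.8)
The plan is to establish the two equalities separately. For the inner equality $\overline{{\rm Amp}}\,(S) = {\overline P}\,(S)$, I would invoke the standard description of the nef cone of a K3 surface: the ample cone ${\rm Amp}\,(S)$ is exactly the set of $x \in P(S)$ with $(x.C)_S > 0$ for every irreducible curve $C$ with $(C^2)_S = -2$, and $\overline{{\rm Amp}}\,(S)$ is its closure. By Lemma (\ref{lem44})(1), ${\rm NS}\,(S)$ represents no $-2$, so $S$ carries no $(-2)$-curve whatsoever. Consequently there are no walls inside $P(S)$, so ${\rm Amp}\,(S) = P(S)$, and passing to closures gives $\overline{{\rm Amp}}\,(S) = {\overline P}\,(S)$.

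For the outer equality ${\overline P}\,(S) = {\mathbf R}_{\ge 0}v_1 + {\mathbf R}_{\ge 0}v_2$, I would use that ${\rm NS}\,(S)$ has rank $2$ and signature $(1,1)$, so $P(S)$ is one of the two components of $\{x \mid (x^2)_S > 0\}$ and its closure ${\overline P}\,(S)$ is a two-dimensional cone bounded by the two isotropic rays. To find these rays I would solve $(x^2)_S = 0$ using the completed-square identity from the proof of Lemma (\ref{lem44}),
$$((xh_1 + yh_2)^2)_S = 4\bigl((x + \ell y)^2 - (\ell^2 - 1)y^2\bigr)\,\, ,$$
which yields $x = (-\ell \pm \sqrt{\ell^2-1})\,y$; normalizing gives the ray through $v_1$, and by the $h_1 \leftrightarrow h_2$ symmetry of the form the ray through $v_2$ as well.

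It then remains to check that $v_1$ and $v_2$ lie on the boundary of the correct component, namely the one containing the ample class $h_1$. I would verify this through the sign of the intersection with $h_1$: a direct computation gives $(v_1.h_1)_S = 4\sqrt{\ell^2-1} > 0$ and $(v_2.h_1)_S = 4(1 - \ell^2 + \ell\sqrt{\ell^2-1}) > 0$, the latter positivity coming from $\ell\sqrt{\ell^2-1} > \ell^2 - 1$. Since $v_1$ and $v_2$ are non-proportional isotropic vectors pairing positively with $h_1$, they span the two boundary rays of ${\overline P}\,(S)$, yielding the claimed equality.

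The main obstacle is largely bookkeeping: correctly pinning down which component of $\{x \mid (x^2)_S > 0\}$ is $P(S)$ and confirming that both isotropic vectors $v_1$, $v_2$ close up that specific component rather than its opposite — this is exactly what the sign computations $(v_i.h_1)_S > 0$ settle. The appeal to ``no $(-2)$-curves $\Rightarrow {\rm Amp}\,(S) = P(S)$'' is the one place where a structural K3 input, rather than plain linear algebra, is needed.
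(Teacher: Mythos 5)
Your proposal is correct and follows essentially the same route as the paper: both equalities are deduced from Lemma (\ref{lem44})(1) --- the absence of $(-2)$-classes gives $\overline{{\rm Amp}}\,(S) = {\overline P}\,(S)$ (no $\mathbf{P}^1$'s, hence no walls), and the completed-square identity $((xh_1+yh_2)^2)_S = 4((x+\ell y)^2 - (\ell^2-1)y^2)$ identifies the isotropic boundary rays through $v_1$ and $v_2$. Your explicit sign checks $(v_1.h_1)_S = 4\sqrt{\ell^2-1} > 0$ and $(v_2.h_1)_S = 4(1-\ell^2+\ell\sqrt{\ell^2-1}) > 0$ merely spell out what the paper compresses into ``since $h_1$ is in the positive cone,'' and they are computed correctly.
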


\begin{proof}
Since $h_1$ is in the positive cone, the second equality follows from the second equality in the proof of Lemma (\ref{lem44})(1). Since ${\rm NS}\, (S)$ does not represent $-2$ by Lemma (\ref{lem44})(1), $S$ contains no ${\mathbf P}^1$. 
This implies the first equality. 
\end{proof}

\begin{lemma}\label{lem46}
$S$ satifies:
\begin{enumerate}
\item Let $g \in {\rm Aut}\, (S)$ such that $g^* \vert {\rm NS}\, (S)$ is of finite order. Then $g$ is of finite order.
\item ${\rm Aut}\, (S)$ has no element of finite order other than $id_S$.
\item For each $g \in {\rm Aut}\, (S)$, there are positive real numbers 
$\alpha(g)$, $\beta(g)$ such that $g^*v_1 = \alpha(g) v_1$ and 
$g^*v_2 = \beta(g) v_2$.
\item
${\rm Aut}\, (S) = \langle g_0 \rangle \simeq {\mathbf Z}.$
\end{enumerate} 
\end{lemma}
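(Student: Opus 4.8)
The plan is to prove Lemma~\ref{lem46} in the four stated parts, using the structure of $\overline{\rm Amp}\,(S)$ from Lemma~\ref{lem45} and the genericity/representation facts from Lemma~\ref{lem44}.

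\medskip

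\noindent\emph{Part (1).}
Suppose $g^*\vert{\rm NS}\,(S)$ has finite order, say $(g^*)^N = id$ on ${\rm NS}\,(S)$. Since $S$ is generic, $g^*\sigma_S = \pm\sigma_S$, so $g^*\vert T(S) = \pm id$ by Nikulin, and since ${\rm rank}\,T(S)$ is odd here too (it is $20$? no: here $\rho=2$, so $T(S)$ has rank $20$, even). I must be careful: the argument that pinned down the sign in Section~3 used oddness of the transcendental rank, which fails now. So instead I would argue as follows. Replacing $g$ by $g^2$ we may assume $g^*\sigma_S=\sigma_S$. Then $(g^2)^{*} = id$ on both ${\rm NS}\,(S)$ (after raising to a further power $N$) and on $T(S)$, hence on all of $H^2(S,{\mathbf Z})$; by the global Torelli theorem some power of $g$ is $id_S$, so $g$ has finite order. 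This is the content of Part~(1): finiteness of the lattice action forces finiteness of $g$ via Torelli, because the only freedom in the transcendental part is the sign of $\sigma_S$, and a power of $g$ kills that sign as well.

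\medskip

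\noindent\emph{Parts (2) and (3).}
For (3), observe that any $g^*$ preserves $\overline{\rm Amp}\,(S)$, hence permutes its two extremal rays ${\mathbf R}_{\ge0}v_1$ and ${\mathbf R}_{\ge0}v_2$. Since $v_1,v_2$ are irrational (eigenvectors of the isometry group with irrational slopes) while $g^*$ is a \emph{rational} isometry, $g^*$ cannot interchange the two rays: swapping them would force a rational relation between $v_1$ and $v_2$ incompatible with $\sqrt{\ell^2-1}\notin{\mathbf Q}$. Hence each ray is preserved, giving positive scalars $\alpha(g),\beta(g)$ with $g^*v_1=\alpha(g)v_1$, $g^*v_2=\beta(g)v_2$; positivity is because $g^*$ preserves the nef cone and thus cannot negate a nef class. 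This proves (3). For (2), note that if $g$ has finite order then so does $g^*$, so $\alpha(g)$ and $\beta(g)$ are roots of unity; being positive reals they equal $1$, so $g^*v_1=v_1$, $g^*v_2=v_2$, whence $g^*=id$ on ${\rm NS}\,(S)_{\mathbf R}$ and hence on ${\rm NS}\,(S)$. Then $g^*\sigma_S=\pm\sigma_S$; the sign is detected on ${\rm NS}\,(S)^*/{\rm NS}\,(S)$, but $g^*=id$ there forces the $+$ sign, so $g^*=id$ on all of $H^2(S,{\mathbf Z})$ and $g=id_S$ by Torelli.

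\medskip

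\noindent\emph{Part (4).}
Consider the map $g\mapsto\log\alpha(g)$, a homomorphism ${\rm Aut}\,(S)\to({\mathbf R},+)$. Its kernel consists of $g$ with $\alpha(g)=1$; since $g^*$ is an isometry preserving both rays, $\alpha(g)\beta(g)$ is determined by the determinant and the norms, and in fact $\alpha(g)=1$ forces $g^*=id$ on ${\rm NS}\,(S)$, hence $g$ of finite order by (1), hence $g=id_S$ by (2). So the homomorphism is injective, and ${\rm Aut}\,(S)$ embeds into $({\mathbf R},+)$. The image is a subgroup of $({\mathbf R},+)$; I must show it is infinite cyclic rather than dense. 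The point is that the $\alpha(g)$ are units in the real quadratic order ${\mathbf Z}[\sqrt{\ell^2-1}]$ attached to the eigenvalue problem for the rational isometries preserving the form, so the image is a subgroup of a finitely generated (rank one) unit group, hence infinite cyclic; that ${\rm Aut}\,(S)$ is infinite (not trivial) follows because the lattice $L$ of signature $(1,1)$ with no $-2$-vectors has infinite orthogonal group acting on $S$ via Torelli. Thus ${\rm Aut}\,(S)=\langle g_0\rangle\simeq{\mathbf Z}$.

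\medskip

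\noindent The main obstacle I anticipate is Part~(4): ruling out a dense, non-cyclic image in $({\mathbf R},+)$ and confirming infiniteness. The clean way is to identify $g^*$ with elements of the stabilizer of the pair of isotropic-like rays inside ${\rm O}(L)$, show this stabilizer is (up to finite index and the inversion swapping rays, which is excluded by irrationality) a rank-one group of units in a real quadratic field via Dirichlet, and then transport this through the injective homomorphism $*$ (injectivity coming from Parts~(1)--(2) exactly as in Lemma~\ref{lem32}(2)) to conclude ${\rm Aut}\,(S)\cong{\mathbf Z}$.
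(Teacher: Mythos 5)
Your part (1) is fine and agrees with the paper's argument. The fatal problem is in part (3), and it propagates to (2) and hence to the whole proof. You claim $g^*$ cannot interchange the rays ${\mathbf R}_{\ge 0}v_1$ and ${\mathbf R}_{\ge 0}v_2$ because a \emph{rational} isometry cannot swap \emph{irrational} rays. This is false: the integral map $h_1 \mapsto h_2$, $h_2 \mapsto h_1$ is an isometry of $L$ (the Gram matrix is symmetric with equal diagonal entries), preserves the positive cone, and sends $v_1 = (-\ell + \sqrt{\ell^2-1})h_1 + h_2$ exactly to $v_2 = h_1 + (-\ell + \sqrt{\ell^2-1})h_2$. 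So integral isometries swapping the two irrational rays certainly exist; irrationality is no obstruction at the level of ${\rm O}({\rm NS}\,(S))$. What must be ruled out is that such an isometry is induced by an \emph{automorphism} of $S$, and that is exactly the hard case you skip. Moreover your proof of (2) presupposes (3) (it begins ``$\alpha(g)$ and $\beta(g)$ are roots of unity,'' which only makes sense if each ray is preserved), while the only correct route to (3) is via (2): if $g^*$ swaps the rays, integrality of the characteristic polynomial plus positivity force $\alpha\beta = 1$, so $g^*\vert {\rm NS}\,(S)$ has finite order, hence $g$ has finite order by (1), and one then needs (2) to conclude $g = id_S$, contradicting the swap. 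Once the bogus irrationality claim is removed, your (2) and (3) are circular and the swap case is never excluded.

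That excluded case is the real content of the lemma, and the paper handles it inside the proof of (2): a swapping $g$ satisfies $g^2 = id_S$ (by the ray-preserving case applied to $g^2$), and a nontrivial involution is then killed by splitting on $g^*\sigma_S = \pm\sigma_S$. If $g^*\sigma_S = \sigma_S$, Nikulin's theorem gives exactly $8$ fixed points, so the topological Lefschetz fixed point formula forces ${\rm tr}\,(g^*\vert {\rm NS}\,(S)) = -14$, impossible in rank $2$. If $g^*\sigma_S = -\sigma_S$, a primitive ample class $h$ with $g^*h = h$ would give $id = -id$ on $\langle h \rangle^*/\langle h \rangle$ via the discriminant isomorphism, forcing $(h^2)_S = 2$ and contradicting Lemma (\ref{lem44})(1). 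Nothing in your proposal substitutes for this fixed-point/discriminant analysis. By contrast, your part (4) is essentially workable and is a genuine (slight) variant of the paper's: injectivity of $\alpha$ plus the observation that the values $\alpha(g)$ are positive units in the real quadratic order attached to $\sqrt{\ell^2-1}$ yields cyclicity by Dirichlet, where the paper instead extracts discreteness from the integer trace and a minimal element; for infiniteness the paper uses Sterk's theorem together with irrationality of the nef cone, whereas your ``${\rm O}(L)$ is infinite and acts via Torelli'' sketch would still need the lattice-theoretic extension argument spelled out. But (4) cannot be salvaged before (2) and (3) are repaired.
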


\begin{proof}
Let $g \in {\rm Aut}\, (S)$. Then either $g^*v_1 = \alpha v_1$ and $g^*v_2 = \beta v_2$ or $g^*v_1 = \alpha v_2$ and $g^*v_2 = \beta v_1$ for some real positive numbers $\alpha$ and $\beta$. 

The assertion (1) follows from $g^*\vert T(S) = \pm id$ and the global Torelli 
theorem for K3 surfaces. 

Let us prove (2). 
Assume that $g$ is of finite order. Then, in the first case, $\alpha = \beta = 1$, whence $g^* \vert {\rm NS}\, (S) = id$. Then $g^* \vert {\rm NS}\, (S)^*/{\rm NS}\, (S) = id$. On the other hand, $g^* \vert T(S) = \pm id$, by our genericity assumption. It follows that $g^* \vert T(S) = id$. Hence $g = id_S$ by the global Torelli theorem for K3 surfaces. In the second case, $(g^2)^*v_1 = \alpha^2 v_1$ and $(g^2)^*v_2 = \beta^2 v_2$. 
Hence $g^2 = id$ as we have shown. {\it Assume to the contrary that} $g \not= id_S$. If $g^*\sigma_S = \sigma_S$, then $g$ would have exactly $8$ fixed points (\cite[Section 5]{Ni80}). Thus ${\rm tr}\, (g^* \vert {\rm NS}\, (S)) = -14$ by 
the Lefschetz fixed point formula. However, this is impossible, bescause 
${\rm rank}\, {\rm NS}\, (S) = 2$ and $g^{*} \vert {\rm NS}\, (S)$ is of order $2$, so that the eigenvalues of $g^{*} \vert {\rm NS}\, (S)$ is in $\{\pm 1\}$ with multiplicity at most $2$. Consider the case $g^*\sigma_S = -\sigma_S$. Since $g$ is of finite order, there is an ample class $h$ such that $g^*h = h$. We can choose $h$ to be primitive. Then by $\langle h \rangle^*/\langle h \rangle \simeq (\langle h \rangle^{\perp})^*/\langle h \rangle^{\perp}$ and our case assumption, 
we would have $id = -id$ on $\langle h \rangle^*/\langle h \rangle$. Hence $(h^2)_S = 2$, a contradiction to Lemma (\ref{lem44})(1). 
This proves the assertion (2). 

Let us show (3). {\it Assume to the contrary that} $g^*v_1 = \alpha v_2$ and $g^*v_2 = \beta v_1$. Then, $g \not= id_S$ and the characteristic polynomial of $g^* \vert {\rm NS}\, (S)$ would be $t^2 - \alpha \beta \in {\mathbf Z}[t]$. Since $g^* \vert {\rm NS}\, (S) \in {\rm O}({\rm NS}\, (S))$ and $\alpha, \beta >0$, it would follow that 
$\alpha \beta = 1$. Hence $g^* \vert {\rm NS}\, (S)$ would of finite order, whence so would be $g$ by (1), a contradiction to (2). 

Let us show (4). By Sterk (\cite[Section 2]{St85}), the action ${\rm Aut}\, (S)^{*}$ 
on $\overline{{\rm Amp}}^e\, (S)$ has a finite rational polyhedral fundamental domain. Since $\overline{{\rm Amp}}^e\, (S)$ is irrational, it follows that 
$\vert {\rm Aut}\, (S) \vert = \infty$. Let ${\mathbf R}_{> 0}$ be the multiplicative group of positive real numbers. By (3), we have a well-defined group homomorphism 
$$\alpha : {\rm Aut}\, (S) \to {\mathbf R}^{> 0}\,\, ,\,\, g \mapsto \alpha(g)\,\, ,$$
where $g^*v_1 = \alpha(g)v_1$. If $\alpha(g) = 1$, then $\beta(g) = 1$ by $g^* \vert {\rm NS}\, (S) \in {\rm O}({\rm NS}\, (S))$ and $\alpha, \beta >0$. Then 
$g^* \vert {\rm NS}\, (S) = id$, whence $g = id_S$ by (1) and (2). Thus $\alpha$ is injective. Let 
$${\mathcal S} :=  \{ \alpha(g)\, \vert\,  g \in {\rm Aut}\, (S)\, , 
\alpha(g) > 1\}\,\, .$$
Since $\vert {\rm Aut}\, (S) \vert = \infty$, $\alpha$ is injective,  
and since $\alpha(g^{-1}) > 1$ if $\alpha(g) <1$, it follows that 
${\mathcal S}$ 
is not empty. Note that $\alpha(g)$ and $\beta(g)$ are zeros of quadratic equation of the form $t^2 - at + 1 = 0$ with $a \in {\mathbf Z}$. By the root formula of quadratic equation, we have $\alpha_{a_1} < \alpha_{a_2}$. Here $\alpha_{a_i}$ is the largest solution of $t^2 - a_i t + 1 = 0$ and $a_1$ and $a_2$ are integers such that $3 \le a_1 < a_2$. It follows that the set ${\mathcal S}$ has the minimum, say $\alpha_0 = \alpha(g_0) > 1$. 
Then for all $h \in {\rm Aut}\, (S)$, we have $h = g_0^n$ for some $n \in {\mathbf Z}$. In fact, we can take an integer $n$ such that $1 \le \alpha(h)\alpha(g_0)^{-n} < \alpha_0$. Then $\alpha(hg_0^{-n}) = 1$ by definition of $\alpha_0$. Since $\alpha$ is injective, it follows that $h = g_0^n$. Hence ${\rm Aut}\, (S) = \langle g_0 \rangle \simeq {\mathbf Z}$ as claimed. 
\end{proof}
In what follows, 
$${\mathcal H} := \{h \in \overline{{\rm Amp}}\, (S) \cap {\rm NS}\, (S)\, 
\vert\, (h^2)_S = 4\}\,\, ,$$
and $g_0$ is a generator of ${\rm Aut}\, (S)$ as in Lemma (\ref{lem46}) (4).

\begin{lemma}\label{lem47}
$S$ satisfies:
\begin{enumerate}
\item
Any $h \in {\mathcal H}$ is very ample. 
\item $\vert {\mathcal H} \vert = \infty$. 
\end{enumerate}
\end{lemma}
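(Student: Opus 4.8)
The plan is to treat the two assertions separately: part (1) will rest on Saint-Donat's very ampleness criterion for projective models of K3 surfaces, leveraging the crucial Lemma \ref{lem44}(1) that ${\rm NS}\,(S)$ represents none of $0$, $\pm 2$; part (2) will follow from an orbit-counting argument in the spirit of Lemma \ref{lem313}.

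For (1), I would first upgrade ``nef'' to ``ample''. Any $h \in {\mathcal H}$ is integral, hence rational, and lies in $\overline{{\rm Amp}}\,(S) = \overline{P}\,(S) = {\mathbf R}_{\ge 0}v_1 + {\mathbf R}_{\ge 0}v_2$ by Lemma \ref{lem45}. Since $v_1$ and $v_2$ are irrational, the two boundary rays of $\overline{P}\,(S)$ contain no nonzero rational point, so $h$ must lie in the open positive cone $P(S)$. As ${\rm NS}\,(S)$ represents no $-2$ (Lemma \ref{lem44}(1)), $S$ contains no $(-2)$-curve, so the ample cone coincides with $P(S)$; hence $h$ is ample. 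I would then invoke the very ampleness criterion of Saint-Donat (\cite{SD74}, see also \cite[Chapter VIII]{BHPV04}): for a nef class $h$ with $(h^2)_S = 4$, very ampleness can fail only if there is an irreducible curve $C$ with $(C^2)_S = 0$ and $(h.C)_S \in \{1,2\}$ (a low-degree elliptic pencil), or with $(C^2)_S = -2$ and $(h.C)_S = 0$ (a contracted rational curve), the residual hyperelliptic alternatives requiring a class of square $2$. Every one of these forces the existence in ${\rm NS}\,(S)$ of a class of square $0$, $-2$, or $2$, and all three are excluded by Lemma \ref{lem44}(1). Therefore every $h \in {\mathcal H}$ is very ample.

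For (2), I would first record that ${\mathcal H} \neq \emptyset$: indeed $h_1 \in P(S) \subset \overline{{\rm Amp}}\,(S)$ with $(h_1^2)_S = 4$, so $h_1 \in {\mathcal H}$. The set ${\mathcal H}$ is stable under ${\rm Aut}\,(S)^*$, since automorphisms preserve ${\rm NS}\,(S)$, the intersection form, and the nef cone; thus ${\rm Aut}\,(S)^* h_1 \subseteq {\mathcal H}$ and it suffices to prove this orbit is infinite. By part (1), $h_1$ is very ample, so by Proposition \ref{k3}(2) its stabilizer $\{g \in {\rm Aut}\,(S) \mid g^*h_1 = h_1\}$ is finite. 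On the other hand ${\rm Aut}\,(S) \simeq {\mathbf Z}$ is infinite (Lemma \ref{lem46}(4)) and acts on ${\rm NS}\,(S)$ through an injective homomorphism, since the homomorphism $\alpha$ of Lemma \ref{lem46} factors through $*$; hence ${\rm Aut}\,(S)^*$ is infinite. An infinite group acting with finite point-stabilizer has an infinite orbit, so $\vert {\mathcal H} \vert = \infty$. As an alternative, purely arithmetic, route, writing $h = xh_1 + yh_2$ the condition $(h^2)_S = 4$ becomes the Pell equation $(x+\ell y)^2 - (\ell^2-1)y^2 = 1$, which has infinitely many integer solutions because $\ell^2 - 1$ is not a perfect square.

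The main obstacle I anticipate is in part (1): one must pin down the exact shape of Saint-Donat's criterion in the case $(h^2)_S = 4$ and check that every mode of failure of very ampleness—base points, a $2:1$ map onto a quadric, the hyperelliptic possibilities—is genuinely detected by a class of square $0$, $\pm 2$ in ${\rm NS}\,(S)$, so that Lemma \ref{lem44}(1) truly closes all cases. Once (1) is established, the orbit argument in (2) is routine, the only point worth recording being that ${\rm Aut}\,(S)^*$ is infinite, which we get for free from the injectivity of $\alpha$ in Lemma \ref{lem46}.
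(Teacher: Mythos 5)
Your proof is correct and follows essentially the same route as the paper: for (1) the paper likewise combines Lemma \ref{lem44}(1) with Saint-Donat (citing \cite[2.7]{SD74} for freeness and \cite[Theorem 5.2]{SD74} for the embedding), and for (2) it runs your orbit argument in contrapositive form---if $\mathcal{H}$ were finite, some power $g_0^m$ ($m \neq 0$) would fix the very ample class $h_1$ and hence have finite order by Proposition \ref{k3}(2), contradicting ${\rm Aut}\,(S) \simeq {\mathbf Z}$. Your alternative Pell-equation count of classes of square $4$ is a valid extra route not taken by the paper, but it is not needed.
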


\begin{proof}
Since $S$ contains no ${\mathbf P}^1$, the complete linear system $\vert h \vert$ is free (\cite[2.7]{SD74}). Since there is no $d \in {\rm NS}\, (S)$ with 
$(d^2)_S \in \{0, \pm -2 \}$ by Lemma (\ref{lem44})(1), it follows 
that $\Phi_{\vert h \vert}$ is an embedding (\cite[Theorem 5.2]{SD74}). This proves (1). 
Note that $h_1 \in {\mathcal H}$ and ${\mathcal H} \not= \emptyset$. 
{\it Assume to the contrary that} $\vert {\mathcal H} \vert < \infty$. Then 
$\{(g_0^n)^*h_1\, \vert\, n \in {\mathbf Z}\}$ would be a finite set as well. Then there would be $m \in {\mathbf Z} \setminus \{0\}$ such that 
$(g_0^m)^* h_1 = h_1$. Since $h_1$ is ample, $g_0^m$ would be of finite order 
by Proposition (\ref{k3})(2), a contradiction to $\langle g_0 \rangle \simeq {\mathbf Z}$ (Lemma (\ref{lem46})(4)). Hence $\vert {\mathcal H} \vert = \infty$.
\end{proof}

\begin{lemma}\label{lem48}
Let $h \in {\mathcal H}$ and $\Phi : S \to {\mathbf P}^3$ be the embedding defined by the complete linear system $\vert h \vert$. We regard $S \subset {\mathbf P}^3$ by this $\Phi$. Let $H$ be the hyperplane class of ${\mathbf P}^3$ and $C$ be an effective curve on $S$ such that $(C.H)_{{\mathbf P}^3} < 16$. Then, there is a hypersurface $T$ in ${\mathbf P}^3$ such that $C = T \vert S$. 
\end{lemma}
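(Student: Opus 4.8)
The plan is to reduce the statement to the purely numerical assertion that $[C]$ is a positive integral multiple of $H$ in ${\rm NS}\,(S)$; once that is known, a standard restriction-sequence argument produces $T$. I would begin by recording two elementary lattice facts. First, every value $(x.y)_S$ with $x,y \in {\rm NS}\,(S)$ is divisible by $4$, which is immediate from the Gram matrix of $h_1,h_2$. Second, $H=h$ is primitive: if $h = kh'$ with $h'$ primitive, then $4 = (h^2)_S = k^2 (h'^2)_S$, and since $(h'^2)_S$ is a nonzero multiple of $4$ (Lemma (\ref{lem44})(1)) this forces $k=1$. Note also that under the embedding $\Phi=\Phi_{\vert h\vert}$ the surface $S$ is a quartic, so $H$ restricts to $h$ and $(C.H)_{{\mathbf P}^3} = (C.h)_S = (C.H)_S$.

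Next I would pin down the class $[C]$. Since $H$ is primitive I can complete it to a basis $\{H, w\}$ of ${\rm NS}\,(S)$, and because $(H.w)_S \in 4{\mathbf Z}$ I may replace $w$ by $w - \tfrac{(H.w)_S}{4}H$ so as to arrange $(H.w)_S = 0$. Computing the discriminant then gives $(w^2)_S = -4(\ell^2-1)$. Writing $[C] = aH + bw$ with $a,b \in {\mathbf Z}$, the hypothesis becomes $(C.H)_{{\mathbf P}^3} = 4a$; since $C$ is a nonzero effective curve and $H$ is ample we have $a \ge 1$, while $(C.H)_{{\mathbf P}^3} < 16$ forces $a \le 3$. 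Thus $a \in \{1,2,3\}$ and $(C^2)_S = 4a^2 - 4(\ell^2-1)b^2$.

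The heart of the argument is to show $b = 0$. Because ${\rm NS}\,(S)$ represents no $-2$ (Lemma (\ref{lem44})(1)), $S$ contains no $(-2)$-curve, so every irreducible curve has non-negative self-intersection and lies in $\overline{P}\,(S)$; hence every effective class, being a sum of such classes lying in one nappe of the positive cone, again lies in $\overline{P}\,(S)$ and in particular satisfies $(C^2)_S \ge 0$. On the other hand, if $b \neq 0$ then $(C^2)_S \le 4\cdot 9 - 4(\ell^2-1) = 40 - 4\ell^2 < 0$ since $\ell > 5$, a contradiction. Therefore $b = 0$ and $[C] = aH$ with $1 \le a \le 3$.

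Finally, $C \in \vert aH \vert$, and from the restriction sequence $0 \to {\mathcal O}_{{\mathbf P}^3}(a-4) \to {\mathcal O}_{{\mathbf P}^3}(a) \to {\mathcal O}_S(aH) \to 0$ together with $H^1({\mathbf P}^3, {\mathcal O}_{{\mathbf P}^3}(a-4)) = 0$, the restriction map $H^0({\mathbf P}^3, {\mathcal O}_{{\mathbf P}^3}(a)) \to H^0(S, {\mathcal O}_S(aH))$ is surjective; lifting the section cutting out $C$ yields a degree-$a$ hypersurface $T$ with $T\vert S = C$. I expect the only genuinely substantive step to be the elimination of $b$: the key point is that the orthogonal complement of $H$ carries the very negative form $-4(\ell^2-1)$, so the degree bound $<16$ (equivalently $a \le 3$) makes any $b\neq 0$ incompatible with effectivity. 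The one subtlety to state with care is the inequality $(C^2)_S \ge 0$ for effective $C$, which rests entirely on the absence of $(-2)$-curves.
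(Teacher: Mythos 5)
Your proof is correct, and it proves the same numerical core as the paper but packages it differently. The paper argues by contradiction as follows: assuming $[C]$ and $h$ are linearly independent, it considers the rank-two sublattice $N := {\mathbf Z}\langle [C], h \rangle \subset {\rm NS}\,(S)$, which then has finite index, so that $\vert \det N \vert = [{\rm NS}\,(S):N]^2 \cdot \vert \det {\rm NS}\,(S) \vert$ is divisible by $\vert \det {\rm NS}\,(S)\vert = 16(\ell^2-1)$; on the other hand, using exactly your two ingredients --- $(C^2)_S \ge 0$ because ${\rm NS}\,(S)$ represents no $-2$, and $\ell > 5$ --- it bounds $0 < \vert \det N \vert = (C.h)_S^2 - (C^2)_S (h^2)_S \le (C.h)_S^2 < 16^2 < 16(\ell^2 -1)$, a contradiction with the divisibility. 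Your version replaces this discriminant-divisibility step by an explicit orthogonalized basis $\{H, w\}$ with $(w^2)_S = -4(\ell^2-1)$ and a sign contradiction $(C^2)_S < 0$ when $b \ne 0$; the two computations are equivalent (indeed $\det N = -16(\ell^2-1)b^2$ in your coordinates), but yours is more elementary in that it needs no index formula for discriminants. Your write-up is also more complete at both ends: you verify the primitivity of $h$ (needed to pass from ``$[C]$ proportional to $h$'' to ``$[C] = aH$ with $a$ a positive integer''), you treat reducible or non-reduced effective curves correctly via convexity of $\overline{P}\,(S)$ (the paper's one-line claim $(c^2)_S \ge 0$ tacitly treats $C$ as irreducible), and you carry out the final lifting of $C$ to a hypersurface $T$ via the restriction sequence and $H^1({\mathbf P}^3, {\mathcal O}_{{\mathbf P}^3}(a-4)) = 0$, a step the paper leaves implicit.
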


\begin{proof} {\it Assuming to the contrary that there would be an effective curve $C \subset S$ such that the class $c := [C]$ is linearly independent to 
$h = H \vert S$ in ${\rm NS}\, (S)$, we shall derive a contradiction.} Then $N := \langle c, h \rangle$ would be a sublattice of ${\rm NS}\, (S)$ of the same rank $2$. In particular, the signature would be of $(1,1)$.
Then  
$$\vert N \vert := \vert {\rm det}\left(\begin{array}{rr}
(c^2)_S & (c.h)_S\\
(c.h)_S & (h^2)_S
\end{array} \right) \vert = (c.h)_S^2 - (c^2)_S \cdot (h^2)_S > 0\,\, ,$$
and $\vert N \vert$ would be divided by $\vert {\rm NS}(S) \vert := \vert {\rm det}(h_i.h_j) \vert$. 
Since $c = [C]$ is an effective class and $C \not\simeq {\mathbf P}^1$ 
by Lemma (\ref{lem44})(1), we have $(c^2)_S \ge 0$. Hence 
$$\vert N \vert \le (c.h)_S^2 < 16^2\,\, .$$
On the other hand, by $\ell > 5$, 
$$\vert {\rm NS}(S) \vert = 16\ell^2 - 16 > 
16 \cdot 17 - 16 = 16^2\,\, ,$$
and therefore $\vert N \vert$ could not be divided by 
$\vert {\rm NS}(S) \vert$, 
a contradiction. 
\end{proof}  

\begin{lemma}\label{lem49}
Let $h \in {\mathcal H}$ and $\Phi : S \to {\mathbf P}^3$ be the embedding defined by the complete linear system $\vert h \vert$. We regard $S \subset {\mathbf P}^3$ by this $\Phi$. Then, no element of 
${\rm Aut}\, (S) \setminus \{id_S \}$ is derived from 
${\rm Bir}\, ({\mathbf P}^3)$. 
\end{lemma}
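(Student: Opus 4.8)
The plan is to invoke Takahashi's theorem (Theorem \ref{takahashi}) applied to the embedding $\Phi$. For this I must first verify that $S \subset {\mathbf P}^3$ satisfies the hypothesis of that theorem, namely that every effective curve $C$ on $S$ with $(C.H)_{{\mathbf P}^3} < 16$ is of the form $C = T \vert S$ for a hypersurface $T$. But this is exactly the content of Lemma \ref{lem48}, which I have just established and which holds for \emph{every} $h \in {\mathcal H}$ precisely because $\ell > 5$ forces $\vert {\rm NS}(S) \vert > 16^2$. So the hypothesis of Takahashi's theorem is met for each embedding $\Phi$ arising from $\vert h \vert$ with $h \in {\mathcal H}$.

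With the hypothesis verified, Takahashi's theorem yields: for every $\tilde{g} \in {\rm Bir}\,({\mathbf P}^3) \setminus {\rm Aut}\,({\mathbf P}^3)$, one has $\tilde{g}_* S \neq S$. The next step is to reconcile this with the definition of ``derived from ${\rm Bir}\,({\mathbf P}^3)$.'' Suppose, for contradiction, that some $g \in {\rm Aut}\,(S) \setminus \{id_S\}$ were derived from ${\rm Bir}\,({\mathbf P}^3)$ via $\Phi$; that is, there is $\tilde{g} \in {\rm Bir}\,({\mathbf P}^3)$ with $\tilde{g}_* S = S$ and $\tilde{g} \vert S = g$. By the conclusion of Takahashi's theorem, any such $\tilde{g}$ with $\tilde{g}_* S = S$ must lie in ${\rm Aut}\,({\mathbf P}^3)$. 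But then $g = \tilde{g} \vert S$ would be derived from ${\rm Aut}\,({\mathbf P}^3) = {\rm PGL}\,(4, {\mathbf C})$.

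It remains to rule out that any nontrivial $g \in {\rm Aut}\,(S)$ is derived from ${\rm Aut}\,({\mathbf P}^3)$. This is where Proposition \ref{k3}(2) enters: the subgroup of ${\rm Aut}\,(S)$ derived from ${\rm Aut}\,({\mathbf P}^3)$ under $\Phi_{\vert h \vert}$ equals the stabilizer $\{g \in {\rm Aut}\,(S) \mid g^* h = h\}$, which is a \emph{finite} group. On the other hand, ${\rm Aut}\,(S) \simeq {\mathbf Z}$ by Lemma \ref{lem46}(4), and by Lemma \ref{lem46}(2) it has no nontrivial elements of finite order. Hence the finite stabilizer subgroup must be trivial, so no $g \neq id_S$ fixes $h$, and therefore no $g \neq id_S$ is derived from ${\rm Aut}\,({\mathbf P}^3)$. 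Combining this with the previous paragraph gives the desired contradiction, completing the proof.

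I expect no single step to be a serious obstacle here, since the hard analytic and lattice-theoretic work has already been absorbed into Lemmas \ref{lem44}--\ref{lem48} and into Takahashi's theorem. The one point requiring care is the logical bookkeeping in the contradiction: one must be precise that ``derived from ${\rm Bir}\,({\mathbf P}^3)$'' with $\tilde{g}_* S = S$ splits into the ${\rm Aut}\,({\mathbf P}^3)$ case (excluded by the finiteness of the stabilizer together with ${\rm Aut}\,(S) \simeq {\mathbf Z}$ being torsion-free) and the genuinely birational case (excluded directly by Takahashi's ampleness conclusion $\tilde{g}_* S \neq S$). Getting this dichotomy stated cleanly is the only subtle part.
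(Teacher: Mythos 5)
Your proof is correct and follows essentially the same route as the paper: the dichotomy $\tilde{g} \in {\rm Aut}\,({\mathbf P}^3)$ versus $\tilde{g} \in {\rm Bir}\,({\mathbf P}^3) \setminus {\rm Aut}\,({\mathbf P}^3)$, with the linear case killed by Proposition (\ref{k3})(2) together with the torsion-freeness of ${\rm Aut}\,(S) \simeq {\mathbf Z}$ from Lemma (\ref{lem46}), and the genuinely birational case killed by Lemma (\ref{lem48}) feeding Theorem (\ref{takahashi}). The only cosmetic difference is that you phrase the linear-case exclusion via ``a finite subgroup of ${\mathbf Z}$ is trivial'' while the paper says the element $g$ with $g^*h = h$ would have finite order; these are the same argument.
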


\begin{proof} {\it Assuming to the contrary that there would be 
$g \in {\rm Aut}\, (S) \setminus \{id_S\}$ such that $g = {\tilde g} \vert S$ 
for some $\tilde{g} \in {\rm Bir}\, ({\mathbf P}^3)$, we shall derive a contradiction.} 

If ${\tilde g} \in {\rm Aut}\, ({\mathbf P}^3)$, then $g^*h = h$. Since $h$ is ample, $g$ would be of finite order by Proposition (\ref{k3})(2), 
a contradiction to Lemma (\ref{lem46})(4). 

If ${\tilde g} \in {\rm Bir}\, ({\mathbf P}^3) \setminus {\rm Aut}\, ({\mathbf P}^3)$, then by Lemma (\ref{lem48}), one can apply Theorem (\ref{takahashi}) for ${\tilde g}$. However, then $g(S) = g_*S \not= S$, a contradiction.
\end{proof}

This completes the proof of Theorem (\ref{main4}). Theorem (\ref{main2}) follows from Theorem (\ref{main4}).

\end{document}